\newcommand{\Fg}{\mathfrak{g}}
\newcommand{\Fh}{\mathfrak{h}}
\newcommand{\Fb}{\mathfrak{b}}
\newcommand{\Fa}{\mathfrak{a}}
\newcommand{\FS}{\mathfrak{S}}
\newcommand{\Fs}{\mathfrak{s}}
\newcommand{\Fu}{\mathfrak{u}}
\newcommand{\Fz}{\mathfrak{z}}
\newcommand{\Ft}{\mathfrak{t}}
\newcommand{\BC}{\mathbb{C}}
\newcommand{\BR}{\mathbb{R}}
\newcommand{\BZ}{\mathbb{Z}}
\newcommand{\BF}{\mathbb{F}}
\newcommand{\CC}{\mathcal{C}}
\newcommand{\Hom}{\mathop{\rm Hom}\nolimits}
\newcommand{\End}{\mathop{\rm End}\nolimits}
\newcommand{\Ind}{\mathop{\rm Ind}\nolimits}
\newcommand{\Aut}{\mathop{\rm Aut}\nolimits}
\newcommand{\Ni}{\mathop{\rm Ni}\nolimits}
\newcommand{\Span}{\mathop{\rm Span}\nolimits}
\newcommand{\rank}{\mathop{\rm rank}\nolimits}
\newcommand{\id}{\mathop{\rm id}\nolimits}
\newcommand{\ad}{\mathop{\rm ad}\nolimits}
\newcommand{\pair}[2]{\langle #1,\,#2 \rangle}
\newcommand{\ve}{\varepsilon}
\newcommand{\vp}{\varphi}
\newcommand{\ha}[1]{\widehat{#1}}
\newcommand{\ti}[1]{\widetilde{#1}}
\newcommand{\ol}[1]{\overline{#1}}
\newcommand{\kc}{\langle \kappa \rangle}
\newcommand{\ten}{\mathrm{X}}
\newcommand{\bzero}{{\bf 0}}
\newcommand{\bk}{{\bf k}}
\newcommand{\bh}{{\bf h}}
\newcommand{\bmu}{{\boldsymbol \mu}}
\newcommand{\bqed}{\quad \hbox{\rule[-0.5pt]{3pt}{8pt}}}
\renewcommand\section{\@startsection{section}{1}{0pt}
{-3.5ex plus -1ex minus -.2ex}{1.0ex plus .2ex}{\large\bf}}
\renewcommand\subsection{\@startsection{subsection}{1}{0pt}
{2.5ex plus 1ex minus .2ex}{-1em}{\bf}}
\theoremstyle{plain}
\newtheorem{thm}{Theorem}[subsection]
\newtheorem{lem}[thm]{Lemma}
\newtheorem{prop}[thm]{Proposition}
\newtheorem{claim}{Claim}[thm]
\newtheorem*{claim*}{Claim}
\theoremstyle{definition}
\theoremstyle{remark}
\newtheorem{rem}[thm]{Remark}
\newtheorem*{rem*}{Remark}
\begin{document}

\setlength{\baselineskip}{18.3pt}

\title{\Large\bf 
Application of a $\BZ_{3}$-orbifold construction \\[1.5mm]
to the lattice vertex operator algebras \\[1.5mm]
associated to Niemeier lattices
}
\author{
 Daisuke Sagaki%
\footnote{D.S. was partially supported by 
Grant-in-Aid for Young Scientists (B) No.\,23740003, Japan.} \\
 \small Institute of Mathematics, University of Tsukuba, \\
 \small Tennodai 1-1-1, Tsukuba, Ibaraki 305-8571, Japan \\
 \small (e-mail: {\tt sagaki@math.tsukuba.ac.jp}) \\[5mm]
 Hiroki Shimakura%
\footnote{H.S. was partially supported by 
Grant-in-Aid for Scientific Research (C) No.\,23540013, Japan.} \\
 \small Graduate School of Information Sciences, Tohoku University, \\
 \small Aramaki aza Aoba 6-3-09, Aoba-ku, Sendai 980-8579, Japan \\
 \small (e-mail: {\tt shimakura@m.tohoku.ac.jp})
}
\date{}
\maketitle

%
\begin{abstract} \setlength{\baselineskip}{16pt}
%
%
By applying Miyamoto's $\BZ_{3}$-orbifold construction 
to the lattice vertex operator algebras associated to 
Niemeier lattices and their automorphisms of order $3$, 
we construct holomorphic vertex operator algebras of central charge $24$ 
whose Lie algebras of the weight one spaces are of types $A_{2,3}^6$, 
$E_{6,3}G_{2,1}^{3}$, and $A_{5,3}D_{4,3}A_{1,1}^{3}$,
which correspond to No.\,6, No.\,17, and No.\,32 on Schellekens' list, 
respectively. 
\end{abstract}
%
%
\section{Introduction.}
\label{sec:intro}
%
%
The classification of holomorphic vertex operator algebras 
(VOAs for short) is a fundamental problem in the VOA theory.
By Zhu's theory (see \cite{Z}), their central charges are 
divisible by $8$. We know from \cite{DM04} that 
a holomorphic VOA of central charge $8$ or $16$ is 
isomorphic to a lattice VOA. Thus the next problem is 
to classify the holomorphic VOAs of central charge $24$. 
In \cite{Sch}, Schellekens gave a list of 
possible $71$ Lie algebra structures of 
the weight one spaces of holomorphic VOAs of central charge $24$ 
(it is mysterious that the number $71$ appears here; for, 
$71$ is the largest prime factor of the order of the Monster simple group). 
So the first step of the classification would be: 
\begin{quote}
For each Lie algebra on Schellekens' list, 
construct a holomorphic VOA of central charge $24$ 
whose Lie algebra of the weight one space is isomorphic to it. 
\end{quote}

It is well-known that the lattice VOA associated to 
a Niemeier lattice (i.e., unimodular, positive-definite, even lattice of rank $24$) 
is a holomorphic VOA of central charge $24$. 
Because there exist exactly $24$ Niemeier lattices 
(see, e.g., \cite[Chapter 16, Table 16.1]{CS}), 
we can obtain $24$ holomorphic VOAs of central charge $24$ 
in this manner. 
Also, we can obtain $15$ holomorphic VOAs of central charge $24$ 
by applying the $\BZ_2$-orbifold construction to the lattice VOAs associated to 
Niemeier lattices and the $(-1)$-isometry (see \cite{FLM,DGM}); 
the Moonshine VOA is included in these 15 holomorphic VOAs. 
In \cite{Lam,LS1,LS2}, 
Lam and the second named author proved 
that there exist exactly $56$ 
holomorphic framed VOAs of central charge $24$, 
including $39\,(=24+15)$ holomorphic VOAs mentioned above. 
According to Schellekens' list, there should 
exist at least $15\,(=71-56)$ non-framed holomorphic VOAs 
of central charge $24$, which should correspond to: 
\begin{equation*}
\begin{array}{c|c|c}
\text{ No. in \cite{Sch} }
& 
\text{ \begin{tabular}{cc} Dimension of \\ the weight one space \end{tabular} }
& 
\text{ \begin{tabular}{cc} Lie algebra structure of \\ 
       the weight one space \end{tabular}} \\ \hline\hline
3 & 36 & D_{4,12}A_{2,6} \\[1mm]
4 & 36 & C_{4,10} \\[1mm]
6 & 48 & A_{2,3}^{6} \\[1mm]
8 & 48 & A_{5,6}C_{2,3}A_{1,2} \\[1mm]
9 & 48 & A_{4,5}^{2} \\[1mm]
11 & 48 & A_{6,7} \\[1mm]
14 & 60 & F_{4,6}A_{2,2} \\[1mm]
17 & 72 & A_{5,3}D_{4,3}A_{1,1}^{3} \\[1mm]
20 & 72 & D_{6,5}A_{1,1}^{2} \\[1mm]
21 & 72 & C_{5,3}G_{2,2}A_{1,1} \\[1mm]
27 & 96 & A_{8,3}A_{2,1}^{2} \\[1mm]
28 & 96 & E_{6,4}C_{2,1}A_{2,1} \\[1mm]
32 & 120 & E_{6,3}G_{2,1}^{3} \\[1mm]
34 & 120 & D_{7,3}A_{3,1}G_{2,1} \\[1mm]
45 & 168 & E_{7,3}A_{5,1}
\end{array}
\end{equation*}
Here, $X_{m,n}$ denotes the simple Lie algebra of type $X_{m}$ 
whose ``level'' is equal to $n$ (for the definition of the level, 
see \S\ref{subsec:wt1}).

In \cite{M}, Miyamoto established a $\BZ_{3}$-orbifold 
construction to a lattice VOA and a lattice automorphism of order $3$
satisfying the condition that the rank of the fixed point lattice 
is divisible by $6$ (see \S\ref{subsec:Z3}). 
Then he constructed a new (non-framed) holomorphic VOA 
of central charge $24$ whose Lie algebra of the weight one space 
is of type $E_{6,3}G_{2,1}^3$, by applying his $\BZ_{3}$-orbifold 
construction to the lattice VOA associated to 
the Niemeier lattice $\Ni(E_{6}^{4})$ (with $E_{6}^{4}$ the root lattice) 
and its automorphism $\sigma_{6}$ of order $3$ 
(see \cite[\S5.2]{M} and also Appendix in this paper); 
this holomorphic VOA corresponds to No.\,32 on Schellekens' list. 
Also, he obtained a holomorphic VOA whose weight one space is 
identical to $\bigl\{0\bigr\}$, by applying his $\BZ_{3}$-orbifold 
construction to the Leech lattice VOA and a fixed-point-free 
automorphism $\sigma_{7}$ of order $3$ of the Leech lattice; 
this holomorphic VOA is conjecturally isomorphic to 
the Moonshine VOA $V^{\natural}$. 

In this paper, we also construct some new holomorphic VOAs 
as a further application of Miyamoto's $\BZ_{3}$-orbifold construction; 
in Theorem~\ref{thm:main} (resp., Theorems~\ref{thm:main2}, \ref{thm:main3}, 
\ref{thm:main4}, \ref{thm:main5}), we obtain a holomorphic VOA 
whose Lie algebra of the weight one space is of type $A_{2,3}^{6}$ 
(resp., $A_{2,3}^{6}$, $E_{6,3}G_{2,1}^{3}$, $A_{5,3}D_{4,3}A_{1,1}^{3}$, 
$A_{5,3}D_{4,3}A_{1,1}^{3}$), by applying the $\BZ_{3}$-orbifold construction 
to the lattice VOA associated to the Niemeier lattice $\Ni(A_{2}^{12})$ 
(resp., $\Ni(D_{4}^{6})$, $\Ni(D_{4}^{6})$, $\Ni(D_{4}^{6})$, 
$\Ni(A_{5}^{4}D_{4})$) and its automorphism $\sigma_{1}$ (resp., 
$\sigma_{2}$, $\sigma_{3}$, $\sigma_{4}$, $\sigma_{5}$) of order $3$, 
which corresponds to No.\,6 (resp., No.\,6, No.\,32, No.\,17, No.\,17) 
on Schellekens' list. 

\begin{rem*}
The Lie algebras of the weight one spaces in 
the holomorphic VOAs obtained in Theorems~\ref{thm:main} and \ref{thm:main2}
(resp., Theorem~\ref{thm:main3} and \cite[\S5.2]{M}, 
Theorems~\ref{thm:main4} and ~\ref{thm:main5}) are isomorphic. 
However, we do not know whether or not 
these holomorphic VOAs are isomorphic (as VOAs). 
\end{rem*}

In \cite{ISS}, we will classify, up to conjugation, 
all lattice automorphisms of order $3$ of Niemeier lattices, 
satisfying the condition that 
the ranks of the fixed point lattices are divisible by $6$ 
(i.e., those to which we can apply the $\BZ_{3}$-orbifold construction), 
and prove that if such a lattice automorphism of a Niemeier lattice 
is not conjugate to any of the $\sigma_{1}$ $\sigma_{2}$, $\sigma_{3}$, 
$\sigma_{4}$, $\sigma_{5}$, $\sigma_{6}$, $\sigma_{7}$ above, then 
the holomorphic VOA obtained by the $\BZ_{3}$-orbifold construction is 
isomorphic to the lattice VOA associated to a Niemeier lattice.
In other word, a non-framed holomorphic VOA of central charge $24$ 
which can be obtained by the $\BZ_{3}$-orbifold construction is 
one of those obtained in this paper and \cite{M}. 

\paragraph{Acknowledgments.} 
The authors thank Professor Masahiko Miyamoto 
for variable comments and useful discussions.

%
\section{Review.}
\label{sec:review}

%
\subsection{Lattice VOAs and their automorphisms.}
\label{subsec:lattice}

In this subsection, we review the definition of 
a lattice vertex operator algebra (VOA for short); 
for the details, see, e.g., \cite[\S6.4 and \S6.5]{LL}. 

Let $L$ be a positive-definite, even lattice
with $\BZ$-bilinear form $\pair{\cdot\,}{\cdot}$.
Regard $\Fh:=L \otimes_{\BZ} \BC$ as an abelian Lie algebra, and 
define its affinization to be the Lie algebra 
$\ha{\Fh}:=\Fh \otimes \BC[t,\,t^{-1}] \oplus \BC\bk$ 
with Lie bracket given by: 
\begin{equation*}
[x \otimes t^{m},\,y \otimes t^{n}] 
= \delta_{m+n,\,0} m\pair{x}{y} \bk \quad 
\text{for $x,\,y \in \Fh$ and $m,\,n \in \BZ$},
\end{equation*}
\begin{equation*}
[\ha{\Fh},\,\bk]=\{0\};
\end{equation*}
for simplicity of notation, we denote $h \otimes t^{m}$ by $h(m)$ 
for $h \in \Fh$ and $m \in \BZ$. 
The Lie subalgebra $\ha{\Fb}:=
\Fh \otimes \BC[t] \oplus \BC\bk \subset \ha{\Fh}$ 
acts on the one-dimensional vector space $\BC$ 
as follows: for $c \in \BC$, 
\begin{equation*}
h(m) \cdot c = 0 \quad 
\text{for all $h \in \Fh$ and $m \in \BZ_{\ge 0}$}, \qquad
\bk \cdot c = c.
\end{equation*}
Then we define
\begin{equation*}
M(1):=\Ind^{\ha{\Fh}}_{\ha{\Fb}}\BC. 
\end{equation*}

Fix a positive even integer $s \in 2\BZ_{> 0}$. 
Let us define $c_{0} : L \times L \rightarrow \BZ/s\BZ$ by:
\begin{equation*}
c_{0}(\alpha,\,\beta)=\frac{s}{2} \pair{\alpha}{\beta}+s\BZ,
\end{equation*}
which is an alternating $\BZ$-bilinear map.
Let $\ve_{0} : L \times L \rightarrow \BZ/s\BZ$ 
be a $2$-cocycle corresponding to 
$c_{0} : L \times L \rightarrow \BZ/s\BZ$, 
normalized as: 
$\ve_{0}(\alpha,\,0)=
\ve_{0}(0,\,\alpha)=0$ for all $\alpha \in L$.
Let $\kc$ be the cyclic group of order $s$. 
We define a product on 
\begin{equation*}
\ha{L} := \bigl\{(\kappa^{a},\,e_{\alpha}) \mid 
a \in \BZ/s\BZ,\,\alpha \in L\bigr\}
\end{equation*}
as follows: 
for $a,\,b \in \BZ/s\BZ$ and $\alpha,\,\beta \in L$, 
\begin{equation*}
(\kappa^{a},\,e_{\alpha}) \cdot
(\kappa^{b},\,e_{\beta}):=
(\kappa^{a+b+\ve_{0}(\alpha,\,\beta)},\,e_{\alpha+\beta}).
\end{equation*}
Then, $\ha{L}$ is a group with $(\kappa^{0},\,e_{0})$ 
the identity element, and is 
the central extension of $L$ by the cyclic group $\kc$ of order $s$ 
with $c_{0}:L \times L \rightarrow \BZ/s\BZ$ the commutator map. 
The cyclic group $\kc$ acts on the one-dimensional space $\BC$ by: 
$\kappa \cdot c = \xi c$ for $c \in \BC$, 
where $\xi \in \BC$ is a primitive $s$-th root of unity. 
Define 
\begin{equation*}
\BC\{L\} := \BC[\ha{L}] \otimes_{\kc} \BC, 
\end{equation*}
where $\BC[\ha{L}]$ denotes the group ring of the group $\ha{L}$; 
remark that
$\bigl\{e^{\alpha}:=(\kappa^{0},\,e_{\alpha}) \otimes 1 \mid 
\alpha \in L\bigr\}$ is a basis of $\BC\{L\}$. 

Now, set
\begin{equation*}
V_{L}:=M(1) \otimes \BC\{L\}. 
\end{equation*}
Then, $V_{L}$ admits a VOA structure whose central charge is equal to 
the rank of the lattice $L$ (which is independent of 
the choices of $s$, $\ve_{0}$, and $\xi$). 
Recall that the weight of 
$h_{k}(-n_{k}) \cdots h_{1}(-n_{1}) 1 \otimes e^{\alpha} 
\in V_{L}=M(1) \otimes \BC\{L\}$, where 
$h_{1},\,\dots,\,h_{k} \in \Fh$, 
$n_{1},\,\dots,\,n_{k} \in \BZ_{ > 0}$, and $\alpha \in L$, 
is given by: 
\begin{equation*}
n_{k}+\cdots+n_{1}+\frac{\pair{\alpha}{\alpha}}{2} \in \BZ_{\ge 0}.
\end{equation*}
In particular, the weight one space $(V_{L})_{1}$ of $V_{L}$ 
is spanned by
\begin{equation*}
\bigl\{h(-1)1 \otimes e^{0} \mid h \in \Fh\bigr\} 
\cup \bigl\{1 \otimes e^{\alpha} \mid \alpha \in \Delta\bigr\},
\end{equation*}
where $\Delta=\Delta(L):=
\bigl\{\alpha \in L \mid \pair{\alpha}{\alpha}=2\bigr\}$, 
the set of roots in $L$. 
Denote by 
\begin{equation*}
Y(\cdot\,,\,z):V_{L} \rightarrow (\End_{\BC}V_{L})[[z,\,z^{-1}]], \quad
a \mapsto Y(a,\,z)=\sum_{n \in \BZ} a_{n} z^{-n-1}
\end{equation*}
the vertex operator for $V_{L}$. 
For latter use, let us recall the definition of 
$Y(a,\,z)$ for some special $a \in V_{L}$. 
First, the Lie algebra $\ha{\Fh}$ acts on 
$V_{L}=M(1) \otimes \BC\{L\}$ as follows: 
$\bk \in \ha{\Fh}$ acts as the identity, 
and for $h \in \Fh$ and $n \in \BZ$, 
\begin{equation*}
h(n)(u \otimes e^{\beta})=
\begin{cases}
\pair{h}{\beta}(u \otimes e^{\beta}) & \text{if $n=0$}, \\[1.5mm]
\bigl(h(n)u\bigr) \otimes e^{\beta} & \text{if $n \ne 0$},
\end{cases}
\quad \text{for $u \in M(1)$ and $\beta \in L$}.
\end{equation*}
Also, for $\alpha \in L$, we define 
$z^{\alpha} \in \Hom_{\BC}(V_{L},\,V_{L}[z,\,z^{-1}])$ by
\begin{equation*}
z^{\alpha}(u \otimes e^{\beta}):=
  z^{\pair{\alpha}{\beta}}(u \otimes e^{\beta})
\quad \text{for $u \in M(1)$ and $\beta \in L$}.
\end{equation*}
In addition, the group $\ha{L}$ acts on 
$V_{L}=M(1) \otimes \BC\{L\}$ as follows: 
\begin{equation*}
g \cdot (u \otimes v) = u \otimes (g \cdot v) \quad 
\text{for $g \in \ha{L}$ and $u \in M(1)$, $v \in \BC\{L\}$}, 
\end{equation*}
where the $g \cdot v$ above is given by the natural 
action of $\ha{L}$ on $\BC\{L\}$. In particular, 
\begin{equation*}
(\kappa^{0},\,e_{\alpha}) \cdot (u \otimes e^{\beta}):=
u \otimes \bigl(\xi^{\ve_{0}(\alpha,\,\beta)}e^{\alpha+\beta}\bigr)
\quad \text{for $\alpha,\,\beta \in L$ and $u \in M(1)$}.
\end{equation*}
We have
%
%
\begin{equation} \label{eq:Yh}
Y(h(-1)1 \otimes e^{0},\,z)= \sum_{n \in \BZ} h(n) z^{-n-1}
  \quad \text{for $h \in \Fh$},
\end{equation}
%
%
\begin{equation} \label{eq:Ye}
Y(1 \otimes e^{\alpha},\,z)=
E^{-}(-\alpha,\,z)E^{+}(-\alpha,\,z)
\underbrace{(\kappa^{0},\,e_{\alpha})}_{\in \ha{L}}z^{\alpha} 
\quad \text{for $\alpha \in L$},
\end{equation}
where 
\begin{equation*}
E^{\pm}(-\alpha,\,z):=
\exp\left(\sum_{n \in \pm \BZ_{> 0}} \frac{-\alpha(n)}{n}z^{-n}\right).
\end{equation*}

%
\subsection{Twisted modules over lattice VOAs.}
\label{subsec:twisted}

Keep the notation in \S\ref{subsec:lattice}. 
An automorphism of the lattice $L$ is, by definition, 
a $\BZ$-module automorphism $\sigma$ of $L$ satisfying 
the condition that $\pair{\sigma\alpha}{\sigma\beta}=
\pair{\alpha}{\beta}$ for all $\alpha,\,\beta \in L$. 
Denote by $\Aut(L)$ the group of all lattice automorphisms of $L$. 
Let $\sigma \in \Aut(L)$ be of odd order, and set $s:=2|\sigma|$ 
(with notation in \S\ref{subsec:lattice}), 
where $|\sigma|$ denotes the order $\sigma$. 
Replacing 
$\ve_{0}:L \times L \rightarrow \BZ/s\BZ$ with 
\begin{equation*}
(\alpha,\,\beta) \mapsto 
\sum_{r=0}^{|\sigma|-1} 
\ve_{0}(\sigma^{r}\alpha,\,\sigma^{r}\beta)
\qquad \text{for $\alpha,\,\beta \in L$}
\end{equation*}
if necessary, we may assume that $\ve_{0}$ 
is $\sigma$-invariant.
Then we deduce that 
the lattice automorphism $\sigma \in \Aut(L)$ 
naturally induces a VOA automorphism of $V_{L}$; 
by abuse of notation, we denote this VOA automorphism 
also by $\sigma \in \Aut(V_{L})$. Remark that 
\begin{equation*}
\sigma(h_{k}(-n_{k}) \cdots h_{1}(-n_{1})1 \otimes e^{\alpha})
=(\sigma h_{k})(-n_{k}) \cdots (\sigma h_{1})(-n_{1})1 \otimes 
 e^{\sigma\alpha}.
\end{equation*}

Now, we recall a construction of $\sigma$-twisted modules over 
the lattice VOA $V_{L}$ from \cite{DL} and \cite{L} 
in the case that $\sigma$ is of odd order $p$; 
in fact, a lattice automorphism 
mainly treated in this paper is of order $3$. 
Set $s:=2|\sigma|=2p$ as above, and set $\zeta:=\xi^{2} \in \BC$, 
which is a primitive $p$-th root of unity (recall that 
$\xi$ is a primitive $s$-th root of unity). 

For $n \in \BZ$, define $\Fh_{(n)}:=
\bigl\{h \in \Fh \mid \sigma h=\zeta^{n}h\bigr\}$; 
note that $\Fh_{(n)}=\Fh_{(n+pk)}$ for all $n,\,k \in \BZ$, 
and $\Fh=\Fh_{(0)} \oplus \Fh_{(1)} \oplus \cdots \oplus \Fh_{(p-1)}$. 
Let us define the $\sigma$-twisted affine Lie algebra 
associated to the abelian Lie algebra $\Fh$ to be 
\begin{equation*}
\ha{\Fh}[\sigma] := \bigoplus_{n \in (1/p)\BZ} 
\Fh_{(pn)} \otimes \BC t^{n} \oplus \BC\bk
\end{equation*}
with Lie bracket 
\begin{equation*}
[x \otimes t^{m},\,y \otimes t^{n}] 
= \delta_{m+n,\,0} m \pair{x}{y} \bk \quad
\text{for $m,\,n \in (1/p)\BZ$ and 
$x \in \Fh_{(pm)}$, $y \in \Fh_{(pn)}$},
\end{equation*}
\begin{equation*}
\bigl[ \ha{\Fh}[\sigma],\,\bk \bigr]=\{0\}; 
\end{equation*}
for simplicity of notation, we denote $h \otimes t^{m}$ by $h(m)$ 
for $m \in (1/p)\BZ$ and $h \in \Fh_{(pm)}$. 
The Lie subalgebra 
\begin{equation*}
\ha{\Fb}[\sigma]:=
\bigoplus_{n \in (1/p)\BZ_{\ge 0}} 
\Fh_{(pn)} \otimes \BC t^{n} \oplus \BC\bk \subset \ha{\Fh}[\sigma]
\end{equation*}
acts on the one-dimensional vector space $\BC$ 
as follows: for $c \in \BC$, 
\begin{equation*}
h(m) \cdot c = 0 \quad
\text{for all $m \in (1/p)\BZ_{\ge 0}$ and $h \in \Fh_{(pm)}$}, \qquad
\bk \cdot c = c.
\end{equation*}
Then we define
\begin{equation*}
M(1)[\sigma]:=
\Ind^{\ha{\Fh}[\sigma]}_{\ha{\Fb}[\sigma]}\BC. 
\end{equation*}

Define an alternating $\BZ$-bilinear map 
$c_{0}^{\sigma} : L \times L \rightarrow \BZ/s\BZ$ by:
%
%
\begin{equation} \label{eq:c0s}
c_{0}^{\sigma}(\alpha,\,\beta)=\sum_{r=0}^{p-1} 
(s/2+sr/p) \pair{\sigma^{r}\alpha}{\beta}+s\BZ, 
\end{equation}
and then define $\ve_{0}^{\sigma} : L \times L \rightarrow \BZ/s\BZ$ 
by (see \cite[Remarks~2.1 and 2.2]{DL})
\begin{equation*}
\ve_{0}^{\sigma}(\alpha,\,\beta):=
 \ve_{0}(\alpha,\,\beta) +
\sum_{0 < r < p/2}
(s/2+sr/p) \pair{\sigma^{r}\alpha}{\beta}+s\BZ
\quad \text{for $\alpha,\,\beta \in L$}.
\end{equation*}
It can be easily checked that 
$\ve_{0}^{\sigma}$ is a $\sigma$-invariant, normalized $2$-cocycle 
corresponding to $c_{0}^{\sigma}$. 
We define another product $\ast$ on 
$\ha{L}=\bigl\{(\kappa^{a},\,e_{\alpha}) \mid 
a \in \BZ/s\BZ, \, \alpha \in L\bigr\}$ as follows: 
for $a,\,b \in \BZ/s\BZ$ and $\alpha,\,\beta \in L$, 
\begin{equation*}
(\kappa^{a},\,e_{\alpha}) \ast
(\kappa^{b},\,e_{\beta}):=
(\kappa^{a+b+\ve_{0}^{\sigma}(\alpha,\,\beta)},\,e_{\alpha+\beta}). 
\end{equation*}
Then, $(\ha{L},\,\ast)$ is a group with 
$(\kappa^{0},\,e_{0})$ the identity element; 
we denote this group by $\ha{L}_{\sigma}$. 
It can be easily seen that $\ha{L}_{\sigma}$ is 
the central extension of $L$ by the cyclic group $\kc$ of 
order $s$ with $c_{0}^{\sigma}:L \times L \rightarrow \BZ/s\BZ$ 
the commutator map. 
Because $\ve_{0}^{\sigma}$ is $\sigma$-invariant, 
the lattice automorphism $\sigma \in \Aut(L)$ induces a group automorphism 
$\sigma \in \Aut(\ha{L}_{\sigma})$ defined by: 
$\sigma(\kappa^{a},\,e_{\alpha})=(\kappa^{a},\,e_{\sigma\alpha})$ 
for $a \in \BZ/s\BZ$ and $\alpha \in L$. 

For $h \in \Fh$ and $n \in \BZ$, 
denote by $h_{(n)}$ the image of $h$ 
under the projection $\Fh \twoheadrightarrow \Fh_{(n)}$; 
note that $h_{(0)}=(1/p)\sum_{r=0}^{p-1}\sigma^{r}h$ for $h \in \Fh$.
Set 
%
%
\begin{equation} \label{eq:N}
N:=\bigl\{\alpha \in L \mid \pair{\alpha}{\Fh_{(0)}}=\{0\}\bigr\}=
\bigl\{ \alpha \in L \mid \alpha_{(0)}=0\bigr\};
\end{equation}
the (second) equality follows from the fact that
$\pair{\sigma^{r}\alpha}{h}=\pair{\alpha}{h}$
for all $r \in \BZ$ and all $\alpha \in L$ and $h \in \Fh_{(0)}$, and 
the fact that $\pair{\cdot\,}{\cdot}$ is 
nondegenerate on $\Fh_{(0)}$. Set 
%
%
\begin{equation} \label{eq:RM}
R:=\bigl\{\alpha \in N \mid 
c_{0}^{\sigma}(\alpha,\,N)=0\bigr\}, \quad 
M:=(1-\sigma)L.
\end{equation}
%
%
Then, $M \subset R \subset N$. Also, because 
%
%
\begin{equation} \label{eq:MN}
M \otimes_{\BZ} \BC = 
 (1-\sigma) \underbrace{(L \otimes_{\BZ} \BC)}_{=\Fh} = 
 \Fh_{(1)} \oplus \Fh_{(2)}=
 \bigl\{h \in \Fh \mid \pair{h}{\Fh_{(0)}}=\{0\}\bigr\}=N \otimes_{\BZ} \BC,
\end{equation}
it follows immediately that $N/M$ is a finite group, 
and hence so is $N/R$. 

Now, for a sublattice $Q$ of $L$, we set
$\ha{Q}_{\sigma}:=
\bigl\{(\kappa^{a},\,e_{\alpha}) \mid a \in \BZ/s\BZ,\,\alpha \in Q\bigr\}$ 
(which is a subgroup of $\ha{L}_{\sigma}$). 
Then, $\ha{M}_{\sigma} \subset \ha{R}_{\sigma} \subset 
\ha{N}_{\sigma}$. 
It is known from \cite[Propositions~6.1 and 6.2]{L} 
(see also \cite[Remark~4.2]{DL}) that 
there exists a finite-dimensional irreducible $\ha{N}_{\sigma}$-module $T$ 
of dimension $|N/R|^{1/2}$ on which $\ha{M}_{\sigma}$ acts according 
to a group homomorphism $\tau:\ha{M}_{\sigma} \rightarrow 
\langle \xi \rangle \subset \BC^{\times}$, 
i.e., $g \cdot t = \tau(g)t$ for $g \in \ha{M}_{\sigma}$ and $t \in T$. 
Set 
\begin{equation*}
U_{T}:=\Ind^{\ha{L}_{\sigma}}_{\ha{N}_{\sigma}} T.
\end{equation*}
%
%
\begin{rem}[{see \cite[Section 7]{L}}] \label{rem:NT}
Since $\tau(\ha{M}_{\sigma}) \subset \langle \xi \rangle$, 
it follows immediately that $\ha{M}_{\sigma}/\ker \tau$ is 
a finite group. Also, since $N/M$ is a finite group as seen above, 
so is $\ha{N}_{\sigma}/\ha{M}_{\sigma}$. Thus, 
$\ha{N}_{\sigma}/\ker \tau$ is a finite group. 
Since every element in $\ker \tau$ 
acts on $T$ as the identity, we conclude that 
each $g \in \ha{N}_{\sigma}$ acts on $T$ as a linear automorphism 
of finite order. In particular, the action of 
each $g \in \ha{N}_{\sigma}$ on $T$ is semisimple. 
\end{rem}

Now, we define
\begin{equation*}
V_{L}^{T}:=M(1)[\sigma] \otimes U_{T}.
\end{equation*}
Then we know from \cite[Theorem~7.1]{DL} that 
$V_{L}^{T}$ admits a $\sigma$-twisted $V_{L}$-module structure. 
Note that $V_{L}^{T}=M(1)[\sigma] \otimes U_{T}$ is spanned by the elements 
of the form: $h_{k}(-n_{k}) \cdots h_{1}(-n_{1}) 1 \otimes (g \cdot t)$ 
with $n_{1},\,\dots,\,n_{k} \in (1/p)\BZ_{> 0}$, 
$h_{1} \in \Fh_{(-pn_{1})},\,\dots,\,h_{k} \in \Fh_{(-pn_{k})}$, 
and $g \in \ha{L}_{\sigma}$, $t \in T$. 
The weight of an element of the form above is 
given by 
%
%
\begin{equation} \label{eq:wt}
n_{k}+\cdots+n_{1}+\rho+
\frac{1}{2}\pair{ \ol{g}_{(0)} }{ \ol{g}_{(0)} } \in \rho + (1/p)\BZ_{\ge 0},
\end{equation}
where 
%
%
\begin{equation} \label{eq:def-rho}
\rho:=
\frac{1}{4p^{2}}\sum_{r=1}^{p-1} r(p-r) \dim \Fh_{(r)},
\end{equation}
and where for $g=(\kappa^{a},\,e_{\alpha}) \in \ha{L}_{\sigma}$, 
we set $\ol{g}:=\alpha \in L$; notice that for $g \in \ha{L}_{\sigma}$, 
\begin{equation*}
\pair{\ol{g}_{(0)}}{\ol{g}_{(0)}}=0 \iff
\ol{g}_{(0)}=0 \iff \ol{g} \in N \iff g \in \ha{N}_{\sigma},
\end{equation*}
and hence 
%
%
\begin{equation} \label{eq:top}
(V_{L}^{T})_{\rho} = \BC 1 \otimes T \quad 
\text{with} \quad \dim (V_{L}^{T})_{\rho} = |N/R|^{1/2}.
\end{equation}
Denote by 
\begin{equation*}
Y_{\sigma}(\cdot\,,\,z):
 V_{L} \rightarrow 
 (\End_{\BC} V_{L}^{T})[[z^{1/p},\,z^{-1/p}]], \quad 
a \mapsto Y_{\sigma}(a,\,z)=\sum_{n \in (1/p)\BZ} a_{n} z^{-n-1}
\end{equation*}
the $\sigma$-twisted vertex operator for $V_{L}^{T}$. 
For latter use, let us recall the definition of 
$Y_{\sigma}(a,\,z)$ for some special $a \in V_{L}$. 
First, the Lie algebra $\ha{\Fh}[\sigma]$ acts on 
$V_{L}^{T}=M(1)[\sigma] \otimes U_{T}$ as follows: 
$\bk$ acts as the identity, and 
for $n \in (1/p)\BZ$ and $h \in \Fh_{(pn)}$, 
%
%
\begin{equation} \label{eq:hn}
h(n)(u \otimes (g \cdot t))=
\begin{cases}
\pair{h}{\ol{g}_{(0)}} (u \otimes (g \cdot t)) & \text{if $n=0$}, \\[1.5mm]
\bigl(h(n)u\bigr) \otimes (g \cdot t) & \text{if $n \ne 0$}
\end{cases}
\end{equation}
for $u \in M(1)[\sigma]$ and $g \in \ha{L}_{\sigma}$, $t \in T$.
For $\alpha \in L$, define 
$z^{\alpha_{(0)}} \in 
\Hom_{\BC}(V_{L}^{T},\,V_{L}^{T}[z^{1/p},\,z^{-1/p}])$ by
\begin{equation*}
z^{\alpha_{(0)}}(u \otimes (g \cdot t)):=
  z^{ \pair{\alpha_{(0)}}{\ol{g}_{(0)}} }(u \otimes (g \cdot t))
\quad \text{for $u \in M(1)[\sigma]$ and $g \in \ha{L}_{\sigma}$, $t \in T$}.
\end{equation*}
In addition, the group $\ha{L}_{\sigma}$ 
acts on $V_{L}^{T}$ as follows: 
\begin{equation*}
g \cdot (u \otimes w) = u \otimes (g \cdot w) \quad 
\text{for $x \in \ha{L}_{\sigma}$ and $u \in M(1)[\sigma]$, $w \in U_{T}$}. 
\end{equation*}
Now, we deduce from \cite[(4.40) and (4.45)]{DL} that 
%
%
\begin{equation} \label{eq:Ysh}
Y_{\sigma}(h(-1)1 \otimes e^{0},\,z) = 
\sum_{n \in (1/p)\BZ} h_{(pn)}(n) z^{-n-1};
\end{equation}
observe that $\varDelta_{z}(h(-1)1 \otimes e^{0})=0$, 
where $\varDelta_{z}$ is defined as \cite[(4.42)]{DL}. 
Also, we know from 
\cite[(4.34) and (4.39)]{DL} that for $\alpha \in L$, 
the vertex operator $Y_{\sigma}(1 \otimes e^{\alpha},\,z)$ 
is equal, up to a specified constant multiple 
(which depends only on $\alpha$), to 
%
%
\begin{equation} \label{eq:Yse}
E^{-}_{\sigma}(-\alpha,\,z)E^{+}_{\sigma}(-\alpha,\,z)
\underbrace{(\kappa^{0},\,e_{\alpha})}_{\in \ha{L}_{\sigma}}
z^{ \alpha_{(0)}+
\{\pair{\alpha_{(0)}}{\alpha_{(0)}}-
\pair{\alpha}{\alpha}\}/2 }, 
\end{equation}
where
\begin{equation*}
E^{\pm}_{\sigma}(-\alpha,\,z):=
\exp\left(\sum_{n \in \pm (1/p)\BZ_{> 0}} 
\frac{-\alpha_{(pn)}(n)}{n}z^{-n}\right).
\end{equation*}

Recall that for every $a \in (V_{L})_{1}$, the $0$-th operator $a_{0} 
\in \End_{\BC}(V_{L}^{T})$ (i.e., the coefficient of $z^{-1}$ in 
$Y_{\sigma}(a,\,z)$) is weight-preserving. In particular, 
the top weight space $(V_{L}^{T})_{\rho}$ is stable under 
the action of $a_{0}$. 

%
\begin{lem} \label{lem:Ysh}
{\rm (1)} For every $h \in \Fh$, 
the $0$-th operator 
$(h(-1)1 \otimes e^{0})_{0} \in \End_{\BC}(V_{L}^{T})$ of 
$h(-1)1 \otimes e^{0} \in (V_{L})_{1}$ acts on 
the top weight space $(V_{L}^{T})_{\rho}$ trivially. 

{\rm (2)} If $\alpha \in \Delta \setminus N$, then 
the $0$-th operator $(1 \otimes e^{\alpha})_{0} 
\in \End_{\BC}(V_{L}^{T})$ of 
$1 \otimes e^{\alpha} \in (V_{L})_{1}$
acts on the top weight space $(V_{L}^{T})_{\rho}$ trivially. 

{\rm (3)} If $\alpha \in \Delta \cap N$, then 
the action of the $0$-th operator $(1 \otimes e^{\alpha})_{0} 
\in \End_{\BC}(V_{L}^{T})$ of 
$1 \otimes e^{\alpha} \in (V_{L})_{1}$ on $(V_{L}^{T})_{\rho}$ 
is semisimple. 
\end{lem}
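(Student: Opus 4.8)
The plan is to extract, in each case, the coefficient of $z^{-1}$ from the explicit twisted vertex operators recalled in \eqref{eq:Ysh} and \eqref{eq:Yse}, and to evaluate it on the top weight space $(V_{L}^{T})_{\rho}=\BC 1\otimes T$ from \eqref{eq:top}. The structural fact I would use throughout is that a vector $1\otimes t$ with $t\in T$ is of the form $1\otimes(g\cdot t)$ with $g$ the identity of $\ha{L}_{\sigma}$, so that $\ol{g}_{(0)}=0$; more generally, applying a group element $(\kappa^{0},e_{\alpha})$ sends $1\otimes t$ to $1\otimes((\kappa^{0},e_{\alpha})\cdot t)$, whose associated lattice vector has $(0)$-component $\alpha_{(0)}$. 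For part (1), \eqref{eq:Ysh} shows that $(h(-1)1\otimes e^{0})_{0}$ is exactly the $n=0$ term $h_{(0)}(0)$; applying \eqref{eq:hn} with $n=0$ to $1\otimes t$ gives the scalar $\pair{h_{(0)}}{\ol{g}_{(0)}}=\pair{h_{(0)}}{0}=0$, so this operator annihilates $(V_{L}^{T})_{\rho}$.

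For parts (2) and (3) I would analyze \eqref{eq:Yse} acting on $1\otimes t$. First I would note that $E^{+}_{\sigma}(-\alpha,z)$ acts as the identity on every vector of the form $1\otimes w$, since its modes $\alpha_{(pn)}(n)$ with $n>0$ annihilate the vacuum $1\in M(1)[\sigma]$ by \eqref{eq:hn}, while $E^{-}_{\sigma}(-\alpha,z)$ is a power series in \emph{non-negative} powers of $z$ with constant term $\id$. Next, the rightmost factor $z^{\alpha_{(0)}+\{\pair{\alpha_{(0)}}{\alpha_{(0)}}-\pair{\alpha}{\alpha}\}/2}$ acts on $1\otimes t$ as the scalar power $z^{c(\alpha)}$, where $c(\alpha):=\{\pair{\alpha_{(0)}}{\alpha_{(0)}}-\pair{\alpha}{\alpha}\}/2$, because $\ol{g}_{(0)}=0$ kills the $z^{\alpha_{(0)}}$ contribution. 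Writing $E^{-}_{\sigma}(-\alpha,z)=\sum_{m\in(1/p)\BZ_{\ge 0}}A_{m}z^{m}$ with $A_{0}=\id$, the result is a constant multiple of $\sum_{m\ge 0}z^{c(\alpha)+m}A_{m}\bigl(1\otimes((\kappa^{0},e_{\alpha})\cdot t)\bigr)$, so the $0$-th operator is governed by the solvability of $c(\alpha)+m=-1$ with $m\ge 0$.

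The case analysis then finishes both parts. For $\alpha\in\Delta\setminus N$ we have $\alpha_{(0)}\ne 0$; since $\alpha_{(0)}=(1/p)\sum_{r=0}^{p-1}\sigma^{r}\alpha$ lies in the real span $L\otimes_{\BZ}\BR$, on which the form is positive-definite, we get $\pair{\alpha_{(0)}}{\alpha_{(0)}}>0$, so $c(\alpha)+m=-1$ forces $m=-\tfrac12\pair{\alpha_{(0)}}{\alpha_{(0)}}<0$; as no such $m\ge 0$ exists, the coefficient of $z^{-1}$ vanishes and $(1\otimes e^{\alpha})_{0}$ annihilates $(V_{L}^{T})_{\rho}$, proving (2). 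For $\alpha\in\Delta\cap N$ we have $\alpha_{(0)}=0$ and $\pair{\alpha}{\alpha}=2$, so $c(\alpha)=-1$ and only $m=0$ contributes; thus $(1\otimes e^{\alpha})_{0}$ acts on $1\otimes T$ as a constant multiple of the group element $(\kappa^{0},e_{\alpha})\in\ha{N}_{\sigma}$ (note $\alpha\in N$), whose action on $T$ is semisimple by Remark~\ref{rem:NT}; hence $(1\otimes e^{\alpha})_{0}$ is semisimple, proving (3). I expect the main obstacle to be the careful bookkeeping in \eqref{eq:Yse}: confirming that $E^{+}_{\sigma}$ acts trivially while $E^{-}_{\sigma}$ contributes only non-negative $z$-powers, and pinning down the exponent $c(\alpha)$ precisely, since the entire dichotomy between (2) and (3) rests on whether $c(\alpha)=-1$ admits the solution $m=0$ or demands $m<0$.
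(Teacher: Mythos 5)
Your proposal is correct and follows essentially the same route as the paper: part (1) via $(h(-1)1\otimes e^{0})_{0}=h_{(0)}(0)$ and \eqref{eq:hn}, and parts (2)--(3) by extracting the coefficient of $z^{-1}$ from \eqref{eq:Yse} on $1\otimes t$, where the paper's exponent $-1+d$ with $d=\pair{\alpha_{(0)}}{\alpha_{(0)}}/2$ is exactly your $c(\alpha)$, and the semisimplicity in (3) is likewise reduced to Remark~\ref{rem:NT}. The only cosmetic difference is that you write out the expansion $E^{-}_{\sigma}=\sum_{m\ge 0}A_{m}z^{m}$ explicitly where the paper says ``(higher terms)''.
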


\begin{proof}
(1) By \eqref{eq:Ysh}, we have 
$(h(-1)1 \otimes e^{0})_{0}=h_{(0)}(0)$. 
Because $(V_{L}^{T})_{\rho}=\BC 1 \otimes T$ by \eqref{eq:top}, 
it follows immediately from \eqref{eq:hn} that 
$(h(-1)1 \otimes e^{0})_{0}=h_{(0)}(0)$ 
acts on the top weight space $(V_{L}^{T})_{\rho}$ trivially. 

(2), (3) Let $\alpha \in \Delta$, and 
let $1 \otimes t \in (V_{L}^{T})_{\rho}=\BC 1 \otimes T$. 
Set $v:=(\kappa^{0},\,e_{\alpha}) \cdot t \in U_{T}$, and 
$d:=\pair{\alpha_{(0)}}{\alpha_{(0)}}/2 \in (1/p)\BZ$; 
remark that $d=0$ if and only if $\alpha \in N$. 
By \eqref{eq:Yse}, 
$(1 \otimes e^{\alpha})_{0}(1 \otimes t)$ is 
a scalar multiple of the coefficient of $z^{-1}$ in 
\begin{align*}
& E^{-}_{\sigma}(-\alpha,\,z)E^{+}_{\sigma}(-\alpha,\,z)
(\kappa^{0},\,e_{\alpha})
z^{ \alpha_{(0)}+
\{\pair{\alpha_{(0)}}{\alpha_{(0)}}-
\pair{\alpha}{\alpha}\}/2 } (1 \otimes t) \\
& \qquad
= E^{-}_{\sigma}(-\alpha,\,z)E^{+}_{\sigma}(-\alpha,\,z)
  (1 \otimes v \bigr)z^{-1+d} \\
& \qquad
= E^{-}_{\sigma}(-\alpha,\,z)
  (1 \otimes v \bigr)z^{-1+d} \\
& \hspace*{30mm}
\quad \text{since $\alpha_{(pn)}(n)1=0$ for all $n \in (1/p)\BZ_{>0}$} \\
& \qquad
= (1 \otimes v)z^{-1+d} + 
  \text{(higher terms)}. 
\end{align*}
If $\alpha \notin N$, then 
the coefficient of $z^{-1}$ in 
$Y_{\sigma}(1 \otimes e^{\alpha},\,z)(1 \otimes t)$ is 
equal to $0$ (since $d > 0$), which implies that 
$(1 \otimes e^{\alpha})_{0}(1 \otimes t)=0$. 
Thus we have proved part (2). 
Assume that $\alpha \in N$. Then, 
$(\kappa^{0},\,e_{\alpha}) \in \ha{N}_{\sigma}$, 
and $(1 \otimes e^{\alpha})_{0}$ sends $1 \otimes t 
\in (V_{L}^{T})_{\rho}=\BC 1 \otimes T$ to a scalar multiple of 
$1 \otimes v=1 \otimes \bigl((\kappa^{0},\,e_{\alpha}) \cdot t\bigr)$.
Therefore, by Remark~\ref{rem:NT}, 
$(1 \otimes e^{\alpha})_{0}$ is semisimple on $(V_{L}^{T})_{\rho}$. 
Thus we have proved part (3). 
\end{proof}

The following lemma may be known, 
but we give a proof for completion. 
%
%
\begin{lem} \label{lem:irred}
The $\sigma$-twisted $V_{L}$-module $V_{L}^{T}=
M(1)[\sigma] \otimes U_{T}$ is irreducible.
\end{lem}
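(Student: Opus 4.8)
The plan is to take an arbitrary nonzero $\sigma$-twisted submodule $W \subseteq V_{L}^{T}=M(1)[\sigma]\otimes U_{T}$ and show $W=V_{L}^{T}$ in two stages: first peel off the twisted Fock factor $M(1)[\sigma]$, reducing the problem to a subspace $W' \subseteq U_{T}$; then prove $W'=U_{T}$ by combining the momentum grading coming from the zero modes with the irreducibility of $T$ over $\ha{N}_{\sigma}$. First I would use that $W$ is stable under every mode $h(n)$, $n\in(1/p)\BZ$, supplied by \eqref{eq:Ysh}, i.e.\ under the twisted Heisenberg algebra. The nonzero modes act only on the first tensor factor and make $M(1)[\sigma]$ an irreducible Fock module that is $(1/p)\BZ_{\ge 0}$-graded, bounded below, with one-dimensional bottom piece $\BC 1$. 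A standard multiplicity-space argument (descend an arbitrary vector to the bottom by annihilation modes, using that the only vectors killed by all of them are $\BC 1\otimes U_{T}$, then climb back by creation modes) forces $W=M(1)[\sigma]\otimes W'$ with $W':=\{w\in U_{T}\mid 1\otimes w\in W\}\neq 0$, and moreover $W\cap(1\otimes U_{T})=1\otimes W'$.

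Next I would bring in the remaining structure. The zero modes $h(0)=h_{(0)}(0)$ (the $n=0$ term of \eqref{eq:Ysh}) act on $W'$, and by \eqref{eq:hn} they act by the momentum scalar $\pair{h_{(0)}}{\ol{g}_{(0)}}$; hence $W'$ is graded by momentum, $W'=\bigoplus_{\mu}W'_{\mu}$ with $W'_{\mu}\subseteq U_{T}^{\mu}$, the sectors being indexed by $L/N$ (recall from \eqref{eq:N} that $N$ is exactly the zero-momentum part, and $g N\mapsto \ol{g}_{(0)}$ is injective). Then I would feed $1\otimes w$, with $w$ lying in a single momentum sector, into the operator \eqref{eq:Yse}: since $E^{+}_{\sigma}(-\alpha,\,z)$ acts trivially on the Fock vacuum, the output is $c_{\alpha}\,z^{e}\,\bigl(1\otimes((\kappa^{0},\,e_{\alpha})\cdot w)\bigr)+(\text{strictly higher Fock-degree terms})$, where $c_{\alpha}\neq 0$, all higher-degree contributions of $E^{-}_{\sigma}$ carry strictly higher powers of $z$, and $z^{e}$ is a single well-defined power. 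Reading off the mode $(1\otimes e^{\alpha})_{-e-1}$ therefore places $1\otimes((\kappa^{0},\,e_{\alpha})\cdot w)$ into $W\cap(1\otimes U_{T})=1\otimes W'$, so that $W'$ is stable under every $(\kappa^{0},\,e_{\alpha})$, $\alpha\in L$, and hence under all of $\ha{L}_{\sigma}$.

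To conclude, I would restrict to a single momentum sector. The elements $(\kappa^{0},\,e_{\alpha})$ with $\alpha\in N$ preserve momentum and, together with $\kappa$, realize the $\ha{N}_{\sigma}$-action on $U_{T}^{\mu}$, which is isomorphic to a conjugate of the irreducible module $T$ and is therefore itself irreducible; thus each $W'_{\mu}$ is $0$ or all of $U_{T}^{\mu}$. Since $W'\neq 0$, some $W'_{\mu}=U_{T}^{\mu}$, and then the invertible operators $(\kappa^{0},\,e_{\alpha})$ carry this sector onto every other sector as $\alpha_{(0)}$ runs over the image of $L$ in $\Fh_{(0)}$, i.e.\ over all of $L/N$. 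Hence $W'_{\nu}=U_{T}^{\nu}$ for every $\nu$, so $W'=U_{T}$ and $W=V_{L}^{T}$.

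The point requiring the most care is the last stage: rather than trying to prove that $U_{T}$ is irreducible as an abstract $\ha{L}_{\sigma}$-module (awkward because $L/N$ is infinite, so $U_{T}$ is induced from an infinite-index subgroup), the argument hinges on using the momentum grading furnished by the zero modes $h(0)$ to cut $U_{T}$ into the finitely-generated-per-sector pieces indexed by $L/N$, on each of which the already-established irreducibility of $T$ over $\ha{N}_{\sigma}$ (see the paragraph preceding Remark~\ref{rem:NT}) can be applied. A secondary technical obstacle is the leading-term extraction from \eqref{eq:Yse}: one must verify that there is a unique lowest power $z^{e}$, that its coefficient is $c_{\alpha}\neq 0$, and that the resulting vector lies exactly in the Fock-degree-zero subspace $1\otimes W'$, so that the group-stability of $W'$ follows cleanly.
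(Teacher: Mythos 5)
Your proof is correct and uses exactly the same ingredients as the paper's: the Vandermonde separation via the zero modes $h(0)$, the irreducibility of the twisted Fock factor over $\ha{\Fh}[\sigma]$, the leading-term extraction from $Y_{\sigma}(1 \otimes e^{\alpha},\,z)$, and the irreducibility of $T$ over $\ha{N}_{\sigma}$. The only difference is the order of the two reductions (you peel off $M(1)[\sigma]$ first and then work sector-by-sector in $U_{T}$, whereas the paper first isolates a single summand $u \otimes (g \cdot t)$ and then invokes the Fock-module structure), which is an inessential reorganization.
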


\begin{proof}
Let $W \subset V_{L}^{T}$ be a nonzero $\sigma$-twisted $V_{L}$-submodule. 
First, we show that $W$ contains a nonzero element of the form: 
$u \otimes (g \cdot t)$ for some $u \in M(1)[\sigma]$ 
and $g \in \ha{L}_{\sigma}$, $t \in T$. 
Take a complete set $\bigl\{g_{i} \mid i \in I\bigr\} 
\subset \ha{L}_{\sigma}$ of representatives for cosets 
in $\ha{L}_{\sigma}/\ha{N}_{\sigma}$. Then, 
\begin{equation*}
U_{T}=\bigoplus_{i \in I} g_{i} \cdot T.
\end{equation*}
Let $w \in W$, $w \ne 0$. There exists a finite subset $J$ of $I$ 
such that $w= \sum_{i \in J} u_{i} \otimes (g_{i} \cdot t_{i})$
with some $u_{i} \in M(1)[\sigma]$ and $t_{i} \in T$ for $i \in J$.
For each $h \in \Fh_{(0)}$, we have
%
%
\begin{equation} \label{eq:irred1}
h(0)^{k}w=
\sum_{i \in J} \pair{h}{\ol{g_{i}}_{(0)}}^{k}
\bigl(u_{i} \otimes (g_{i} \cdot t_{i})\bigr) \qquad 
\text{for $0 \le k \le |J|-1$}. 
\end{equation}
Because $\ol{g_{i}}_{(0)}$, 
$i \in I$, are all distinct (notice that 
$\ol{g_{i}}_{(0)}=\ol{g_{j}}_{(0)} \iff 
\ol{g_{i}}- \ol{g_{j}} \in N \iff 
g_{i}g_{j}^{-1} \in \ha{N}_{\sigma}$), 
we can take $h \in \Fh_{(0)}$ in such a way that 
$\pair{h}{\ol{g_{i}}_{(0)}}$, $i \in J$, 
are all distinct. Then the coefficient matrix of 
equation system \eqref{eq:irred1} (which is a 
Vandermonde type matrix) is invertible. Therefore, 
for each $i \in J$, $u_{i} \otimes (g_{i} \cdot t_{i})$ 
can be written as a linear combination of $h(0)^{k}w$ for
$0 \le k \le |J|-1$. Since $h(0)=(h(-1)1 \otimes e^{0})_{0}$ 
by \eqref{eq:Ysh}, and since $W$ is a $\sigma$-twisted $V_{L}$-submodule 
by assumption, we get
$u_{i} \otimes (g_{i} \cdot t_{i}) \in W$ for every $i \in J$. 

Next, let us show that $W$ includes the top weight space 
$(V_{L}^{T})_{\rho}=\BC 1 \otimes U_{T}$. Take 
$g \in \ha{L}_{\sigma}$ and $t \in T$ such that 
\begin{equation*}
W \cap \bigl(M(1)[\sigma] \otimes (g \cdot t)\bigr) \ne \{0\}.
\end{equation*}
By virtue of \eqref{eq:hn} and \eqref{eq:Ysh}, 
both of $W$ and $M(1)[\sigma] \otimes (g \cdot t)$ are 
$\ha{\Fh}[\sigma]$-modules. Furthermore, 
we deduce by standard argument 
(as for the Fock space over the Heisenberg algebra) that 
$M(1)[\sigma] \otimes (g \cdot t)$ is 
an irreducible $\ha{\Fh}[\sigma]$-module. Thus, 
$W \cap \bigl(M(1)[\sigma] \otimes (g \cdot t)\bigr)= 
M(1)[\sigma] \otimes (g \cdot t)$, which implies that
\begin{equation*}
W \supset M(1)[\sigma] \otimes (g \cdot t).
\end{equation*}
In particular, $W \supset \BC1 \otimes (g \cdot t)$.  
Now, for $g'=(\kappa^{a},\,e_{\alpha}) \in \ha{L}_{\sigma}$, 
we deduce, as in the proof of part (2), (3) of Lemma~\ref{lem:Ysh}, 
that 
\begin{equation*}
Y(1 \otimes e^{\alpha},\,z)(\underbrace{1 \otimes (g \cdot t)}_{\in W}) 
= C (1 \otimes (g'g \cdot t)) z^{d} + \text{(higher terms)}
\end{equation*}
for some $C \in \BC^{\times}$ and $d \in (1/p)\BZ$; 
recall that $(\kappa,\,e_{0}) \in \ha{L}_{\sigma}$ acts on 
$U_{T}$ as a scalar multiple by $\xi$. Hence, 
$1 \otimes (g'g \cdot t) \in W$ for every $g' \in \ha{L}_{\sigma}$, 
which implies that $1 \otimes U_{T} \subset W$. 

Finally, since $W$ is an $\ha{\Fh}[\sigma]$-module as mentioned above, 
it follows immediately that $M(1)[\sigma] \otimes U_{T} \subset W$, 
and hence $W=M(1)[\sigma] \otimes U_{T}$. We have thus proved the lemma. 
\end{proof}

%
\subsection{Miyamoto's $\BZ_{3}$-orbifold construction.}
\label{subsec:Z3}

Keep the notation in the previous subsections 
\S\ref{subsec:lattice} and \S\ref{subsec:twisted}. 
Assume, in addition, that 

(i) $L$ is unimodular, and $\sigma \in \Aut(L)$ is of order $3$; 

(ii) $\rho=\frac{1}{18}(\dim \Fh_{(1)} + \dim \Fh_{(2)}) \in (1/3)\BZ$ 
(see \cite[\S5]{M}).

\noindent
Since $L$ is unimodular by assumption (i), 
the lattice VOA $V_{L}$ is holomorphic. 
Also we know from \cite[Theorem 10.3]{DLM00} that 
for each $r=1,\,2$, there exists a unique irreducible 
$\sigma^{r}$-twisted $V_{L}$-module, 
which we denote by $V_{L}(\sigma^{r})$; 
by Lemma~\ref{lem:irred}, these twisted $V_{L}$-modules 
can be obtained by the method (due to Dong and Lepowsky) 
mentioned in \S\ref{subsec:twisted}. 
Recall from \eqref{eq:wt} 
that $V_{L}(\sigma)$ and $V_{L}(\sigma^{2})$ decompose as follows:
\begin{equation*}
V_{L}(\sigma)=\bigoplus_{n \in (1/3)\BZ_{\ge 0}} V_{L}(\sigma)_{\rho+n}, \qquad 
V_{L}(\sigma^{2})=\bigoplus_{n \in (1/3)\BZ_{\ge 0}} V_{L}(\sigma^{2})_{\rho+n}.
\end{equation*}
%
%
\begin{rem} \label{rem:dual}
We see from \cite[Lemma 3.7]{DLM98} that the restricted dual 
\begin{equation*}
V_{L}(\sigma)':=
 \bigoplus_{n \in (1/3)\BZ_{\ge 0}} V_{L}(\sigma)_{\rho+n}^{\ast}
\end{equation*}
of $V_{L}(\sigma)$ admits an irreducible $\sigma^{2}$-twisted 
$V_{L}$-module structure. By uniqueness, 
$V_{L}(\sigma^{2})$ is isomorphic to $V_{L}(\sigma)'$.
\end{rem}

Recall that $\rho \in (1/3)\BZ_{\ge 0}$ 
by assumption (ii). Set 
\begin{equation*}
V_{L}(\sigma)_{\BZ}=
  \bigoplus_{n \in \BZ} V_{L}(\sigma)_{n}, \qquad 
V_{L}(\sigma^{2})_{\BZ}=
  \bigoplus_{n \in \BZ} V_{L}(\sigma^{2})_{n}, 
\end{equation*}
and then
\begin{equation*}
\ti{V}_{L}^{\sigma}:=
 V_{L}^{\sigma} \oplus V_{L}(\sigma)_{\BZ} \oplus V_{L}(\sigma^{2})_{\BZ},
\end{equation*}
where $V_{L}^{\sigma}$ is the fixed point subVOA of $V_{L}$ 
under $\sigma \in \Aut(V_{L})$. 
%
%
\begin{thm}[{\cite[\S5]{M}}] \label{thm:M}
We can give $\ti{V}_{L}^{\sigma}$ a VOA structure 
whose central charge is equal to the rank of the lattice $L$.
Furthermore, the VOA $\ti{V}_{L}^{\sigma}$ is $C_{2}$-cofinite and 
holomorphic. 
\end{thm}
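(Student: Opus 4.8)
The plan is to realize $\ti{V}_{L}^{\sigma}$ as a $\BZ_{3}$-graded simple current extension of the fixed-point subVOA $V_{L}^{\sigma}$, and then to deduce holomorphicity from a categorical dimension count. First I would establish that $V_{L}^{\sigma}$ is regular: the lattice VOA $V_{L}$ is rational and $C_{2}$-cofinite, and $\sigma$ has finite order, so by the orbifold results of Dong, Li and Mason (together with Miyamoto~\cite{M}) the fixed-point subVOA $V_{L}^{\sigma}$ is again rational and $C_{2}$-cofinite. By Huang's theorems the module category of $V_{L}^{\sigma}$ is then a modular tensor category, which is the arena for the rest of the argument, and the central charge of any extension stays equal to $\rank L$.

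Next I would identify the three summands of $\ti{V}_{L}^{\sigma}$ as irreducible $V_{L}^{\sigma}$-modules. The untwisted VOA splits as $V_{L}=\bigoplus_{j=0}^{2} V_{L}^{(j)}$ into $\sigma$-eigenspaces with $V_{L}^{(0)}=V_{L}^{\sigma}$; likewise the induced order-$3$ automorphism of each $V_{L}(\sigma^{r})$ decomposes it into three eigenspaces, which by the integrality assumption (ii) on $\rho$ coincide with the weight classes modulo $1$. In particular $V_{L}(\sigma)_{\BZ}$ and $V_{L}(\sigma^{2})_{\BZ}$ are single irreducible $V_{L}^{\sigma}$-modules. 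Using the uniqueness of the twisted modules from \cite{DLM00} together with Remark~\ref{rem:dual}, I would check that each is a simple current (quantum dimension $1$) and that they satisfy the fusion rules $V_{L}(\sigma)_{\BZ}\boxtimes V_{L}(\sigma)_{\BZ}\cong V_{L}(\sigma^{2})_{\BZ}$ and $V_{L}(\sigma)_{\BZ}\boxtimes V_{L}(\sigma^{2})_{\BZ}\cong V_{L}^{\sigma}$, so that $\{V_{L}^{\sigma},\,V_{L}(\sigma)_{\BZ},\,V_{L}(\sigma^{2})_{\BZ}\}$ forms a group isomorphic to $\BZ_{3}$ under fusion.

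With this in hand I would invoke the theory of abelian intertwining algebras (Dong--Lepowsky~\cite{DL}): a $\BZ_{3}$-graded family of inequivalent simple currents all of whose conformal weights lie in $\BZ$ can be assembled, via a suitable normalization of the intertwining operators among the twisted sectors, into a single honest VOA structure on the direct sum. This is the step I expect to be the main obstacle. One must construct the cross-bracket vertex operators between the two twisted sectors and verify the Jacobi identity, which reduces to showing that the associated obstruction (a class in $H^{3}(\BZ_{3},\,\BC^{\times})$, equivalently a compatibility among the braiding and conformal-weight phases) is trivial. It is precisely the integrality condition (ii) that forces the relevant phases to be trivial and kills this obstruction; controlling the normalizations of the twisted intertwining operators built from \eqref{eq:Yse} is the delicate point.

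Finally, holomorphicity and $C_{2}$-cofiniteness follow formally. Since $V_{L}$ is holomorphic, its module category is trivial of global dimension $1$, and the $\BZ_{3}$-orbifold multiplies the global dimension by $|\BZ_{3}|^{2}=9$, so the module category of $V_{L}^{\sigma}$ has global dimension $9$. The three summands constitute a connected commutative (\'etale) algebra object of Frobenius--Perron dimension $3$, whose squared dimension $9$ equals the global dimension of the category; hence this algebra is Lagrangian, its category of local modules is trivial, and $\ti{V}_{L}^{\sigma}$ is holomorphic. Moreover $C_{2}$-cofiniteness is inherited because $\ti{V}_{L}^{\sigma}$ is a finite direct sum of $C_{2}$-cofinite $V_{L}^{\sigma}$-modules, and the central charge remains $\rank L$.
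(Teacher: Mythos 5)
This theorem is not proved in the paper at all: it is imported verbatim from Miyamoto's work, with the citation \cite[\S 5]{M} serving as the entire proof. So the relevant comparison is with Miyamoto's argument, and there your proposal, while it sketches what is by now the standard categorical route (a $\BZ_{3}$-graded simple current extension, cf.\ Remark~\ref{rem:SCE}), has a genuine gap at its foundation. You assert that the regularity (rationality and $C_{2}$-cofiniteness) of $V_{L}^{\sigma}$ follows from ``the orbifold results of Dong, Li and Mason.'' It does not. Dong--Li--Mason establish the existence and properties of twisted modules, but the rationality and $C_{2}$-cofiniteness of the fixed-point subVOA $V^{G}$ for a finite cyclic group $G$ was not available when this construction was carried out; indeed, proving $C_{2}$-cofiniteness of $V_{L}^{\sigma}$ for an order-$3$ lattice automorphism is one of the main technical achievements of \cite{M} itself, obtained by a direct analysis rather than by citation. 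Everything downstream in your argument --- the modular tensor category structure on $V_{L}^{\sigma}$-mod, the quantum-dimension and fusion-rule computations, and especially the claim that the global dimension of $V_{L}^{\sigma}$-mod equals $|\BZ_{3}|^{2}=9$ --- depends on exactly this regularity, so the steps you describe as following ``formally'' are in fact where the real work lives.

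Two smaller points. First, your treatment of the obstruction is imprecise: $H^{3}(\BZ_{3},\BC^{\times})\cong\BZ/3\BZ$ is not trivial, so the obstruction does not vanish for free; what assumption (ii) actually buys is that the conformal weights of the two twisted sectors are integers, which makes the associated quadratic form on the fusion group $\BZ/3\BZ$ trivial, and it is the theory of simple current extensions (not bare group cohomology) that then produces the VOA structure. You gesture at this but should invoke the correct statement. Second, Miyamoto's route in \cite{M} is more concrete than yours: he works directly with the Dong--Lepowsky realization of the twisted modules recalled in \S\ref{subsec:twisted}, constructs the intertwining operators among the three sectors explicitly, and deduces holomorphicity from the classification of irreducible $V_{L}^{\sigma}$-modules rather than from a Lagrangian-algebra dimension count. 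Your categorical argument can be made rigorous with the later general cyclic orbifold machinery, but as written it presupposes its hardest inputs.
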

%
%
\begin{rem} \label{rem:SCE}
In fact, the VOA $\ti{V}_{L}^{\sigma}$ is 
a $(\BZ/3\BZ)$-graded simple current extension 
of $V_{L}^{\sigma}$; for the definition and 
properties of simple current extensions, 
see, e.g., \cite[\S2]{LY}. 
\end{rem}

Denote by 
\begin{equation*}
\ti{Y}(\cdot\,,\,z):
 \ti{V}_{L}^{\sigma} \rightarrow 
 (\End_{\BC} \ti{V}_{L}^{\sigma})[[z,\,z^{-1}]], \quad 
a \mapsto \ti{Y}(a,\,z)=\sum_{n \in \BZ} a_{n} z^{-n-1}
\end{equation*}
the vertex operator for the VOA $\ti{V}_{L}^{\sigma}$. 
Then, for $a \in V_{L}^{\sigma}$, 
%
%
\begin{equation} \label{eq:Yti}
\ti{Y}(a,\,z)=
\begin{cases}
Y(a,\,z) & \text{on $V_{L}^{\sigma}$}, \\[1.5mm]
Y_{\sigma}(a,\,z) & \text{on $V_{L}(\sigma)_{\BZ}$}, \\[1.5mm]
Y_{\sigma^{2}}(a,\,z) & \text{on $V_{L}(\sigma^{2})_{\BZ}$}.
\end{cases}
\end{equation}

%
\subsection{Lie algebra of the weight one space.}
\label{subsec:wt1}
First, let us recall the following basic fact: 
Let $V=\bigoplus_{n \in \BZ_{\ge 0}} V_{n}$ be a (general) VOA 
with $a \in V \mapsto Y(a,\,z)=\sum_{n \in \BZ} a_{n}z^{-n-1} 
\in (\End_{\BC} V)[[z,\,z^{-1}]]$ the vertex operator. 
If $\dim V_{0}=1$, then the weight one space $V_{1}$ of $V$ 
carries a Lie algebra structure with Lie bracket 
given by: $[a,\,b]=a_{0}b$ for $a,\,b \in V_{1}$. 
%
%
\begin{rem} \label{rem:Z3}
Keep the notation and setting in \S\ref{subsec:Z3}. 
We see that the VOA $\ti{V}_{L}^{\sigma}$ in Theorem~\ref{thm:M} 
satisfies the condition that $\dim (\ti{V}_{L}^{\sigma})_{0} = 1$. 
The Lie algebra $(\ti{V}_{L}^{\sigma})_{1}$ of the weight one space 
decomposes as:
\begin{equation*}
(\ti{V}_{L}^{\sigma})_{1} = 
(V_{L}^{\sigma})_{1} \oplus V(\sigma)_{1} \oplus V(\sigma^{2})_{1}.
\end{equation*}
This decomposition makes $(\ti{V}_{L}^{\sigma})_{1}$ 
a $(\BZ/3\BZ)$-graded Lie algebra, where 
$(V_{L}^{\sigma})_{1}$ (resp., $V(\sigma)_{1}$, 
$V(\sigma^{2})_{1}$) is the subspace of degree $\ol{0} \in \BZ/3\BZ$ 
(resp., $\ol{1},\,\ol{2} \in \BZ/3\BZ$).
In particular, $(V_{L}^{\sigma})_{1}$ is a Lie subalgebra of 
$(\ti{V}_{L}^{\sigma})_{1}$, and each of 
$V_{L}(\sigma)_{1}$ and $V_{L}(\sigma^{2})_{1}$ is 
a module over $(V_{L}^{\sigma})_{1}$ via the adjoint action 
(see also \eqref{eq:Yti}). 
\end{rem}

The next proposition follows immediately 
from \cite[Theorem 3]{DM04}. 
%
%
\begin{prop} \label{prop:DM}
Let $V=\bigoplus_{n \in \BZ_{\ge 0}} V_{n}$ be 
a $C_{2}$-cofinite, holomorphic VOA of central charge $24$ satisfying 
the condition that $\dim V_{0}=1$ 
(such as $\ti{V}_{L}^{\sigma}$ in Theorem~\ref{thm:M}). 
Then the Lie algebra $V_{1}$ of the weight one space is 
$\bigl\{0\bigr\}$, abelian (of dimension $24$), or semisimple.
\end{prop}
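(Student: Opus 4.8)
The plan is to read the statement as a structural trichotomy for the finite-dimensional Lie algebra $V_{1}$, and to obtain it in two stages: first prove that $V_{1}$ is \emph{reductive}, and then show that its centre $\Fz$ and its semisimple part $\Fs$ cannot both be nonzero, with the purely abelian case forced to have dimension $24$. This is precisely the content isolated in \cite[Theorem 3]{DM04}, so what follows is only an indication of the mechanism I would reconstruct.

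For reductivity I would use the invariant symmetric bilinear form on $V_{1}$ given by $a_{1}b=\pair{a}{b}\mathbf{1}\in V_{0}=\BC\mathbf{1}$ (note $\mathrm{wt}(a_{1}b)=0$ for $a,b\in V_{1}$), the Lie bracket being $a_{0}b$. Since $\dim V_{0}=1$ and $V$ is holomorphic, hence self-dual ($V\cong V'$), the global invariant form on $V$ is nondegenerate and restricts to a nondegenerate form on $V_{1}$. The zero-modes $a_{0}$ ($a\in V_{1}$) generate an action of the affine Lie algebra $\ha{V_{1}}$ on $V$, and $C_{2}$-cofiniteness forces this action to be integrable, so each simple ideal acts at a positive integer level and the nilpotent part of the solvable radical acts by nilpotent, hence by integrability zero, operators. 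Together these give $V_{1}=\Fz\oplus\Fs$ with $\Fz$ abelian (the centre) and $\Fs=\bigoplus_{i}\Fg_{i}$ semisimple.

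The decisive step is the quantitative constraint coming from modular invariance. Because $V$ is $C_{2}$-cofinite and holomorphic of central charge $24$, its graded dimension is a weight-zero modular function on $SL_{2}(\BZ)$ with a simple pole at the cusp, hence equal to $j+(\dim V_{1}-744)$; comparing this with the decomposition of $V$ over the affine subVOA generated by $V_{1}$ yields a common value $\lambda=(\dim V_{1}-24)/24$ with $h^{\vee}_{i}/k_{i}=\lambda$ for every simple ideal $\Fg_{i}$ (at level $k_{i}$), while each abelian Heisenberg current forces the same constant to equal $0$. Since $h^{\vee}_{i}/k_{i}>0$ for any simple Lie algebra at positive level, the abelian and semisimple parts cannot coexist: if $\Fz\neq\{0\}$ then $\lambda=0$, so there are no simple ideals and $\dim V_{1}=24$; if $\Fz=\{0\}$ then $V_{1}=\Fs$ is semisimple or $\{0\}$. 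This is exactly the asserted trichotomy.

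The main obstacle is the first stage. Nondegeneracy of the form on $V_{1}$ only rules out a radical that pairs trivially, and the genuine work is showing the nilpotent part of the radical vanishes, which really needs the integrability supplied by $C_{2}$-cofiniteness rather than any formal Lie-theoretic manipulation; the uniform level relation in the third paragraph is likewise a nontrivial consequence of rationality and modular invariance. Since both inputs are furnished by \cite[Theorem 3]{DM04}, the conclusion is immediate once that theorem is in hand, and in the write-up I would simply quote it.
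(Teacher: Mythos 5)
Your proposal is correct and takes the same route as the paper: the paper's entire proof is the single remark that the proposition ``follows immediately from \cite[Theorem 3]{DM04}'', which is exactly the citation you fall back on. Your additional sketch of the internals of that theorem (reductivity of $V_{1}$ from the nondegenerate invariant form and integrability, then the modular-invariance relation $h^{\vee}_{i}/k_{i}=(\dim V_{1}-24)/24$ forcing the centre and the semisimple part not to coexist) is a faithful outline of Dong--Mason's argument and goes beyond what the paper records, but it is not a different proof of the proposition itself.
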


Keep the setting in Proposition~\ref{prop:DM}. 
When the Lie algebra $V_{1}$ of the weight one space is semisimple,
we define the level of a simple component of $V_{1}$ as follows 
(see also, e.g., \cite[\S3]{DM04} and \cite[\S3]{DM06}): 
Let $\Fs \subset V_{1}$ be a simple component of $V_{1}$, and 
$(\cdot\,,\,\cdot)_{\Fs}$ the nondegenerate, symmetric, 
invariant bilinear form on $\Fs$, 
normalized so that the squared length of 
a long root of $\Fs$ is equal to $2$. 
Then, for $x,\,y \in \Fs$ and $m,\,n \in \BZ$, we have
\begin{equation*}
[x_{m},\,y_{n}]= 
(x_{0}y)_{m+n}+k_{\Fs}m\delta_{m+n,\,0}(x,\,y)_{\Fs}\id_{V} \quad \text{on $V$}
\end{equation*}
for some $k_{\Fs} \in \BZ_{\ge 1}$ (see \cite[Theorem 3.1\,(c)]{DM06}); 
we call $k_{\Fs}$ the level of $\Fs$, and say that $\Fs$ is 
of type $X_{\ell,\,k_{\Fs}}$ if $\Fs$ is 
the simple Lie algebra of type $X_{\ell}$.
We know from \cite[(1.1)]{DM04} that 
%
%
\begin{equation} \label{eq:level}
\frac{h_{\Fs}^{\vee}}{k_{\Fs}}=\frac{\dim V_{1}-24}{24}
\end{equation}
for every simple component $\Fs$ of $V_{1}$, where 
$h_{\Fs}^{\vee}$ is the dual Coxeter number of 
the simple Lie algebra $\Fs$.  

In the following, we denote by $\Fg_{X_{\ell}}$ 
the simple Lie algebra of type $X_{\ell}$. 
%
%
\begin{rem} \label{rem:wt1}
A Niemeier lattice is, by definition, a unimodular, positive-definite, 
even lattice of rank $24$; for the list of all Niemeier lattices, 
see \cite[Chapter 16, Table 16.1]{CS}. Let $L$ be a Niemeier lattice.
Then the lattice VOA $V_{L}$ is $C_{2}$-cofinite, holomorphic, and 
of central charge $24$. 
Let $Q$ be the root lattice of $L$, i.e., 
the sublattice of $L$ generated by the roots $\Delta(L)$. 
If $Q=\bigl\{0\bigr\}$, then $(V_{L})_{1}$ is an abelian Lie algebra 
of dimension $24$. Assume that $Q$ is not identical to 
$\bigl\{0\bigr\}$, and decomposes into the direct sum of 
indecomposable root lattices as follows: 
$Q=Q_{1} \oplus \cdots \oplus Q_{m}$. Then, 
$(V_{L})_{1} \cong \Fs_{1} \oplus \cdots \oplus \Fs_{m}$ 
as Lie algebras, where $\Fs_{k}$, $1 \le k \le m$, is 
the simple Lie algebra whose type is same with that of $Q_{k}$. 
In addition, the level of $\Fs_{k}$ is equal to $1$ 
for every $1 \le k \le m$. 
\end{rem}

%
\section{Construction of a holomorphic VOA (1).}
\label{sec:hvoa1}

%
\subsection{Root lattice $A_{n}$.}
\label{subsec:An}

Following \cite[Chapter 4, \S6.1]{CS}, we set
\begin{equation*}
A_{n}:=\bigl\{ 
(x_{0},\,x_{1},\,\dots,\,x_{n}) \in \BZ^{n+1} 
 \mid x_{0}+x_{1}+ \cdots + x_{n} =0
\bigr\},
\end{equation*}
\begin{equation*}
\Delta(A_{n}):=\bigl\{\alpha \in A_{n} \mid 
\pair{\alpha}{\alpha}=2\bigr\} \quad 
\text{(the set of roots in $A_{n}$)},
\end{equation*}
\begin{equation*}
[\ell] : = 
\frac{1}{n+1}(\ell,\,\dots,\,\ell,\,
\underbrace{\ell-n-1,\,\dots,\,\,\ell-n-1}_{%
 \text{$\ell$ times}}) \in A_{n}^{\ast}
\qquad \text{for $\ell=0,\,1,\,\dots,\,n$}, 
\end{equation*}
where $A_{n}^{\ast} \subset A_{n} \otimes_{\BZ} \BR$ 
denotes the dual lattice of $A_{n}$. 
If we set $\ol{[\ell]} : = [\ell]+A_{n}$ for 
$\ell=0,\,1,\,\dots,\,n$, then 
$A_{n}^{\ast}/A_{n}=\bigl\{\ol{[\ell]} \mid 
\ell=0,\,1,\,\dots,\,n\bigr\}$, and 
there exists an isomorphism of additive groups
from $A_{n}^{\ast}/A_{n}$ to $\BZ/(n+1)\BZ=
\bigl\{\ol{\ell} \mid \ell=0,\,1,\,\dots,\,n\bigr\}$ that 
maps $\ol{[\ell]}$ to $\ol{\ell}$ for $\ell=0,\,1,\,\dots,\,n$. 
Furthermore, we define an action of the ring $\BZ/(n+1)\BZ$ on $A_{n}^{\ast}/A_{n}$ 
in such a way that the isomorphism 
$A_{n}^{\ast}/A_{n} \stackrel{\sim}{\rightarrow} \BZ/(n+1)\BZ$ above 
is also an isomorphism of $\BZ/(n+1)\BZ$-modules. Namely, we set 
\begin{equation*}
\ol{\ell_1} \cdot \ol{[\ell_2]} := \ol{[\ell_1 \ell_2 \mod n+1]} 
\quad \text{for $\ell_{1},\,\ell_{2}=0,\,1,\,\dots,\,n$}. 
\end{equation*}
%

%
\subsection{Niemeier lattice $\Ni(A_{2}^{12})$ and 
its automorphism $\sigma_{1}$ of order $3$.}
\label{subsec:Nie}

 In this subsection, 
we recall the definition of the Niemeier lattice $\Ni(A_{2}^{12})$ 
with $A_{2}^{12}$ the root lattice, and define a lattice automorphism 
$\sigma_{1} \in \Aut(\Ni(A_{2}^{12}))$ of order $3$. 

Define $Q$ to be the direct sum $A_{2}^{12}$ of 
12 copies of the root lattice $A_{2}$; 
following \cite[Chapter 10, \S1.5]{CS}, 
we use $\Omega:=\bigl\{\infty,\,0,\,1,\,\dots,\,10\bigr\}$ 
for the index set of the coordinate for $Q$, that is, 
$Q=\bigl\{(\alpha_{i})_{i \in \Omega} \mid 
  \text{$\alpha_{i} \in A_{2}$ for $i \in \Omega$}
  \bigr\}$.
Since $Q^{\ast}=(A_{2}^{\ast})^{12}$, it follows from \S\ref{subsec:An} that
\begin{equation*}
Q^{\ast}/Q=
 \bigl\{(\ol{[\ell_{i}]})_{i \in \Omega} \mid 
   \text{$\ell_{i}=0,\,1,\,2$ for each $i \in \Omega$}
 \bigr\},
\end{equation*}
which we can identify with the $12$-dimensional 
vector space $\BF_{3}^{12}$ over the field $\BF_{3}:=\BZ/3\BZ$ 
of three elements. 
For $\ol{[\ell]} \in A_{2}^{\ast}/A_{2}$ and $i \in \Omega$, 
define $\ol{[\ell]}^{(i)}$ to be the element 
$(\ol{[\ell_{i}]})_{i \in \Omega} \in Q^{\ast}/Q$ 
with $\ell_{i}=\ell$ and $\ell_{j}=0$ 
for all $j \in \Omega$, $j \ne i$. 
Then, $\bigl\{\ol{[1]}^{(i)} \mid i \in \Omega\bigr\}$ 
forms a basis of $Q^{\ast}/Q$, which we identify with 
the standard orthonormal basis of $\BF_{3}^{12}$. 

Denote by $\FS(\Omega)$ the symmetric group of $\Omega$; 
each element $g \in \FS(\Omega)$ acts linearly on 
the vector space $Q^{\ast}/Q$ by: $g \cdot \ol{[1]}^{(i)}=
\ol{[1]}^{(g(i))}$ for $i \in \Omega$. Define $\nu \in \FS(\Omega)$ by: 
$\nu=(\infty)(\ten 9876543210)$, where $\ten$ denotes $10$, 
that is, 
\begin{equation*}
\nu(i)=
\begin{cases}
10 & \text{if $i=0$}, \\[1.5mm]
i-1 & \text{if $1 \le i \le 10$}, \\[1.5mm]
\infty & \text{if $i=\infty$}.
\end{cases}
\end{equation*}
Set $\Theta:=\bigl\{0,\,1,\,3,\,4,\,5,\,9\bigr\} \subset \Omega$, 
and define 
\begin{equation*}
w_{0}:=\sum_{i \in \Omega \setminus \Theta}\ol{[1]}^{(i)}- 
\sum_{j \in \Theta}\ol{[1]}^{(j)}=
\sum_{i \in \Omega \setminus \Theta}\ol{[1]}^{(i)}+
2\sum_{j \in \Theta}\ol{[1]}^{(j)},
\end{equation*}
\begin{equation*}
w_{i}:=\nu^{i} \cdot w_{0} \quad \text{for $0 \le i \le 10$}, 
\qquad
w_{\infty}:=\sum_{i \in \Omega}\ol{[1]}^{(i)}. 
\end{equation*}
%
%
\begin{thm}[{\cite[Chapter 10, Theorems 2 and 3]{CS}}] \label{thm:Golay}
{\rm (1)} Define $\CC_{12}$ to be the subspace of $Q^{\ast}/Q$ 
spanned by $\bigl\{w_{i} \mid i \in \Omega\bigr\}$. 
Then, $\CC_{12}$ is isomorphic to the ternary Golay code in $\BF_{3}^{12}$.

{\rm (2)} The subspace $\CC_{12}$ is $6$-dimensional 
with $\bigl\{w_\infty,\,w_1,\,w_3,\,w_4,\,w_5,\,w_9\bigr\}$ a basis. 

{\rm (3)} The subspace $\CC_{12}$ is stable under the action of 
$\nu \in \FS(\Omega)$. 

{\rm (4)} Define $\delta \in \FS(\Omega)$ by: 
$\delta=(\infty)(0)(1)(2\ten)(34)(59)(67)(8)$.
Then, $\CC_{12}$ is stable under the action of 
$\delta \in \FS(\Omega)$.

{\rm (5)} Set $\sigma':=\nu^{-1}\circ\delta$. 
Then, $\sigma'=(\infty)(4)(7)(012)(35\ten)(689)$. 
In particular, the order of $\sigma'$ is equal to $3$. 
\end{thm}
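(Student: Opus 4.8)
The plan is to work throughout in the explicit coordinatization $Q^{\ast}/Q \cong \BF_{3}^{12}$ set up above, with coordinates indexed by $\Omega$ and the $\ol{[1]}^{(i)}$ serving as the standard basis. First I would record the two distinguished generators in these coordinates: $w_{\infty}$ is the all-ones vector $(1,1,\dots,1)$, while $w_{0}$ has entry $2$ in the positions of $\Theta=\{0,1,3,4,5,9\}$ and entry $1$ in the remaining positions $\Omega\setminus\Theta$. Since $\nu$ acts on $\BF_{3}^{12}$ by permuting coordinates according to the cyclic shift $i\mapsto i-1 \pmod{11}$ on $\{0,\dots,10\}$ and fixing $\infty$, each $w_{i}=\nu^{i}\cdot w_{0}$ for $1\le i\le 10$ is obtained from $w_{0}$ by cyclically rotating the non-$\infty$ coordinates. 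Part (3) then follows immediately: because $\nu$ has order $11$ on $\{0,\dots,10\}$ we get $\nu\cdot w_{i}=w_{i+1}$ for $0\le i\le 9$, $\nu\cdot w_{10}=\nu^{11}\cdot w_{0}=w_{0}$, and $\nu\cdot w_{\infty}=w_{\infty}$ (the all-ones vector is permutation-invariant); hence $\nu$ merely permutes the generating set $\{w_{i}\mid i\in\Omega\}$ and so stabilizes $\CC_{12}$. Part (5) is a direct composition of permutations: reading $\sigma'=\nu^{-1}\circ\delta$ as ``apply $\delta$, then $\nu^{-1}$'' with $\nu^{-1}(i)=i+1 \pmod{11}$, I would chase each point of $\Omega$ through $\delta$ and then through $\nu^{-1}$ to obtain the cycle decomposition $(\infty)(4)(7)(012)(35\ten)(689)$; its cycle type (two fixed points and three $3$-cycles) shows the order is $3$.

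For part (2) I would form the $6\times 12$ matrix over $\BF_{3}$ whose rows are the explicit coordinate vectors of $w_{\infty},\,w_{1},\,w_{3},\,w_{4},\,w_{5},\,w_{9}$ (each $w_{i}$ a cyclic rotation of $w_{0}$ as above) and reduce it to echelon form, exhibiting rank $6$; together with $\dim\CC_{12}=6$ this shows the six vectors form a basis. Part (1) is the identification of $\CC_{12}$ with the ternary Golay code: here I would invoke the construction of \cite[Chapter 10]{CS}, or, to keep the argument self-contained, verify that the code generated by the $w_{i}$ is a self-dual ternary $[12,6,6]$ code, the properties that characterize the (extended) ternary Golay code up to monomial equivalence.

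The main obstacle is part (4), the $\delta$-stability, since $\delta$ is not simply a relabelling of the generators the way $\nu$ is. The cleanest route is to combine the self-duality $\CC_{12}=\CC_{12}^{\perp}$ established in (1) with the fact that the coordinate permutation $\delta$ is an isometry of the standard inner product on $\BF_{3}^{12}$, which I denote $\pair{\cdot}{\cdot}$. Because $\delta$ is an involution, $\pair{\delta\cdot w_{j}}{w_{i}}=\pair{w_{j}}{\delta\cdot w_{i}}$, so it suffices to check that the symmetric $6\times 6$ table of inner products $\pair{\delta\cdot w_{j}}{w_{i}}$, with $w_{i},\,w_{j}$ ranging over the basis from (2), vanishes identically; this places each $\delta\cdot w_{j}$ in $\CC_{12}^{\perp}=\CC_{12}$, and $\delta$-stability then follows by comparing dimensions. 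Alternatively, one can solve for each basis vector the linear system expressing $\delta\cdot w_{j}$ in that basis, but the self-dual reduction confines the bookkeeping to a single $6\times 6$ orthogonality check.
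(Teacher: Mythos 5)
Your proposal is correct, but the comparison here is lopsided: the paper gives no proof of this statement at all. Theorem~\ref{thm:Golay} is quoted directly from \cite[Chapter 10, Theorems 2 and 3]{CS} (this is the standard quadratic-residue construction of the extended ternary Golay code --- note $\Theta=\{0,1,3,4,5,9\}$ is $\{0\}$ together with the quadratic residues mod $11$), so you are supplying a verification where the authors supply a citation. Your handling of (3) and (5) is exactly what a reader would fill in: $\nu$ permutes the generators ($w_{i}\mapsto w_{i+1\bmod 11}$, $w_{\infty}\mapsto w_{\infty}$), and the cycle decomposition of $\sigma'=\nu^{-1}\circ\delta$ is a twelve-point chase; both check out. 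For (2), one presentational caveat: you cannot invoke $\dim\CC_{12}=6$ as a given and then conclude the six named vectors form a basis --- you must show both that their $6\times 12$ matrix has rank $6$ and that the remaining generators $w_{0},w_{2},w_{6},w_{7},w_{8},w_{10}$ lie in its row space. The self-duality reduction for (4) is the best idea in your write-up, and it is cheap to justify directly: every $w_{i}$ has all coordinates $\pm 1$, so $\pair{w_{i}}{w_{i}}=12\equiv 0$, and pairwise orthogonality of the generators gives $\CC_{12}\subseteq\CC_{12}^{\perp}$, with equality by the dimension count from (2); the $\delta$-stability then collapses to a symmetric $6\times 6$ orthogonality table instead of six linear systems. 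The one genuinely laborious step you leave implicit is the minimum-distance-$6$ check needed to make (1) self-contained via the uniqueness of the self-dual ternary $[12,6,6]$ code; deferring that to \cite{CS}, as you allow yourself to do, is precisely what the paper does for the entire theorem. In short: the paper's route buys brevity by outsourcing everything to Conway--Sloane, while yours buys self-containedness at the price of several finite but nontrivial computations, and the structure of your argument is sound throughout.
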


We now set (see \cite[Chapter 18, \S4, I\hspace{-1pt}I]{CS})
\begin{equation*}
(Q \subset) \quad 
\Ni(A_{2}^{12}):=\bigsqcup_{C \in \CC_{12}} C \quad (\subset Q^{\ast}).
\end{equation*}
From now on, we arrange the coordinate of 
$Q^{\ast}=(A_{2}^{\ast})^{12}$ as follows:
\begin{equation*}
(\mu_{i})_{i \in \Omega}=
\bigl(
 \underbrace{\mu_{\infty},\,\mu_{4},\,\mu_{7}}_{\in (A_{2}^{\ast})^{3}} \mid
 \underbrace{\mu_{0},\,\mu_{3},\,\mu_{6}}_{\in (A_{2}^{\ast})^{3}} \mid
 \underbrace{\mu_{1},\,\mu_{5},\,\mu_{8}}_{\in (A_{2}^{\ast})^{3}} \mid
 \underbrace{\mu_{2},\,\mu_{10},\,\mu_{9}}_{\in (A_{2}^{\ast})^{3}}
\bigr).
\end{equation*}
We first define an automorphism $\sigma'$ of $Q^{\ast}$ of order $3$ by:
\begin{equation*}
\bigl(
 \bmu_{\infty47} \mid
 \bmu_{036} \mid
 \bmu_{158} \mid
 \bmu_{2\ten9}
\bigr) 
\stackrel{\sigma'}{\longrightarrow} 
\bigl(
 \bmu_{\infty47} \mid 
 \bmu_{2\ten9} \mid \bmu_{036} \mid \bmu_{158}
\bigr)
\end{equation*}
for $\bmu_{\infty47},\,\bmu_{036},\,
\bmu_{158},\,\bmu_{2\ten9} \in (A_{2}^{\ast})^{3}$.
Then we deduce from Theorem~\ref{thm:Golay}\,(5) that 
$\sigma'$ stabilizes the Niemeier lattice $\Ni(A_{2}^{12})$, 
and hence $\sigma' \in \Aut(\Ni(A_{2}^{12}))$. 
Next, let $\vp=\vp_{A_{2}}$ denote the lattice automorphism of $A_{2}$ 
defined by: $(x_0,x_1,x_2) \mapsto (x_2,x_0,x_1)$; note that 
$\vp(\ol{[\ell]})=\ol{[\ell]}$ for every $\ell=0,\,1,\,2$. 
So, if we define an automorphism $\sigma''$ of $Q^{\ast}$ 
(of order $3$) by:
\begin{equation*}
\bigl(
 \mu_{\infty},\,\mu_{4},\,\mu_{7} \mid \bmu_{036} \mid 
 \bmu_{158} \mid \bmu_{2\ten9}
\bigr) 
\stackrel{\sigma''}{\longrightarrow} 
\bigl(
 \vp(\mu_{\infty}),\,\vp(\mu_{4}),\,\vp(\mu_{7}) \mid \bmu_{036} \mid 
 \bmu_{158} \mid \bmu_{2\ten9}
\bigr)
\end{equation*}
for $\mu_{\infty},\,\mu_{4},\,\mu_{7} \in A_{2}^{\ast}$ and 
$\bmu_{036},\,\bmu_{158},\,\bmu_{2\ten9} \in (A_{2}^{\ast})^{3}$, 
then $\sigma''$ stabilizes the Niemeier lattice $\Ni(A_{2}^{12})$, 
and hence $\sigma'' \in \Aut(\Ni(A_{2}^{12}))$. Set
\begin{equation*}
\sigma_{1}:=\sigma' \circ \sigma'';
\end{equation*}
it is obvious that $\sigma_{1}$ is of order $3$. 
The fixed point subspace $\Fh_{(0)}$ 
of $\Fh=\Ni(A_{2}^{12}) \otimes \BC$ under the $\sigma_{1}$ above 
is identical to
\begin{equation*}
\bigl\{(\bzero \mid \bh \mid \bh \mid \bh) \mid 
\bh \in A_{2}^{3} \otimes \BC=(A_{2} \otimes \BC)^{3} \bigr\}, 
\end{equation*}
where $\bzero$ denotes the zero vector in 
$A_{2}^{3} \otimes \BC=(A_{2} \otimes \BC)^{3}$. 
Thus, $\dim \Fh_{(0)}=2 \times 3 = 6$, and hence
%
%
\begin{equation} \label{eq:rho}
\rho=\frac{1}{18}(\dim \Fh_{(1)} + \dim \Fh_{(2)})=
\frac{1}{18}(24-\dim \Fh_{(0)})=1.
\end{equation}
Also, remark that
%
%
\begin{equation} \label{eq:h12}
\Fh_{(1)} \oplus \Fh_{(2)}=N \otimes_{\BZ} \BC = 
\bigl\{
 (\bh \mid \bh_1 \mid \bh_2 \mid -\bh_1-\bh_2) \mid 
  \bh,\,\bh_1,\,\bh_2 \in A_{2}^{3} \otimes \BC
\bigr\};
\end{equation}
for the definition of $N$, see \eqref{eq:N}. 

By \eqref{eq:rho}, we can apply Theorem~\ref{thm:M} 
to $L=\Ni(A_{2}^{12})$ and the $\sigma_{1}$ above, and 
obtain a $C_{2}$-cofinite, holomorphic VOA $\ti{V}_{L}^{\sigma_{1}}$
of central charge $24$. 
We are ready to state the main result in this section. 
%
%
\begin{thm} \label{thm:main}
Keep the notation and setting above. 
For $L=\Ni(A_{2}^{12})$ and the $\sigma_{1}$ above, 
the Lie algebra $(\ti{V}_{L}^{\sigma_{1}})_{1}$ is 
of type $A_{2,3}^{6}$. 
Therefore, $\ti{V}_{L}^{\sigma_{1}}$ corresponds to 
No.\,6 on Schellekens' list \cite[Table 1]{Sch}.  
\end{thm}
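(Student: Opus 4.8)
The plan is to analyse the three homogeneous components of the $(\BZ/3\BZ)$-graded Lie algebra $(\ti{V}_{L}^{\sigma_1})_{1}=(V_{L}^{\sigma_1})_{1}\oplus V_{L}(\sigma_1)_{1}\oplus V_{L}(\sigma_1^{2})_{1}$ (Remark~\ref{rem:Z3}), to show that its dimension is $48$ and that it is semisimple, and finally to pin down the type by exhibiting a large semisimple ideal and comparing with Schellekens' list. For the degree-zero piece I would work from the spanning set $\{h(-1)1\otimes e^{0}\}\cup\{1\otimes e^{\alpha}\mid\alpha\in\Delta(L)\}$ of $(V_{L})_{1}$. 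Every root of $\Ni(A_{2}^{12})$ lies in a single $A_{2}$-factor of $Q=A_{2}^{12}$, and $\sigma_1(1\otimes e^{\alpha})=1\otimes e^{\sigma_1\alpha}$, so the $\sigma_1$-fixed root vectors are indexed by the $\sigma_1$-orbits on $\Delta(L)$. Since $\sigma_1$ acts on the three factors $\{\infty,4,7\}$ by $\vp_{A_2}$ (which fixes no root) and cyclically permutes the remaining factors in the three triples $\{0,1,2\}$, $\{3,5,\ten\}$, $\{6,8,9\}$, all orbits have size $3$; counting them ($2$ per factor for $\{\infty,4,7\}$ and $6$ per permuted triple) gives $3\times2+3\times6=24$ orbits, hence $24$ fixed root vectors, and with $\dim\Fh_{(0)}=6$ this yields $\dim(V_{L}^{\sigma_1})_{1}=30$. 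The $18$ fixed root vectors attached to the permuted triples, together with $\Fh_{(0)}$, form the diagonal subalgebra $\Fa\cong\Fg_{A_2}^{\oplus3}$, realized as the diagonal of three level-one copies of $A_2$ and hence of level $3$.

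Because $\rho=1$ by \eqref{eq:rho}, the weight-one space of each twisted module coincides with its top space, so \eqref{eq:top} gives $\dim V_{L}(\sigma_1)_{1}=\dim V_{L}(\sigma_1^{2})_{1}=|N/R|^{1/2}$ (the two being equal also by Remark~\ref{rem:dual}). I would then read off $|N/R|$ from the lattice data of \eqref{eq:N} and \eqref{eq:RM}: determine $N$, the sublattice $M=(1-\sigma_1)L$, and the radical $R/M$ of the alternating form $c_{0}^{\sigma_1}$ induced on $N/M$. The expected value is $|N/R|=3^{4}$, which yields $\dim V_{L}(\sigma_1)_{1}=9$ and therefore $\dim(\ti{V}_{L}^{\sigma_1})_{1}=30+9+9=48$.

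By Proposition~\ref{prop:DM}, dimension $48$ forces $(\ti{V}_{L}^{\sigma_1})_{1}$ to be semisimple. I would next show that $\Fa$ is an ideal. Indeed $\Fa$ commutes with $V_{L}(\sigma_1)_{1}\oplus V_{L}(\sigma_1^{2})_{1}$: its Cartan $\Fh_{(0)}$ acts trivially on the top spaces by Lemma~\ref{lem:Ysh}\,(1), while each of its root vectors is a $\sigma_1$-average of elements $1\otimes e^{\alpha}$ with $\alpha\in\Delta\setminus N$, which act trivially by Lemma~\ref{lem:Ysh}\,(2); and $\Fa$ commutes with the degree-zero root vectors coming from $\{\infty,4,7\}$ by orthogonality of the $A_{2}$-factors. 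Hence $\Fa\cong\Fg_{A_2}^{\oplus3}$ is a semisimple ideal of level $3$, and its complementary ideal $\Fb$ is semisimple of dimension $24$.

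Thus $(\ti{V}_{L}^{\sigma_1})_{1}$ is a semisimple Lie algebra of dimension $48$ arising as the weight-one space of a holomorphic VOA of central charge $24$ --- so it appears on Schellekens' list --- and it contains $A_{2,3}^{\oplus3}$ as a direct sum of ideals. Among the list entries of dimension $48$ (Nos.\,6, 8, 9, 11) only No.\,6, namely $A_{2,3}^{6}$, possesses an $A_{2,3}$ factor; this forces $(\ti{V}_{L}^{\sigma_1})_{1}\cong A_{2,3}^{6}$ (consistently with $h^{\vee}_{\Fs}/k_{\Fs}=(48-24)/24=1$ from \eqref{eq:level}). I expect the main obstacle to be the lattice computation in the second step --- pinning down $N$, $M=(1-\sigma_1)L$, and the radical of $c_{0}^{\sigma_1}$ on $N/M$ inside the ternary-Golay gluing $\Ni(A_{2}^{12})$, i.e.\ verifying $|N/R|=3^{4}$ --- since that is where the code genuinely enters; a secondary point is confirming that the diagonal $\Fa$ carries level $3$ in the VOA sense. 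Should one wish to avoid appealing to the completeness of Schellekens' list, one would instead identify $\Fb\cong A_{2,3}^{\oplus3}$ directly, analysing the semisimple action (Lemma~\ref{lem:Ysh}\,(3)) of the roots $\alpha\in\Delta\cap N$ on the twisted top space $1\otimes T$ via the explicit vertex operators \eqref{eq:Yse}; this is the genuinely computational route.
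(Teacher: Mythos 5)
Your analysis of the degree-zero part agrees with the paper's Lemma~\ref{lem:fixed}: the $\sigma_{1}$-orbit count gives $\dim(V_{L}^{\sigma_{1}})_{1}=30$, the three diagonal copies of $\Fg_{A_{2}}$ supported on the permuted triples are level-$3$ ideals of the whole Lie algebra (via Lemma~\ref{lem:Ysh}\,(1),(2), exactly as you argue), and the six averaged root vectors from the coordinates $\infty,4,7$ form an abelian piece. One remark on the middle step: the lattice computation of $|N/R|$ that you single out as the main obstacle is avoidable. Once one diagonal $A_{2}$ is known to be a level-$3$ simple ideal, semisimplicity already follows from Proposition~\ref{prop:DM} (the algebra has dimension $\ge 30$ and is nonabelian), and then \eqref{eq:level} with $h^{\vee}=k=3$ gives $\dim(\ti{V}_{L}^{\sigma_{1}})_{1}=48$ directly; this is what the paper does, and it then reads off $\dim V_{L}(\sigma_{1})_{1}=9$ rather than computing $|N/R|$.

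The genuine gap is in your final identification step. You determine the type of the complementary $24$-dimensional ideal by appealing to Schellekens' list, but the paper treats that list only as a list of \emph{possible} weight-one structures --- its completeness is not used anywhere as an input, and the statement ``corresponds to No.\,6'' is a conclusion, not a hypothesis --- so your route makes the identification conditional on an unproven classification and is circular in spirit. Moreover your enumeration of the dimension-$48$ entries (Nos.\,6, 8, 9, 11) is only the sublist of non-framed candidates from the introduction; the full list contains further dimension-$48$ entries (e.g.\ $A_{1,2}^{16}$), so even granting the list one would have to check all of them. The paper instead closes the argument unconditionally in Proposition~\ref{prop:A2-3}: the $6$-dimensional abelian algebra $\Fa$ acts semisimply on $V_{L}(\sigma_{1})_{1}$ and $V_{L}(\sigma_{1}^{2})_{1}$ (Lemma~\ref{lem:Ysh}\,(3) together with the commutativity of the relevant elements of $\ha{L}_{\sigma_{1}}$, checked in Lemma~\ref{lem:fixed}), so by \cite[Lemma~8.1\,b)]{Kac} its centralizer in $\Fg=\Fa\oplus V_{L}(\sigma_{1})_{1}\oplus V_{L}(\sigma_{1}^{2})_{1}$ is a Cartan subalgebra; the $(\BZ/3\BZ)$-grading forces this centralizer to equal $\Fa$, so $\Fg$ is semisimple of rank $6$ and dimension $24$, hence of type $A_{2}^{3}$ or $B_{2}A_{2}A_{1}^{2}$, and the grading rules out the latter. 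Your proposed fallback (explicitly computing the action on the twisted top space via \eqref{eq:Yse}) would presumably also work but is much heavier; the Cartan-subalgebra argument is the idea your proposal is missing.
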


%
\subsection{Proof of Theorem~\ref{thm:main}.}
\label{subsec:prf-main}

First, let us determine the 
Lie algebra structure of $(V_{L}^{\sigma_{1}})_{1}$. 
For $h \in A_{2} \otimes \BC$, define $h^{(i)}$ to be 
$(h_{i})_{i \in \Omega} \in \Fh=L \otimes \BC$ with
$h_{i}=h$ and $h_{j}=0$ for all $j \in \Omega$, $j \ne i$, and set
\begin{align*}
& h^{(012)}:=h^{(0)}+h^{(1)}+h^{(2)}=
  (0,\,0,\,0 \mid h,\,0,\,0 \mid h,\,0,\,0 \mid h,\,0,\,0), \\
& h^{(35\ten)}:=h^{(3)}+h^{(5)}+h^{(10)}=
  (0,\,0,\,0 \mid 0,\,h,\,0 \mid 0,\,h,\,0 \mid 0,\,h,\,0), \\
& h^{(689)}:=h^{(6)}+h^{(8)}+h^{(9)}=
  (0,\,0,\,0 \mid 0,\,0,\,h \mid 0,\,0,\,h\mid 0,\,0,\,h);
\end{align*}
note that $h^{(012)}=h^{(0)}+\sigma_{1} h^{(0)} + \sigma_{1}^{2} h^{(0)}$, and so on. 
Observe that the set $\Delta=\Delta(L)$ 
of roots in $L$ is identical to
$\bigl\{\alpha^{(i)} \mid \alpha \in \Delta(A_{2}), \, 
i \in \Omega\bigr\}$. 
We see that $\Delta$ is stable under $\sigma_{1} \in \Aut(L)$, and 
the action of $\sigma_{1}$ on $\Delta$ is fixed point free. 
Define $\Fg^{(012)}$ to be the subspace spanned by 
$\bigl\{h^{(012)}(-1) \otimes e^{0} \mid h \in A_{2} \otimes \BC \bigr\}$ 
and $\bigl\{1 \otimes e^{\alpha^{(0)}}+1 \otimes e^{\alpha^{(1)}}+
1 \otimes e^{\alpha^{(2)}} 
\mid \alpha \in \Delta(A_{2})\bigr\}$, 
and define $\Fg^{(35X)}$ and $\Fg^{(689)}$ in exactly 
the same way as $\Fg^{(012)}$ with $0,1,2$ replaced by 
$3,5,10$ and $6,8,9$, respectively. Also, define 
\begin{equation*}
\Fa:=\Span_{\BC}
 \bigl\{
   1 \otimes e^{\alpha^{(i)}}+
   1 \otimes e^{\vp(\alpha)^{(i)}}+
   1 \otimes e^{\vp^{2}(\alpha)^{(i)}} \mid 
 \alpha \in \Delta(A_{2}),\,i=\infty,\,4,\,7\bigr\};
\end{equation*}
note that $\dim \Fa = 2 \times 3=6$ 
since $|\Delta(A_{2})/\vp|=2$.
%
%
\begin{lem} \label{lem:fixed}
As a vector space, 
%
%
\begin{equation} \label{eq:fixed}
(V_{L}^{\sigma_{1}})_{1} = 
\Fg^{(012)} \oplus \Fg^{(35X)} \oplus \Fg^{(689)} \oplus \Fa, 
\end{equation}
and hence $\dim (V_{L}^{\sigma_{1}})_{1}=8 \times 3 + 6 = 30$.
Furthermore, $\Fg^{(012)}$, $\Fg^{(35X)}$, and $\Fg^{(689)}$ 
are ideals of the (whole) Lie algebra $(\ti{V}_{L}^{\sigma_{1}})_{1}$ 
isomorphic to the simple Lie algebra of type $A_{2}$. 
In addition, $\Fa$ is an abelian Lie subalgebra, and 
the (adjoint) actions of an element of $\Fa$ on 
$V_{L}(\sigma_{1})_{1}$ and $V_{L}(\sigma_{1}^{2})_{1}$ are semisimple. 
\end{lem}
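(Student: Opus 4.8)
The plan is to compute the $\sigma_1$-fixed subspace of $(V_L)_1$ directly and then treat the four summands one at a time. First, I would split $(V_L)_1$ into its Cartan part $\{h(-1)1\otimes e^0 : h\in\Fh\}$ and its root part $\{1\otimes e^\alpha : \alpha\in\Delta\}$ and take $\sigma_1$-invariants of each. Since $\sigma_1$ acts on the Cartan part through $\Fh$, the invariants there are exactly $\Fh_{(0)}$, which by the explicit form of $\Fh_{(0)}$ splits into the Cartan generators of $\Fg^{(012)}$, $\Fg^{(35X)}$, $\Fg^{(689)}$. On the root part, $\sigma_1$ permutes $\Delta=\{\alpha^{(i)}\}$ by its cycle structure: the three-cycles $(012)$, $(35\ten)$, $(689)$ yield the symmetric sums defining the root parts of the three $\Fg$'s, while on the coordinates $\infty,4,7$ the map $\vp$ breaks $\Delta(A_2)$ into two orbits of length $3$, yielding the six generators of $\Fa$. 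The four subspaces have pairwise disjoint coordinate supports, so the sum is direct, and counting gives $\dim(V_L^{\sigma_1})_1 = 8\cdot 3 + 6 = 30$.

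To identify each $\Fg$-summand with $\Fg_{A_2}$, I would use that $(V_L)_1=\bigoplus_{i\in\Omega}\Fg_{A_2}^{(i)}$ is a direct sum of mutually commuting copies of $\Fg_{A_2}$, since roots supported on distinct coordinates are orthogonal and their sum is never a root. Then $x\mapsto x^{(0)}+x^{(1)}+x^{(2)}$ is the diagonal embedding of $\Fg_{A_2}$ into the commuting sum $\Fg_{A_2}^{(0)}\oplus\Fg_{A_2}^{(1)}\oplus\Fg_{A_2}^{(2)}$; because the cross brackets vanish it is a Lie algebra isomorphism onto $\Fg^{(012)}$, and similarly for the other two.

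For the ideal property in $(\ti{V}_L^{\sigma_1})_1 = (V_L^{\sigma_1})_1\oplus V_L(\sigma_1)_1\oplus V_L(\sigma_1^2)_1$, I would argue by degree. Inside $(V_L^{\sigma_1})_1$ the four summands commute pairwise apart from the self-bracket (disjoint orthogonal supports), so $[\Fg^{(012)},(V_L^{\sigma_1})_1]\subseteq\Fg^{(012)}$. Since $\rho=1$ by \eqref{eq:rho}, the weight-one twisted spaces $V_L(\sigma_1)_1$ and $V_L(\sigma_1^2)_1$ are the respective top weight spaces, so Lemma~\ref{lem:Ysh} governs the adjoint $0$-mode action: the Cartan generators act trivially by Lemma~\ref{lem:Ysh}(1), and since $(\alpha^{(0)})_{(0)}=\tfrac13(\alpha^{(0)}+\alpha^{(1)}+\alpha^{(2)})\neq 0$ shows $\alpha^{(0)},\alpha^{(1)},\alpha^{(2)}\notin N$, the root generators act trivially by Lemma~\ref{lem:Ysh}(2). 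Hence $[\Fg^{(012)},V_L(\sigma_1^r)_1]=0$ for $r=1,2$, so $\Fg^{(012)}$ is an ideal of the whole Lie algebra; the same argument applies to $\Fg^{(35X)}$ and $\Fg^{(689)}$.

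Finally, for $\Fa$ I would first check commutativity: generators at different coordinates $\infty,4,7$ commute, and within one coordinate the bracket of the two orbit-sums reduces to the Cartan contributions from opposite roots, which are proportional to $(\gamma+\vp\gamma+\vp^2\gamma)^{(i)}(-1)1=0$ because each $\vp$-orbit sums to zero in $A_2$; thus $\Fa$ is abelian. For semisimplicity of the adjoint action on $V_L(\sigma_1)_1$, note that every root $\gamma^{(i)}$ with $i\in\{\infty,4,7\}$ occurring in $\Fa$ lies in $\Delta\cap N$, since $(\gamma^{(i)})_{(0)}=\tfrac13(\gamma+\vp\gamma+\vp^2\gamma)^{(i)}=0$; hence by the proof of Lemma~\ref{lem:Ysh}(3) each mode $(1\otimes e^{\gamma^{(i)}})_0$ acts on the top weight space as a scalar multiple of $(\kappa^0,e_{\gamma^{(i)}})\in\ha{N}_{\sigma_1}$, which is semisimple by Remark~\ref{rem:NT}. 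The main obstacle is that a generator of $\Fa$ is a \emph{sum} of three such modes, which need not commute a priori; the crux is therefore to show that the group elements $(\kappa^0,e_{(\vp^j\gamma)^{(i)}})$, $j=0,1,2$, commute in $\ha{L}_{\sigma_1}$. I would settle this by evaluating the commutator map $c_0^{\sigma_1}$ of \eqref{eq:c0s} on a pair from the orbit, using $\pair{\vp^a\gamma}{\vp^b\gamma}=2$ for $a\equiv b\pmod 3$ and $-1$ otherwise; the weighted sum $\sum_{r=0}^{2}(3+2r)\pair{\vp^r\gamma}{\vp\gamma}=0$ vanishes modulo $6$. Since commuting semisimple operators are simultaneously diagonalizable, every linear combination of them, in particular each generator of $\Fa$, acts semisimply, and the identical computation with $\sigma_1^2$ handles $V_L(\sigma_1^2)_1$.
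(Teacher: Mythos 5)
Your proposal is correct and follows essentially the same route as the paper: the same decomposition into Cartan and root parts, the same use of Lemma~\ref{lem:Ysh}\,(1),(2) together with the membership criterion for $N$ to get the ideal property, and the same resolution of the key point for $\Fa$ — checking that $c_{0}^{\sigma_{1}}$ vanishes on pairs from a $\vp$-orbit so that the three group elements (hence the three semisimple $0$-modes) commute. The only difference is that you spell out some steps the paper dismisses as "easily checked."
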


\begin{proof}
We can easily check \eqref{eq:fixed}. 
Let us show the assertion for $\Fg^{(012)}$; 
we can show the assertions for $\Fg^{(35X)}$ and 
$\Fg^{(689)}$ similarly. 
It follows immediately from the definition of 
the vertex operator for $V_{L}$ 
(see \eqref{eq:Yh} and \eqref{eq:Ye}) that 
$\Fg^{(012)}$ is a Lie subalgebra 
isomorphic to the simple Lie algebra of type $A_{2}$, 
and 
\begin{equation*}
[\Fg^{(012)},\,\Fg^{(35X)}]=
[\Fg^{(012)},\,\Fg^{(689)}]=
[\Fg^{(012)},\,\Fa]=\bigl\{0\}.
\end{equation*}
Let us show that $[a,\,b]=a_{0}b=0$ 
for all $a \in \Fg^{(012)}$ and $b \in V_{L}(\sigma_{1})_{1}$. 
Assume that $a=h^{(012)}(-1) \otimes e^{0}$ 
for some $h \in A_{2} \otimes \BC$. 
Since $V_{L}(\sigma_{1})_{1}$ is the top weight space of 
$V_{L}(\sigma_{1})$ by \eqref{eq:rho}, 
it follows immediately from Lemma~\ref{lem:Ysh}\,(1), 
along with \eqref{eq:Yti}, that $a_{0}b=0$ for all 
$b \in V_{L}(\sigma_{1})_{1}$. 
Assume that 
$a=1 \otimes e^{\alpha^{(0)}}+1 \otimes e^{\alpha^{(1)}}+
1 \otimes e^{\alpha^{(2)}}$ for some $\alpha \in \Delta(A_{2})$. 
Notice that $\alpha^{(i)}$, $i=0,\,1,\,2$, is not contained in $N$ 
by \eqref{eq:h12}.
Thus we see from Lemma~\ref{lem:Ysh}\,(2), 
along with \eqref{eq:Yti}, that $a_{0}b=0$ for all 
$b \in V_{L}(\sigma_{1})_{1}$. 
Similarly, we can show that 
$[a,\,b]=a_{0}b=0$ for all $a \in \Fg^{(012)}$ and 
$b \in V_{L}(\sigma_{1}^{2})_{1}$. Thus we have proved 
that $\Fg^{(012)}$ is an ideal of $(\ti{V}_{L}^{\sigma_{1}})_{1}$. 

Now, it can be easily checked 
by the definition of the vertex operator for $V_{L}$ 
(see \eqref{eq:Ye}) that 
$\Fa$ is an abelian Lie subalgebra. 
Let (and fix) $\alpha \in \Delta(A_{2})$, 
and $i=\infty,\,4,\,7$. Note that $\vp^{r}(\alpha)^{(i)} \in N$ by \eqref{eq:h12}. 
Hence we see from 
Lemma~\ref{lem:Ysh}\,(3) that 
$(1 \otimes e^{\vp^{r}(\alpha)^{(i)}})_{0}$ is 
semisimple on $V(\sigma_{1})_{1}$ for each $r=0,\,1,\,2$. 
Since $\pair{\vp^{r}(\alpha)}{\vp^{r+1}(\alpha)}=-1$ 
for every $r=0,\,1,\,2$, it follows immediately 
from the definition \eqref{eq:c0s} of 
the commutator map $c_{0}^{\sigma_{1}}$ for $\ha{L}_{\sigma_{1}}$ that 
$c_{0}^{\sigma_{1}}\bigl(\vp^{r}(\alpha)^{(i)},\,\vp^{r+1}(\alpha)^{(i)}\bigr)=0$
for every $r=0,\,1,\,2$. 
Thus, $(\kappa^{0},\,e_{ \vp^{r}(\alpha)^{(i)} }) \in \ha{L}_{\sigma_{1}}$, 
$r=0,\,1,\,2$, commute with each other, and hence 
so are $(1 \otimes e^{\vp^{r}(\alpha)^{(i)}})_{0} \in 
\End_{\BC}(V(\sigma_{1})_{1})$, $r=0,\,1,\,2$, 
because $(1 \otimes e^{\vp^{r}(\alpha)^{(i)}})_{0}$ is identical to a scalar multiple of 
the action of $(\kappa^{0},\,e_{ \vp^{r}(\alpha)^{(i)} })$
on $V(\sigma_{1})_{1}$ (see the proof of Lemma~\ref{lem:Ysh}\,(3)). 
Therefore we conclude that $(1 \otimes e^{\alpha^{(i)}})_{0}+
(1 \otimes e^{\vp(\alpha)^{(i)}})_{0}+
(1 \otimes e^{\vp^{2}(\alpha)^{(i)}})_{0}$ 
is also semisimple on $V(\sigma_{1})_{1}$. Similarly, 
we can show that $(1 \otimes e^{\alpha^{(i)}})_{0}+
(1 \otimes e^{\vp(\alpha)^{(i)}})_{0}+
(1 \otimes e^{\vp^{2}(\alpha)^{(i)}})_{0}$ 
is semisimple also on $V(\sigma_{1}^{2})_{1}$.
This completes the proof of the lemma. 
\end{proof}

We see from Proposition~\ref{prop:DM} and Lemma~\ref{lem:fixed} 
that the Lie algebra $(\ti{V}_{L}^{\sigma_{1}})_{1}$ is semisimple. 
Also, we deduce from the definition that the levels of 
the simple components $\Fg^{(012)}$, $\Fg^{(35X)}$, and 
$\Fg^{(689)}$ of $(\ti{V}_{L}^{\sigma_{1}})_{1}$ are 
all equal to $3$. Since the dual Coxeter number of 
$A_{2}$ is equal to $3$, it follows immediately 
from \eqref{eq:level} that
\begin{equation*}
\frac{3}{3}=\frac{\dim (\ti{V}_{L}^{\sigma_{1}})_{1} - 24}{24}, \quad 
\text{and hence} \quad \dim (\ti{V}_{L}^{\sigma_{1}})_{1} = 48.
\end{equation*}
Therefore we have $\dim V(\sigma_{1})_{1}=\dim V(\sigma_{1}^{2})_{1}=9$ 
(see Remark~\ref{rem:dual}). 

\begin{rem}
We see from \eqref{eq:top} that $\dim V(\sigma_{1})_{1}=|N/R|^{1/2}$. 
Also it can be shown that $R=M=(1-\sigma_{1})L$. Using these facts, 
we can determine the dimension of $V(\sigma_{1})_{1}$ 
also by lattice theoretic method. 
\end{rem}

Set 
\begin{equation*}
\Fg:=\Fa \oplus 
 \underbrace{V(\sigma_{1})_{1}}_{=:\Fg_{1}} \oplus 
 \underbrace{V(\sigma_{1}^{2})_{1}}_{=:\Fg_{2}} 
\subset (\ti{V}_{L}^{\sigma_{1}})_{1}. 
\end{equation*}
By Remark~\ref{rem:Z3} and the argument above, 
we see that $\Fg$ is an ideal of the semisimple Lie algebra 
$(\ti{V}_{L}^{\sigma_{1}})_{1}$ satisfying the following conditions:

(i) $\Fg$ is a $(\BZ/3\BZ)$-graded, semisimple Lie algebra of dimension $24$;

(ii) $\Fa$ is an abelian subalgebra of dimension 6 whose (adjoint) actions on 
$\Fg_{1}$ and $\Fg_{2}$ are both semisimple.

%
\begin{prop} \label{prop:A2-3}
The Lie algebra $\Fg$ above is isomorphic to 
the semisimple Lie algebra of type $A_{2}^{3}$.
\end{prop}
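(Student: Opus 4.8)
The plan is to pin down $\Fg$ as a concrete $24$-dimensional semisimple Lie algebra and then identify its type by counting dimensions and exploiting the $(\BZ/3\BZ)$-grading together with the abelian subalgebra $\Fa$ acting semisimply. Since $A_{2}^{3}$ is the unique semisimple Lie algebra of dimension $24$ whose Cartan subalgebra has dimension $6$ (indeed $\dim \Fg = 24 = 3 \times 8$ and $\rank = 6 = 3 \times 2$, matching three copies of $A_{2}$), the first thing I would check is that no other semisimple type of dimension $24$ and rank $6$ arises. The constraint $\dim \Fg = 24$ forces, via the standard relation $\dim = \rank + \#(\text{roots})$, that $\#(\text{roots}) = 18$; among semisimple Lie algebras of rank $6$ this is quite restrictive, and $A_{2}^{3}$ (with $3 \times 6 = 18$ roots) is a natural candidate. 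The real work is to rule out competitors such as $A_{1}^{6}$ (which has dimension $18$, hence excluded), or $B_{2}$-type pieces, using the extra structural input.

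The key structural input is condition (ii): $\Fa$ is a $6$-dimensional abelian subalgebra acting semisimply on each graded piece $\Fg_{1} = V(\sigma_{1})_{1}$ and $\Fg_{2} = V(\sigma_{1}^{2})_{1}$, each of dimension $9$. First I would argue that $\Fa$ is in fact a Cartan subalgebra of $\Fg$: it is abelian of dimension equal to the expected rank $6$, its adjoint action on $\Fg = \Fa \oplus \Fg_{1} \oplus \Fg_{2}$ is semisimple (it acts trivially on itself and semisimply on $\Fg_{1}, \Fg_{2}$ by hypothesis), and its self-centralizer inside $\Fg$ can be shown to be $\Fa$ itself by verifying that no nonzero weight-zero vectors occur in $\Fg_{1}$ or $\Fg_{2}$. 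Granting this, the root-space decomposition of $\Fg$ with respect to $\Fa$ has all $18$ nonzero root spaces sitting inside $\Fg_{1} \oplus \Fg_{2}$, so each of $\Fg_{1}$ and $\Fg_{2}$ is a sum of $9$ root spaces. The $(\BZ/3\BZ)$-grading then constrains the root system: the bracket $[\Fg_{1}, \Fg_{1}] \subset \Fg_{2}$ and $[\Fg_{1}, \Fg_{2}] \subset \Fa$, which means the roots appearing in $\Fg_{1}$ (call their set $\Phi_{1}$) satisfy $\Phi_{1} + \Phi_{1} \subset \Phi_{2} \cup \{0\}$ with $-\Phi_{1} = \Phi_{2}$, a pattern characteristic of the grading on $A_{2}^{3}$ where each $A_{2}$ factor contributes its three positive roots to $\Phi_{1}$ and three negative roots to $\Phi_{2}$.

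From here I would identify the root system explicitly. The decomposition of $\Fa$ should reflect the three-block structure coming from the construction: $\Fa$ is spanned by the vectors indexed by $i = \infty, 4, 7$, which $\sigma_{1}$ fixes, and correspondingly $\Fa \cong (A_{2} \otimes \BC)^{\FS}$ splits as a sum of three two-dimensional pieces. Matching the weights of $\Fg_{1}$ against these three blocks, I expect each block to act with exactly the six nonzero weights of an $A_{2}$ root system (three in $\Fg_{1}$, three in $\Fg_{2}$), giving the orthogonal decomposition $\Phi = \Phi^{(1)} \sqcup \Phi^{(2)} \sqcup \Phi^{(3)}$ into three mutually orthogonal copies of the $A_{2}$ root system. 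Orthogonality of the three blocks, which forces the ideal decomposition $\Fg \cong \Fg_{X_{1}} \oplus \Fg_{X_{2}} \oplus \Fg_{X_{3}}$ into three simple ideals each of type $A_{2}$, should follow from the bracket relations inherited from the lattice VOA, specifically from the fact that root vectors supported on different $\sigma_{1}$-orbit-blocks have vanishing inner product and hence commute.

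The main obstacle, I expect, is the explicit weight computation: verifying that the $\Fa$-weights occurring in $\Fg_{1}$ are precisely the three positive roots (one block's worth, repeated across the three blocks) of an $A_{2}$ system, and that the Lie bracket reproduces the $A_{2}^{3}$ structure constants rather than a more entangled semisimple algebra. This requires controlling the $0$-th operators $(1 \otimes e^{\alpha})_{0}$ acting on the twisted module $V(\sigma_{1})_{1}$ and their commutators, using the twisted vertex operator formula \eqref{eq:Yse} together with the cocycle values $c_{0}^{\sigma_{1}}$. The earlier computation in Lemma~\ref{lem:fixed}, where $c_{0}^{\sigma_{1}}(\vp^{r}(\alpha)^{(i)}, \vp^{r+1}(\alpha)^{(i)}) = 0$ was established, gives exactly the kind of commutativity information needed, and I would lean on that template to determine the full bracket on $\Fg$. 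Once the root system is shown to be $A_{2} \sqcup A_{2} \sqcup A_{2}$, the isomorphism $\Fg \cong \Fg_{A_{2}}^{\oplus 3}$ follows from the standard classification of semisimple Lie algebras by their root systems.
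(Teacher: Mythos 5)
Your first half tracks the paper: by \cite[Lemma 8.1\,b)]{Kac} the centralizer of $\Fa$ in $\Fg$ is a Cartan subalgebra, and one checks that this centralizer meets $\Fg_{1}$ and $\Fg_{2}$ trivially (the paper does this with a short root-vector computation; you only assert it), so $\Fa$ is itself a Cartan subalgebra and $\rank \Fg=6$. The gap is in the second half. Your opening claim that $A_{2}^{3}$ is the \emph{unique} semisimple Lie algebra of dimension $24$ and rank $6$ is false: $B_{2}A_{2}A_{1}^{2}$ has dimension $10+8+3+3=24$ and rank $6$ (so does $A_{3}A_{1}^{3}$). You acknowledge that ``$B_{2}$-type pieces'' must be excluded, but you never exclude them. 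Your proposed route for doing so --- computing the joint $\Fa$-weights on the $9$-dimensional spaces $V(\sigma_{1})_{1}$ and $V(\sigma_{1}^{2})_{1}$ and matching them to three orthogonal $A_{2}$ root systems --- is left entirely as an ``expectation.'' Note moreover that $\Fa$ is spanned by sums of the $1\otimes e^{\alpha^{(i)}}$, not by Heisenberg elements $h(-1)1\otimes e^{0}$ (those kill the top level by Lemma~\ref{lem:Ysh}\,(1)), so this computation would require genuine control of the $\ha{N}_{\sigma_{1}}$-module $T$; it is considerably harder than what is actually needed, and nothing in your write-up carries it out.

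The paper closes the case distinction purely Lie-theoretically, using exactly the grading constraint you wrote down but did not exploit: since $\Fa=\Fg_{0}$ is the Cartan subalgebra, every root space lies in $\Fg_{1}$ or $\Fg_{2}$, so the degree gives a map from roots to $\BZ/3\BZ\setminus\{\ol{0}\}$ which is additive on pairs of roots whose sum is a root. For a simple ideal $\Fs$ of type $B_{2}$ with highest root $\theta=2\alpha_{1}+\alpha_{2}$ this is impossible: depending on the degrees of $\Fs_{\theta}$ and $\Fs_{-\alpha_{1}}$, either $[\Fs_{\theta},\Fs_{-\alpha_{1}}]=\Fs_{\alpha_{1}+\alpha_{2}}$ or $[[\Fs_{\theta},\Fs_{-\alpha_{1}}],\Fs_{-\alpha_{1}}]=\Fs_{\alpha_{2}}$ is a nonzero root space forced into $\Fg_{0}=\Fa$, a contradiction. (The same argument disposes of an $A_{3}$ factor.) To repair your proof you should replace the explicit weight computation by this, or an equivalent, elimination argument; as written, the identification of the type is not established.
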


\begin{proof}
We see by (ii) and \cite[Lemma 8.1\,b)]{Kac} 
that the centralizer $\Fz=\bigl\{x \in \Fg \mid 
[x,\,h]=0 \text{ for all $h \in \Fa$}\bigr\}$ of $\Fa$ 
is a Cartan subalgebra of $\Fg$. Notice that 
\begin{equation*}
\Fz=\Fa \oplus (\Fz \cap \Fg_{1}) \oplus (\Fz \cap \Fg_{2}).
\end{equation*}
Suppose that $\Fz \cap \Fg_{1} \ne \bigl\{0\bigr\}$. 
Let $h \in \Fz \cap \Fg_{1}$, and let $\alpha$ be a root of $\Fg$
such that $\alpha(h) \ne 0$. Take a nonzero root vector $x \in \Fg$ 
with respect to $\alpha$, and 
write it as: $x=x_{0}+x_{1}+x_{2}$ with $x_{0} \in \Fa$ and 
$x_{i} \in \Fg_{i}$, $i=1,\,2$; 
note that $[h,\,x_{0}]=0$. Then, 
\begin{equation*}
\alpha(h)(x_{0}+x_{1}+x_{2}) = \alpha(h)x = 
[h,\,x] = [h,\,x_{0}+x_{1}+x_{2}]=
 \underbrace{[h,\,x_{0}]}_{=0} +  
 \underbrace{[h,\,x_{1}]}_{\in \Fg_{2}} + 
 \underbrace{[h,\,x_{2}]}_{\in \Fa}.
\end{equation*}
Since $\alpha(h) \ne 0$, we get $x_{1}=0$. Also, 
since $0=[h,\,x_{1}]=\alpha(h)x_{2}$, and $\alpha(h) \ne 0$, 
we have $x_{2}=0$. Similarly, $x_{0}=0$. Thus we obtain $x=0$, 
which is a contradiction. Hence, 
$\Fz \cap \Fg_{1} = \bigl\{0\bigr\}$. Similarly, 
we can show that $\Fz \cap \Fg_{2} = \bigl\{0\bigr\}$. 
Therefore we conclude that $\Fa$ is a Cartan subalgebra of $\Fg$.  

Since the rank of $\Fg$ is equal to $\dim \Fa=6$, 
we deduce from (i) that 
$\Fg$ is isomorphic to the semisimple Lie algebra of type
$A_{2}^{3}$ or $B_{2}A_{2}A_{1}^{2}$.
Suppose that $\Fg$ is of type 
$B_{2}A_{2}A_{1}^{2}$.
Since $\Fg_{1}$ and $\Fg_{2}$ are stable under 
the adjoint action of the Cartan subalgebra $\Fa$, 
it follows that $\Fg_{1}$ and $\Fg_{2}$ are direct sums of 
root spaces of $\Fg$ with respect to the Cartan subalgebra $\Fa$, 
and each root space of $\Fg$ is contained in either $\Fg_{1}$ or $\Fg_{2}$. 
Let $\Fs$ be the ideal of $\Fg$ isomorphic to 
the simple Lie algebra $\Fg_{B_{2}}$ of type $B_{2}$, and 
let $\alpha_{1},\,\alpha_{2}$ be the simple roots 
for $\Fs \cong \Fg_{B_{2}}$ such that 
$\theta:=2\alpha_{1}+\alpha_{2}$ is the highest root 
of $\Fs \cong \Fg_{B_{2}}$. 
We suppose that $\Fs_{\theta} \subset \Fg_{1}$ 
(resp., $\Fs_{\theta} \subset \Fg_{2}$). 
If the root space $\Fs_{-\alpha_{1}}$ is 
contained in $\Fg_{2}$ (resp., $\Fg_{1}$), then we have
\begin{equation*}
\{0\} \ne \Fs_{\alpha_{1}+\alpha_{2}}=
 [\Fs_{\theta},\,\Fs_{-\alpha_{1}}] \subset \Fa
\end{equation*}
by (i), which is a contradiction. 
If $\Fs_{-\alpha_{1}} \subset \Fg_{1}$ 
(resp., $\Fs_{-\alpha_{1}} \subset \Fg_{2}$), 
then we have
\begin{equation*}
\{0\} \ne \Fs_{\alpha_{2}}=
 [[\Fs_{\theta},\,\Fs_{-\alpha_{1}}],\,\Fs_{-\alpha_{1}}] \subset \Fa
\end{equation*}
by (i), which is also a contradiction. 
Thus we have proved that 
$\Fg$ is not of type $B_{2}A_{2}A_{1}^{2}$, 
thereby completing the proof of the proposition. 
\end{proof}

Theorem~\ref{thm:main} follows immediately 
from Lemma~\ref{lem:fixed}, Proposition~\ref{prop:A2-3}, 
and \eqref{eq:level}. 

%
\section{Construction of a holomorphic VOA (2).}
\label{sec:hvoa2}

%
\subsection{Root lattice $D_{4}$ and its fixed-point-free automorphism of order $3$.} 
\label{subsec:D4}

Following \cite[Chapter~4, \S7.1]{CS}, 
we set
\begin{equation*}
D_{4}:=\bigl\{(x_{1},\,x_{2},\,x_{3},\,x_{4}) \in \BZ^{4} \mid 
x_{1}+x_{2}+x_{3}+x_{4} \in 2\BZ\bigr\};
\end{equation*}
\begin{equation*}
\Delta(D_{4}):=\bigl\{\alpha \in D_{4} \mid 
\pair{\alpha}{\alpha}=2\bigr\} \quad 
\text{(the set of roots in $D_{4}$)},
\end{equation*}
\begin{align*}
& [0]:=(0,\,0,\,0,\,0) \in D_{4}^{\ast}, & 
& [1]:=(1/2,\,1/2,\,1/2,\,1/2) \in D_{4}^{\ast}, \\
& [2]:=(0,\,0,\,0,\,1) \in D_{4}^{\ast}, & 
& [3]:=(1/2,\,1/2,\,1/2,\,-1/2) \in D_{4}^{\ast},
\end{align*}
where $D_{4}^{\ast} \subset D_{4} \otimes_{\BZ} \BR$ 
denotes the dual lattice of $D_{4}$. 
If we set $\ol{[\ell]}:=[\ell]+D_{4}$ for $\ell=0,\,1,\,2,\,3$, 
then we have 
$D_{4}^{\ast}/D_{4}=\bigl\{\ol{[\ell]} \mid \ell=0,\,1,\,2,\,3\bigr\}$, 
and the additive group $D_{4}^{\ast}/D_{4}$ is naturally isomorphic to 
$\BZ/2\BZ \times \BZ/2\BZ$; in fact, we have $\ol{[\ell]}+\ol{[\ell]}=\ol{[0]}$ 
for all $\ell=0,\,1,\,2,\,3$, and $\ol{[1]}+\ol{[2]}=\ol{[3]}$. 

We define a linear automorphism $\vp=\vp_{D_{4}}$ of $D_{4} \otimes \BR$ by: 
\begin{align*}
& (1,0,0,0) \mapsto \frac{1}{2}(-1,\,1,\,1,\,1), &
& (0,1,0,0) \mapsto \frac{1}{2}(-1,\,-1,\,1,\,-1), \\[1mm]
& (0,0,1,0) \mapsto \frac{1}{2}(-1,\,-1,\,-1,\,1), & 
& (0,0,0,1) \mapsto \frac{1}{2}(-1,\,1,\,-1,\,-1);
\end{align*}
we deduce that the restriction of $\vp$ to $D_{4}$ (resp., $D_{4}^{\ast}$) 
is a lattice automorphism of order $3$ of $D_{4}$ (resp., $D_{4}^{\ast}$), 
which is fixed point free on $D_{4}$ (resp., $D_{4}^{\ast}$). 
Remark that 
%
%
\begin{equation} \label{eq:vpD4}
\vp(\ol{[0]})=\ol{[0]}, \quad
\vp(\ol{[1]})=\ol{[2]}, \quad
\vp(\ol{[2]})=\ol{[3]}, \quad
\vp(\ol{[3]})=\ol{[1]}.
\end{equation}

%
\subsection{Niemeier lattice $\Ni(D_{4}^{6})$ and 
its automorphism $\sigma_{2}$ of order $3$.}
\label{subsec:Nie2}

The Niemeier lattice $L=\Ni(D_{4}^{6})$ with $Q=D_{4}^{6}$ the root lattice 
is, by definition (see \cite[Chapter 16, Table 16.1]{CS}), 
the sublattice of $Q^{\ast}=(D_{4}^{\ast})^{6}$ generated by 
$Q$, $[111111]$, $[222222]$,%
\footnote{There seems to be a typo in the row of $D_{4}^{6}$ on 
\cite[Chapter 16, Table 16.1]{CS}; $[222222]$ should be added to the column 
``Generators for glue code''.} and 
\begin{equation*}
[0(02332)]=
\bigl\{[002332],\,[023320],\,[033202],\,[032023],\,[020233]\bigr\},
\end{equation*}
where $[a_{1} \cdots a_{6}]:=([a_{1}],\,\dots,\,[a_{6}]) \in 
Q^{\ast}=(D_{4}^{\ast})^{6}$.

Let us define $\sigma_{2}:Q^{\ast} \rightarrow Q^{\ast}$ by: 
$\sigma_{2}=\vp^{\oplus 6}$, 
that is, $\sigma_{2}(\gamma_{1},\,\dots,\,\gamma_{6})=
(\vp(\gamma_{1}),\,\dots,\,\vp(\gamma_{6}))$. 
Then, $L \subset Q^{\ast}$ is stable under the action of $\sigma_{2}$. 
Indeed, it is obvious that $\sigma_{2}(Q) \subset Q$. 
Also, $\sigma_{2}([111111]) \in [222222]+Q \subset L$, and 
$\sigma_{2}([222222]) \in [333333]+Q = [111111]+[222222]+Q \subset L$. 
In addition,
\begin{equation*}
\sigma_{2}([002332]) \in [003113]+Q = 
 [020233] + [023320] + Q \subset L. 
\end{equation*}
Similarly, we can show that $\sigma_{2}([0(02332)]) \subset L$. 
Therefore, $\sigma_{2}$ is a lattice automorphism of $L$ of order $3$. 
Because $\vp$ is fixed point free on $D_{4}^{\ast}$, it follows that 
$\sigma_{2}$ is fixed point free on $L$, which implies that
$\rank L^{\sigma_{2}}=0$, and hence 
%
%
\begin{equation} \label{eq:rho2}
\rho=\frac{1}{18}(\dim \Fh_{(1)} + \dim \Fh_{(2)})=
\frac{1}{18}(24-0)=\frac{4}{3}. 
\end{equation}
Thus we can apply Theorem~\ref{thm:M} 
to $L=\Ni(D_{4}^{6})$ and the $\sigma_{2}$ above, and 
obtain a $C_{2}$-cofinite, holomorphic VOA $\ti{V}_{L}^{\sigma_{2}}$
of central charge $24$; note that 
$\dim (\ti{V}_{L}^{\sigma_{2}})_{0}=\dim (V_{L}^{\sigma_{2}})_{0}=1$. 
We are ready to state the main result in this section. 
%
%
\begin{thm} \label{thm:main2}
Keep the notation and setting above. 
For $L=\Ni(D_{4}^{6})$ and the $\sigma_{2}$ above, 
the Lie algebra $(\ti{V}_{L}^{\sigma_{2}})_{1}$ is of type $A_{2,3}^{6}$. 
Therefore, $\ti{V}_{L}^{\sigma_{2}}$ corresponds to 
No.\,6 on Schellekens' list \cite[Table 1]{Sch}.  
\end{thm}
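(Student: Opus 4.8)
The plan is to follow the same strategy used in the proof of Theorem~\ref{thm:main}, adapting each step to the lattice $\Ni(D_4^6)$ and the fixed-point-free automorphism $\sigma_2=\vp_{D_4}^{\oplus 6}$. The overall architecture has three parts: first determine the structure of the fixed-point subalgebra $(V_L^{\sigma_2})_1$; second pin down the dimensions of the twisted pieces $V_L(\sigma_2)_1$ and $V_L(\sigma_2^2)_1$ via the level formula \eqref{eq:level}; and third identify the full Lie algebra $(\ti V_L^{\sigma_2})_1$ as being of type $A_{2,3}^6$ by combining an explicit description of the fixed-point ideals with an abstract rank-and-grading argument like that in Proposition~\ref{prop:A2-3}.

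First I would analyze $(V_L^{\sigma_2})_1$. Because $\sigma_2$ is fixed point free on $L$, we have $\Fh_{(0)}=\{0\}$, so there are no Cartan contributions $h(-1)1\otimes e^0$ and the fixed-point weight-one space is spanned purely by $\sigma_2$-invariant combinations $1\otimes e^{\alpha}+1\otimes e^{\vp\alpha}+1\otimes e^{\vp^2\alpha}$ for roots $\alpha\in\Delta(L)$. Since $\vp_{D_4}$ acts without fixed points on $\Delta(D_4)$ (the $24$ roots split into $8$ orbits of size $3$) and $Q=D_4^6$ has $6$ components, I expect $(V_L^{\sigma_2})_1$ to be an abelian Lie algebra; indeed with $\Fh_{(0)}=\{0\}$, all roots $\alpha^{(i)}$ lie outside $N$ or interact trivially, and by an argument parallel to Lemma~\ref{lem:fixed} the invariant vectors should commute. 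The key difference from $\Ni(A_2^{12})$ is that here the fixed-point part contributes no semisimple ideal at all, so the entire semisimple structure of type $A_{2,3}^6$ must emerge from the interaction between $(V_L^{\sigma_2})_1$ and the twisted modules.

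Next I would compute dimensions. Using $\rho=4/3$ from \eqref{eq:rho2} and \eqref{eq:top}, the top weight space of each twisted module sits in weight $4/3$, so the integrally-graded weight-one contribution $V_L(\sigma_2)_1$ (and its conjugate $V_L(\sigma_2^2)_1$) comes from weight $\rho+(-1/3)$ levels above the top; I would use the character/dimension data of the twisted Fock space $M(1)[\sigma_2]$ together with $\dim T=|N/R|^{1/2}$ to extract $\dim V_L(\sigma_2)_1$. Assuming the target is type $A_{2,3}^6$ with total dimension $48$, the level formula \eqref{eq:level} with $h^\vee_{A_2}=3$ and $k=3$ forces $\dim(\ti V_L^{\sigma_2})_1=48$, which then determines the twisted-piece dimensions by subtracting $\dim(V_L^{\sigma_2})_1$. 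The main obstacle I anticipate is the explicit determination of these twisted-module dimensions and the verification that the bracket on $(\ti V_L^{\sigma_2})_1$ genuinely assembles six copies of $A_2$ rather than some other semisimple type of the same rank and dimension.

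Finally, for the type identification I would mimic Proposition~\ref{prop:A2-3}: show that $(\ti V_L^{\sigma_2})_1$ is semisimple via Proposition~\ref{prop:DM}, locate a Cartan subalgebra, compute the rank, and rule out competing semisimple Lie algebras of the same dimension and rank by exploiting the $(\BZ/3\BZ)$-grading together with the level constraint \eqref{eq:level} (which, once the level of each simple component is shown to be $3$ and the dual Coxeter number matches, uniquely pins the type once root-system combinatorics exclude the alternatives). Because the fixed-point part is abelian here, the rank will have to be read off from a maximal toral subalgebra spread across the graded pieces, and the hardest step is expected to be establishing that each simple component has level exactly $3$ and dual Coxeter number $3$ simultaneously, i.e., genuinely type $A_2$, rather than, say, a configuration with a component of a different type that accidentally satisfies \eqref{eq:level}; the $(\BZ/3\BZ)$-grading argument analogous to the $B_2$-exclusion in Proposition~\ref{prop:A2-3} will be the crucial device for eliminating such possibilities.
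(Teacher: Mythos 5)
Your overall architecture inverts the actual situation, and two of your central claims are false. First, the twisted modules contribute \emph{nothing} to the weight one space here: by \eqref{eq:rho2} the top weight of $V_{L}(\sigma_{2})$ and of $V_{L}(\sigma_{2}^{2})$ is $\rho=4/3$, and by \eqref{eq:wt} all weights lie in $\rho+(1/3)\BZ_{\ge 0}=\{4/3,\,5/3,\,2,\dots\}$, so there is no weight strictly below the top; in particular $V_{L}(\sigma_{2})_{1}=V_{L}(\sigma_{2}^{2})_{1}=\{0\}$ and $(\ti{V}_{L}^{\sigma_{2}})_{1}=(V_{L}^{\sigma_{2}})_{1}$. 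Your plan to ``extract $\dim V_{L}(\sigma_{2})_{1}$ from the character of $M(1)[\sigma_{2}]$'' would simply return $0$, and the entire third part of your argument (a Cartan subalgebra spread across the graded pieces, an exclusion argument \`a la Proposition~\ref{prop:A2-3}) has nothing to act on.

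Second, and consequently fatal, your claim that $(V_{L}^{\sigma_{2}})_{1}$ is abelian is wrong; it is the whole answer. Since $\sigma_{2}$ preserves each simple component $\Fg^{(i)}\cong\Fg_{D_{4}}$ of $(V_{L})_{1}$ and acts on it as the automorphism induced by $\vp$, one gets $(V_{L}^{\sigma_{2}})_{1}\cong(\Fg_{D_{4}}^{\vp})^{\oplus 6}$. The invariant vectors $1\otimes e^{\alpha^{(i)}}+1\otimes e^{\vp(\alpha)^{(i)}}+1\otimes e^{\vp^{2}(\alpha)^{(i)}}$ do \emph{not} commute: whenever $\pair{\alpha}{\beta}=-1$ the bracket produces a nonzero multiple of $1\otimes e^{(\alpha+\beta)^{(i)}}$-type terms. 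More structurally, the fixed-point subalgebra of a simple Lie algebra under a finite-order (hence semisimple) automorphism is reductive, and a reductive abelian subalgebra is toral, so its dimension would be bounded by $\rank D_{4}=4$, whereas $\dim\Fg_{D_{4}}^{\vp}=0+24/3=8$. The paper's proof finishes by noting that $(\ti{V}_{L}^{\sigma_{2}})_{1}$, being $48$-dimensional, is semisimple by Proposition~\ref{prop:DM}, hence so is $\Fg_{D_{4}}^{\vp}$, and $\Fg_{A_{2}}$ is the unique semisimple Lie algebra of dimension $8$; the level $3$ then follows from \eqref{eq:level}. You would need to discard your proposed route and adopt this one.
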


\begin{proof}
By \eqref{eq:rho2} and \eqref{eq:wt}, we see that 
both of the top weights of $V(\sigma_{2})$ and $V(\sigma_{2}^{2})$ 
are equal to $4/3$. Hence, 
$(\ti{V}_{L}^{\sigma_{2}})_{1}=(V_{L}^{\sigma_{2}})_{1}$. 
So, let us determine the Lie algebra structure of 
$(V_{L}^{\sigma_{2}})_{1}$. For each $h \in D_{4} \otimes_{\BZ} \BC$ 
and $1 \le i \le 6$, we define $h^{(i)}$  to be the element 
$(h_{1},\,\dots,\,h_{6}) \in \Fh=L \otimes_{\BZ} \BC = 
(D_{4} \otimes_{\BZ} \BC)^{6}$ with $h_{i}=h$ and $h_{j}=0$ 
for all $1 \le j \le 6$, $j \ne i$. Define $\Fg^{(i)}$ to be 
the Lie subalgebra of $(V_{L})_{1}$ generated by 
$\bigl\{h^{(i)}(-1)1 \otimes e^{0} \mid h \in D_{4} \otimes_{\BZ} \BC\bigr\}$ 
and $\bigl\{1 \otimes e^{\alpha^{(i)}} \mid \alpha \in \Delta(D_{4})\bigr\}$. 
Then we have 
\begin{equation*}
(V_{L})_{1} \cong 
 \Fg^{(1)} \oplus \cdots \oplus \Fg^{(6)} \quad \text{with} \quad 
 \Fg^{(i)} \cong \Fg_{D_{4}} \quad \text{for every $1 \le i \le 6$}.
\end{equation*}
It can be easily seen that $\sigma_{2} \in \Aut(V_{L})$ preserves 
each simple component $\Fg^{(i)}$ ($\cong \Fg_{D_{4}}$) of $(V_{L})_{1}$; 
indeed, for each $1 \le i \le 6$, we see that 
\begin{equation*}
\begin{cases}
\sigma_{2}(h^{(i)}(-1)1 \otimes e^{0})=
(\vp(h))^{(i)}(-1)1 \otimes e^{0} & 
 \text{for $h \in D_{4} \otimes_{\BZ} \BC$}; \\[1.5mm]
\sigma_{2}(1 \otimes e^{\alpha^{(i)}})=1 \otimes e^{\vp(\alpha)^{(i)}} & 
 \text{for $\alpha \in \Delta(D_{4})$}.
\end{cases}
\end{equation*}
Thus the restriction of $\sigma_{2}$ to a simple component 
$\Fg^{(i)} \cong \Fg_{D_{4}}$ is identical to 
the Lie algebra automorphism, denoted also by $\vp$, of $\Fg_{D_{4}}$ 
induced from $\vp \in \Aut(D_{4})$. Therefore, 
\begin{equation*}
(\ti{V}_{L}^{\sigma_{2}})_{1} = 
(V_{L}^{\sigma_{2}})_{1} = 
(\Fg^{(1)})^{\sigma_{2}} \oplus \cdots \oplus (\Fg^{(6)})^{\sigma_{2}} \cong 
(\Fg_{D_{4}}^{\vp})^{\oplus 6}.
\end{equation*}
Because $\vp$ is fixed point free on the root lattice $D_{4}$, 
we see that $\dim \Fg_{D_{4}}^{\vp} = 0 + 24/3=8$ 
(recall that the number of roots of $D_{4}$ is equal to $24$). 
Thus we have $\dim (\ti{V}_{L}^{\sigma_{2}})_{1} = 8 \times 6 = 48$. 
Therefore it follows from Proposition~\ref{prop:DM} 
that $(\ti{V}_{L}^{\sigma_{2}})_{1}  \cong 
(\Fg_{D_{4}}^{\vp})^{\oplus 6}$ is semisimple, and hence 
so is $\Fg_{D_{4}}^{\vp}$. 
Observe that $\Fg_{A_{2}}$ is 
a unique semisimple Lie algebra of dimension $8$. 
Thus, $\Fg_{D_{4}}^{\vp}$ is of type $A_{2}$, 
and hence $(\ti{V}_{L}^{\sigma_{2}})_{1}$ is of type $A_{2}^{6}$. 
Since $\dim (\ti{V}_{L}^{\sigma_{2}})_{1} = 48$ as seen above, 
we see from \eqref{eq:level} that 
$(\ti{V}_{L}^{\sigma_{2}})_{1}$ is of type $A_{2,3}^{6}$, as desired. 
\end{proof}

%
\section{Construction of a holomorphic VOA (3).}
\label{sec:hvoa3}

%
\subsection{Dynkin diagram automorphism for $D_{4}$.}
\label{subsec:DynkinD4}

We use the notation and setting 
for the root lattice $D_{4}$ introduced in \S\ref{subsec:D4}.
Let $\omega$ be a Dynkin diagram automorphism of $D_{4}$ of order $3$. 
Then, $\omega$ acts on $D_{4}$ as follows: 
\begin{align*}
& \omega(1,\,-1,\,0,\,0)=(0,\,0,\,1,\,-1), & 
& \omega(0,\,0,\,1,\,-1)=(0,\,0,\,1,\,1), \\
& \omega(0,\,0,\,1,\,1)=(1,\,-1,\,0,\,0), & 
& \omega(0,\,1,\,-1,\,0)=(0,\,1,\,-1,\,0);
\end{align*}
we can easily check that 
\begin{equation*}
\omega(\ol{[0]})=\ol{[0]}, \quad
\omega(\ol{[1]})=\ol{[2]}, \quad
\omega(\ol{[2]})=\ol{[3]}, \quad
\omega(\ol{[3]})=\ol{[1]}.
\end{equation*}

%
\subsection{Niemeier lattice $\Ni(D_{4}^{6})$ and 
its automorphism $\sigma_{3}$ of order $3$.}
\label{subsec:Nie3}

Keep the notation and setting for 
the Niemeier lattice $L=\Ni(D_{4}^{6})$ 
with $Q=D_{4}^{6}$ the root lattice in \S\ref{subsec:Nie2}.
Let us define $\sigma_{3}:Q^{\ast} \rightarrow Q^{\ast}$ by: 
\begin{equation*}
\sigma_{3}(
 \gamma_{1},\,\gamma_{2},\,\gamma_{3},\,
 \gamma_{4},\,\gamma_{5},\,\gamma_{6})=
(
 \vp(\gamma_{1}),\,\vp(\gamma_{2}),\,\vp(\gamma_{3}),\,
 \omega(\gamma_{4}),\,\omega(\gamma_{5}),\,\omega(\gamma_{6})).
\end{equation*}
We can show in exactly the same way as for $\sigma_{2}$ 
in \S\ref{subsec:Nie2} that $L \subset Q^{\ast}$ is 
stable under the action of $\sigma_{3}$. 
Therefore, $\sigma_{3}$ gives a lattice automorphism of $L$ of order $3$; 
since $\rank D_{4}^{\vp} = 0$ and $\rank D_{4}^{\omega} = 2$, it follows that 
$\rank L^{\sigma_{3}}=2 \times 3=6$, and hence 
%
%
\begin{equation} \label{eq:rho3}
\rho=\frac{1}{18}(\dim \Fh_{(1)} + \dim \Fh_{(2)})=
\frac{1}{18}(24-6)=1. 
\end{equation}
Thus we can apply Theorem~\ref{thm:M} 
to $L=\Ni(D_{4}^{6})$ and the $\sigma_{3}$ above, and 
obtain a $C_{2}$-cofinite, holomorphic VOA $\ti{V}_{L}^{\sigma_{3}}$
of central charge $24$; note that 
$\dim (\ti{V}_{L}^{\sigma_{3}})_{0}=\dim (V_{L}^{\sigma_{3}})_{0}=1$. 
We are ready to state the main result in this section. 
%
%
\begin{thm} \label{thm:main3}
Keep the notation and setting above. 
For $L=\Ni(D_{4}^{6})$ and the $\sigma_{3}$ above, 
the Lie algebra $(\ti{V}_{L}^{\sigma_{3}})_{1}$ 
is of type $E_{6,3}G_{2,1}^{3}$. 
Therefore, $\ti{V}_{L}^{\sigma_{3}}$ corresponds to 
No.\,32 on Schellekens' list \cite[Table 1]{Sch}.  
\end{thm}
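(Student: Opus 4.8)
The plan is to compute the weight-one space $(\ti{V}_{L}^{\sigma_{3}})_{1}$ by decomposing it into its three graded pieces according to Remark~\ref{rem:Z3}, namely $(V_{L}^{\sigma_{3}})_{1} \oplus V_{L}(\sigma_{3})_{1} \oplus V_{L}(\sigma_{3}^{2})_{1}$, and then identify the resulting semisimple Lie algebra. Since $\rho = 1$ by \eqref{eq:rho3}, both $V_{L}(\sigma_{3})_{1}$ and $V_{L}(\sigma_{3}^{2})_{1}$ coincide with the respective top weight spaces $(V_{L}(\sigma_{3}))_{\rho}$, which by \eqref{eq:top} have dimension $|N/R|^{1/2}$. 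So the twisted pieces contribute nontrivially here, in contrast with Theorem~\ref{thm:main2} where $\rho = 4/3$ killed them.

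First I would analyze the fixed-point subalgebra $(V_{L}^{\sigma_{3}})_{1}$. Because $\sigma_{3}$ acts on the six $D_{4}$-components as $\vp$ on the first three and as the diagram automorphism $\omega$ on the last three, the decomposition $(V_{L})_{1} \cong \Fg^{(1)} \oplus \cdots \oplus \Fg^{(6)}$ with each $\Fg^{(i)} \cong \Fg_{D_{4}}$ is preserved, and the fixed points split as $(\Fg_{D_{4}}^{\vp})^{\oplus 3} \oplus (\Fg_{D_{4}}^{\omega})^{\oplus 3}$. As computed in the proof of Theorem~\ref{thm:main2}, $\Fg_{D_{4}}^{\vp}$ is of type $A_{2}$ (dimension $8$), since $\vp$ is fixed-point-free. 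For the diagram automorphism $\omega$, the fixed subalgebra $\Fg_{D_{4}}^{\omega}$ is well known to be $\Fg_{G_{2}}$ (dimension $14$), reflecting the folding of the $D_{4}$ Dynkin diagram to $G_{2}$. Thus $(V_{L}^{\sigma_{3}})_{1}$ has dimension $3 \times 8 + 3 \times 14 = 66$.

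Next I would account for the twisted sectors. The total dimension is dictated by \eqref{eq:level}: the target type $E_{6,3}G_{2,1}^{3}$ has $\dim = 78 + 3 \times 14 = 120$, so each twisted piece must contribute $(120 - 66)/2 = 27$; equivalently I would verify $|N/R|^{1/2} = 27$ by a lattice-theoretic computation of $N$ and $R$ as in \eqref{eq:N} and \eqref{eq:RM}, analogous to the remark following Theorem~\ref{thm:main}. The structural claim is then that the three $A_{2}$'s coming from the $\vp$-components, together with the two $27$-dimensional twisted sectors, assemble into a single copy of $E_{6}$: indeed $E_{6}$ restricted to its natural $A_{2}^{3}$ subalgebra decomposes as $A_{2}^{3}$ plus two $27$-dimensional modules $(3,3,3) \oplus (\ol 3,\ol 3,\ol 3)$, matching $3 \times 8 + 2 \times 27 = 24 + 54 = 78$. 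I would prove that the $\ad$-action of the $A_{2}^{3}$ part on the twisted sectors realizes precisely these modules, using Lemma~\ref{lem:Ysh}\,(3) (semisimplicity of the relevant $0$-th operators) and the bracket formula $[a,b] = a_{0}b$, combined with an argument in the spirit of Proposition~\ref{prop:A2-3} to rule out alternative rank-$6$ semisimple types of dimension $78$.

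The hard part will be confirming that the $G_{2}^{3}$ coming from the $\omega$-components stays decoupled as an ideal while the $A_{2}^{3}$ together with the twisted sectors fuses into $E_{6}$, rather than mixing across the two families. This requires showing that the $0$-th operators $(1 \otimes e^{\alpha})_{0}$ for roots $\alpha$ supported on the $\omega$-components annihilate the twisted top weight spaces, which I expect follows from a careful application of Lemma~\ref{lem:Ysh}\,(2) together with the observation that such $\alpha$ lie outside $N$ (or act trivially on $T$). Once the decoupling and the $E_{6}$-module structure are established, the level assignments $E_{6,3}$ and $G_{2,1}$ follow from \eqref{eq:level} applied componentwise, yielding type $E_{6,3}G_{2,1}^{3}$ and hence No.\,32 on Schellekens' list.
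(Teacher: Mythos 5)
Your computation of the fixed--point part $(V_{L}^{\sigma_{3}})_{1}\cong(\Fg_{D_{4}}^{\vp})^{\oplus3}\oplus(\Fg_{D_{4}}^{\omega})^{\oplus3}\cong \Fg_{A_2}^{\oplus3}\oplus\Fg_{G_2}^{\oplus3}$ and your decoupling argument for the $G_2$-components (every $\alpha^{(i)}$ with $i=4,5,6$ lies in $\Delta\setminus N$, so Lemma~\ref{lem:Ysh}\,(1),(2) kills the bracket with the twisted sectors) agree with the paper. But there are two genuine gaps. First, your dimension count is circular as primarily stated: you read $\dim(\ti{V}_{L}^{\sigma_{3}})_{1}=120$ off the \emph{target} type. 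The fallback of computing $|N/R|^{1/2}=27$ lattice-theoretically is legitimate but is a nontrivial computation you do not perform. The paper instead observes that the decoupled ideal $(\Fg^{(i)})^{\sigma_3}\cong\Fg_{G_2}$ has level $1$ --- because the long-root vectors are the untwisted vectors $1\otimes e^{\alpha^{(i)}}$ with $\omega(\alpha)=\alpha$ --- and then \eqref{eq:level} with $h^{\vee}_{G_2}=4$ gives $\dim=120$ noncircularly. You never determine the level of the $G_2$ ideal, yet you need it (or the $|N/R|$ computation) before \eqref{eq:level} can tell you anything.

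Second, and more seriously, your route to identifying $\Fg=\Fg_{A_2}^{\oplus3}\oplus\Fg_1\oplus\Fg_2$ as $E_6$ is not what the paper does and is much harder than you acknowledge. Realizing $\Fg_1$ and $\Fg_2$ as the $A_2^3$-modules $(3,3,3)$ and $(\ol3,\ol3,\ol3)$ requires explicit control of the projective $\ha{N}_{\sigma}$-module $T$ and of the $0$-th operators on it; nothing in the paper's toolkit (Lemma~\ref{lem:Ysh} gives only semisimplicity, not weights) delivers this. Your fallback of ``ruling out alternative rank-$6$ types of dimension $78$'' presupposes that $\rank\Fg=6$, which itself needs the centralizer argument of Proposition~\ref{prop:A2-3} adapted to a nonabelian $\Fg_0$, and even then you must invoke $h^{\vee}\in4\BZ$ (from \eqref{eq:level}) to eliminate $B_6$ and $C_6$. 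The paper avoids all of this: it lists the dimension-$78$ semisimple types with $h^{\vee}\in4\BZ$, namely $E_6$, $A_7A_3$, $C_3^3A_3$ (no rank restriction a priori), and then applies Kac's classification of finite-order automorphisms to the order-$3$ grading automorphism $f$ of $\Fg$. A short argument (Claim~\ref{c:kappa}) shows the diagram-automorphism component of $f$ must be trivial --- otherwise $\Fg_0$ would contain a $C_3$ ideal, contradicting $\Fg_0\cong\Fg_{A_2}^{\oplus3}$ --- whence $\Fg_0$ contains a Cartan subalgebra of $\Fg$, forcing $\rank\Fg\le6$ and hence type $E_6$. This automorphism-theoretic step is the key idea your proposal is missing.
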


\begin{proof}[Proof of Theorem~\ref{thm:main3}]
First, let us determine the Lie algebra structure of 
$(V_{L}^{\sigma_{3}})_{1}$. 
We use the notation in the proof of Theorem~\ref{thm:main2}; 
recall that 
\begin{equation*}
(V_{L})_{1} \cong 
 \Fg^{(1)} \oplus \cdots \oplus \Fg^{(6)} \quad \text{with} \quad 
 \Fg^{(i)} \cong \Fg_{D_{4}} \quad \text{for every $1 \le i \le 6$}. 
\end{equation*}
It can be easily seen that $\sigma_{3} \in \Aut(V_{L})$ preserves 
each simple component $\Fg^{(i)}$ ($\cong \Fg_{D_{4}}$) of $(V_{L})_{1}$; 
indeed, for each $i=1,\,2,\,3$, 
\begin{equation*}
\begin{cases}
\sigma_{3}(h^{(i)}(-1)1 \otimes e^{0})=
(\vp(h))^{(i)}(-1)1 \otimes e^{0} & 
 \text{for $h \in D_{4} \otimes_{\BZ} \BC$}; \\[1.5mm]
\sigma_{3}(1 \otimes e^{\alpha^{(i)}})=1 \otimes e^{\vp(\alpha)^{(i)}} & 
 \text{for $\alpha \in \Delta(D_{4})$}, 
\end{cases}
\end{equation*}
and for each $i=4,\,5,\,6$, 
\begin{equation*}
\begin{cases}
\sigma_{3}(h^{(i)}(-1)1 \otimes e^{0})=
(\omega(h))^{(i)}(-1)1 \otimes e^{0} & 
 \text{for $h \in D_{4} \otimes_{\BZ} \BC$}; \\[1.5mm]
\sigma_{3}(1 \otimes e^{\alpha^{(i)}})=1 \otimes e^{\omega(\alpha)^{(i)}} & 
 \text{for $\alpha \in \Delta(D_{4})$}.
\end{cases}
\end{equation*}
Thus the restriction of $\sigma_{3}$ to a simple component 
$\Fg^{(i)} \cong \Fg_{D_{4}}$ for $i=1,\,2,\,3$ (resp., for $i=4,\,5,\,6$) 
is identical to the Lie algebra automorphism, denoted also by $\vp$ (resp., $\omega$), 
of $\Fg_{D_{4}}$ induced from $\vp \in \Aut(D_{4})$ 
(resp., the Dynkin diagram automorphism $\omega$). 
Therefore, 
\begin{equation*}
(V_{L}^{\sigma_{3}})_{1} = 
(\Fg^{(1)})^{\sigma_{3}} \oplus \cdots \oplus (\Fg^{(6)})^{\sigma_{3}} \cong 
(\Fg_{D_{4}}^{\vp})^{\oplus 3} \oplus (\Fg_{D_{4}}^{\omega})^{\oplus 3}.
\end{equation*}
We see from the proof of Theorem~\ref{thm:main2} that
$\Fg_{D_{4}}^{\vp}$ is isomorphic to 
the simple Lie algebra $\Fg_{A_{2}}$ of type $A_{2}$. 
Also, it is well-known (see, e.g., \cite[p.128, Case 4]{Kac}) 
that $\Fg_{D_{4}}^{\omega}$ is isomorphic to 
the simple Lie algebra $\Fg_{G_{2}}$ of type $G_{2}$. 

Now, fix $i=4,\,5,\,6$. The Lie algebra 
$(\Fg^{(i)})^{\sigma_{3}} \cong \Fg_{D_{4}}^{\omega} \cong \Fg_{G_{2}}$ 
is generated by: 
\begin{align*}
& \bigl\{h^{(i)}(-1)1 \otimes e^{0} \mid 
    h \in D_{4} \otimes \BC,\,\omega(h)=h\bigr\}, \\
& \bigl\{
    1 \otimes e^{\alpha^{(i)}} + 
    1 \otimes e^{\omega(\alpha)^{(i)}} + 
    1 \otimes e^{\omega^{2}(\alpha)^{(i)}} \mid 
  \alpha \in \Delta(D_{4}),\,\omega(\alpha) \ne \alpha\bigr\}, 
  \quad \text{and} \\
& \bigl\{
    1 \otimes e^{\alpha^{(i)}} \mid 
  \alpha \in \Delta(D_{4}), \omega(\alpha) = \alpha\bigr\}.
\end{align*}
Because $\alpha^{(i)} \in \Delta \setminus N$ for all 
$\alpha \in \Delta(D_{4})$, it follows from Lemma~\ref{lem:Ysh}\,(1) and (2), 
along with \eqref{eq:Yti}, that 
$[a,\,b]=a_{0}b=0$ for all $a \in (\Fg^{(i)})^{\sigma_{3}}$ and 
$b \in V(\sigma_{3})_{1}$. Similarly, we can check that 
$[a,\,b]=a_{0}b=0$ for all $a \in (\Fg^{(i)})^{\sigma_{3}}$ and 
$b \in V(\sigma_{3}^{2})_{1}$. Thus, $(\Fg^{(i)})^{\sigma_{3}} \cong \Fg_{G_{2}}$ 
is a (simple) ideal of $(\ti{V}_{L}^{\sigma_{3}})_{1}$. 
Because $\bigl\{
    1 \otimes e^{\alpha^{(i)}} \mid 
  \alpha \in \Delta(D_{4}), \omega(\alpha) = \alpha\bigr\}$ correspond to 
the root vectors for long roots of $G_{2}$, we deduce that 
the level of $(\Fg^{(i)})^{\sigma_{3}}$ is equal to $1$. Therefore, 
$(\Fg^{(i)})^{\sigma_{3}}$ is of type $G_{2,1}$. Since the dual Coxeter number of 
$G_{2}$ is equal to $4$, we see by \eqref{eq:level} that 
\begin{equation*}
\frac{4}{1}=
\frac{\dim (\ti{V}_{L}^{\sigma_{3}})_{1} -24}{24}, \quad \text{and hence} \quad
\dim (\ti{V}_{L}^{\sigma_{3}})_{1} = 120. 
\end{equation*}

Set
%
%
\begin{equation} \label{eq:Fg}
\Fg:=
\bigl\{
\underbrace{
(\Fg^{(1)})^{\sigma_{3}} \oplus 
(\Fg^{(2)})^{\sigma_{3}} \oplus
(\Fg^{(3)})^{\sigma_{3}}
}_{=:\Fg_{0} \cong \Fg_{A_{2}}^{\oplus 3}}
\bigr\}
\oplus 
\underbrace{V(\sigma_{3})_{1}}_{=:\Fg_{1}} \oplus 
\underbrace{V(\sigma_{3}^{2})_{1}}_{=:\Fg_{2}}
\subset (\ti{V}_{L}^{\sigma_{3}})_{1}.
\end{equation}
By Remark~\ref{rem:Z3}, \eqref{eq:level}, and the argument above, 
we see that $\Fg$ is an ideal of $(\ti{V}_{L}^{\sigma_{3}})_{1}$ 
satisfying the following conditions: 

(i) $\Fg$ is a $(\BZ/3\BZ)$-graded, semisimple Lie algebra 
of dimension $78$; 

(ii) the dual Coxeter number of a simple component of 
$\Fg$ is contained in $4\BZ$.

\noindent
Hence, $\Fg$ is of type
%
%
\begin{equation} \label{eq:e6}
E_{6}, \quad A_{7}A_{3}, \quad 
\text{or} \quad C_{3}^{3}A_{3}.
\end{equation}
We will show that $\Fg$ is of type $E_{6}$. 

Define a linear automorphism $f : \Fg \rightarrow \Fg$ by: 
$f(x)=\zeta^{k}x$ for $x \in \Fg_{k}$, $k=0,\,1,\,2$. 
Then, $f$ is a Lie algebra automorphism of $\Fg$. 
It follows from \cite[Proposition 8.1]{Kac} that
there exist an element $h$ of a Cartan subalgebra $\Fz$ of $\Fg$ 
and a Dynkin diagram automorphism $\kappa$ of $\Fg$ 
preserving $\Fz$ such that $f \in \Aut(\Fg)$ is conjugate to
\begin{equation*}
f':=\kappa \exp \left( \ad \frac{2\pi\sqrt{-1}}{3}h \right).
\end{equation*}
(Observe that \cite[Proposition 8.1]{Kac} is valid for a semisimple Lie algebra. 
In this case, a Dynkin diagram automorphism may permute some isomorphic components.)
Because $f$ is of order $3$, and hence so is $f'$, 
it follows immediately that 
$\kappa^{3}$ is equal to the identity map. 
We first claim that
%
%
\begin{claim} \label{c:kappa}
The Dynkin diagram automorphism $\kappa$ 
is equal to the identity map. 
\end{claim}

\noindent
{\it Proof of Claim~\ref{c:kappa}. }
Suppose that $\kappa$ is not equal to the identity map. 
Then, $\kappa$ is of order $3$. 
Because neither of $E_{6}$ nor $A_{7}A_{3}$ has 
a Dynkin diagram automorphism of order $3$, 
$\Fg$ should be of type $C_{3}^{3}A_{3}$, and $\kappa$ should be 
the Dynkin diagram automorphism of the Dynkin diagram of type $C_{3}^{3}A_{3}$ 
that permutes the (three) $C_{3}$-components, and 
acts on the $A_{3}$-component trivially. 
Write $\Fg$ as: $\Fg=\Fg^{1} \oplus \Fg^{2} \oplus \Fg^{3} \oplus \Fg^{4}$, 
where $\Fg^{1} \cong \Fg^{2} \cong \Fg^{3} \cong \Fg_{C_{3}}$, and 
$\Fg^{4} \cong \Fg_{A_{3}}$; 
we may assume that $\kappa$ maps $(x_{1},\,x_{2},\,x_{3},\,x_{4}) \in \Fg$ 
to $(x_{2},\,x_{3},\,x_{1},\,x_{4}) \in \Fg$. Following this decomposition, 
write the $h$ as: $h=(h_{1},\,h_{2},\,h_{3},\,h_{4})$.  Then we see that
\begin{equation*}
f'(x_{1},\,x_{2},\,x_{3},\,x_{4})=
(e_{2}x_{2},\,e_{3}x_{3},\,e_{1}x_{1},\,e_{4}x_{4}),
\end{equation*}
where for simplicity of notation, we set
$e_{j}:=\exp \left( \ad \frac{2\pi\sqrt{-1}}{3}h_{j} \right) \in \Aut(\Fg_{j})$ 
for $j=1,\,2,\,3,\,4$. Define $g \in \Aut (\Fg)$ by:
$g(x_{1},\,x_{2},\,x_{3},\,x_{4})=
(x_{1},\,e_{2}x_{2},\,e_{1}^{-1}x_{3},\,x_{4})$.
It follows by direct computation that 
\begin{equation*}
gf'g^{-1}(x_{1},\,x_{2},\,x_{3},\,x_{4})=
(x_{2},\,x_{3},\,x_{1},\,e_{4}x_{4}).
\end{equation*}
We deduce that the fixed point Lie subalgebra of $\Fg$ under 
$gf'g^{-1} \in \Aut(\Fg)$ contains an ideal isomorphic to 
the simple Lie algebra of type $C_{3}$. Because $f \in \Aut(\Fg)$ is 
conjugate to $f'$, and hence to $gf'g^{-1}$, it follows that 
the fixed point Lie subalgebra $\Fg_{0}$ of $\Fg$ under 
$f$ also contains an ideal isomorphic to 
the simple Lie algebra of type $C_{3}$. However, 
this is a contradiction, since $\Fg_{0}$ is of type $A_{2}^{3}$ 
by \eqref{eq:Fg}. Thus we have proved Claim~\ref{c:kappa}. \bqed

\vspace{3mm}

Because $\kappa$ is equal to the identity map, 
we see that the fixed point Lie subalgebra of $\Fg$ under 
$f' \in \Aut(\Fg)$ contains the Cartan subalgebra $\Fz$ of $\Fg$. 
Because $f \in \Aut(\Fg)$ is conjugate to $f'$, 
the fixed point Lie subalgebra $\Fg_{0}$ of $\Fg$ under 
$f$ also contains a Cartan subalgebra of $\Fg$. 
Thus the rank of $\Fg$ is less than or equal to the rank of $\Fg_{0}$, 
which is equal to $6$. Thus we conclude by \eqref{eq:e6} that 
$\Fg$ is of type $E_{6}$. 
This completes the proof of the theorem. 
\end{proof}

%
\section{Construction of a holomorphic VOA (4).}
\label{sec:hvoa4}

%
\subsection{Automorphism of order $3$ on the root lattice $D_{4}$.}
\label{subsec:D4psi}

We use the notation for the root lattice $D_{4}$ 
introduced in \S\ref{subsec:D4}. We define 
a lattice automorphism $\psi=\psi_{D_{4}} \in \Aut(D_{4})$ by: 
\begin{equation*}
(x_{1},\,x_{2},\,x_{3},\,x_{4}) \mapsto 
(x_{2},\,x_{3},\,x_{1},\,x_{4}).
\end{equation*}
Observe that $\psi$ acts on the set $\Delta(D_{4})$ of roots 
fixed-point-freely. Set
\begin{align*}
& \beta_{1} := (1,\,-1,\,0,\,0) \in \Delta(D_{4}), & 
& \beta_{2} := (1,\,0,\,0,\,-1) \in \Delta(D_{4}), \\ 
& \beta_{3} := (1,\,1,\,0,\,0) \in \Delta(D_{4}), & 
& \beta_{4} := (1,\,0,\,0,\,1)  \in \Delta(D_{4}).
\end{align*}
Then, $\bigl\{\pm \beta_{j} \mid 1 \le j \le 4\bigr\}$ 
is a complete set of representatives of $\psi$-orbits 
in $\Delta(D_{4})$. Also, we should remark that 
%
%
\begin{equation} \label{eq:psiD4}
\psi(\ol{[\ell]})=\ol{[\ell]} \quad 
 \text{for every $\ell=0,\,1,\,2,\,3$}.
\end{equation}

%
\subsection{Niemeier lattice $\Ni(D_{4}^{6})$ and 
its automorphism $\sigma_{4}$ of order $3$.}
\label{subsec:Nie4}

Keep the notation and setting for the Niemeier lattice 
$L=\Ni(D_{4}^{6})$ with $Q=D_{4}^{6}$ the root lattice 
in \S\ref{subsec:Nie2}.
Let us define $\sigma_{4}:Q^{\ast} \rightarrow Q^{\ast}$ by: 
\begin{equation*}
\sigma_{4}
(\gamma_{1},\,\gamma_{2},\,\gamma_{3},\,\gamma_{4},\,\gamma_{5},\,\gamma_{6})=
(\psi(\gamma_{1}),\,\vp(\gamma_{2}),\,\vp^{-1}(\gamma_{3}),\,
 \gamma_{6},\,\vp^{-1}(\gamma_{4}),\,\vp(\gamma_{5})), 
\end{equation*}
where $\vp=\vp_{D_{4}}$ is the lattice automorphism of $D_{4}^{\ast}$ 
introduced in \S\ref{subsec:D4}. Then we deduce from \eqref{eq:vpD4} and 
\eqref{eq:psiD4} that $L \subset Q^{\ast}$ is stable under 
the action of $\sigma_{4}$; for example, we have, modulo $Q$, 
\begin{align*}
& [111111] \mapsto [123132] \equiv [032023]+[111111], \\
& [222222] \mapsto [231213] \equiv [023320]+[032023]+[222222], \\
& [002332] \mapsto [001221] \equiv [033202]+[032023],
\end{align*}
and so on. 
Therefore, $\sigma_{4}$ gives a lattice automorphism of $L$ of order $3$; 
since $\rank D_{4}^{\psi}=2$ and $\rank D_{4}^{\vp}=0$, it follows that 
$\rank L^{\sigma_{4}}=2+4=6$, and hence 
%
%
\begin{equation} \label{eq:rho4}
\rho=\frac{1}{18}(\dim \Fh_{(1)} + \dim \Fh_{(2)})=
\frac{1}{18}(24-6)=1. 
\end{equation}
Thus we can apply Theorem~\ref{thm:M} 
to $L=\Ni(D_{4}^{6})$ and the $\sigma_{4}$ above, and 
obtain a $C_{2}$-cofinite, holomorphic VOA $\ti{V}_{L}^{\sigma_{4}}$
of central charge $24$. 
We are ready to state the main result in this section. 
%
%
\begin{thm} \label{thm:main4}
Keep the notation and setting above. 
For $L=\Ni(D_{4}^{6})$ and the $\sigma_{4}$ above, 
the Lie algebra $(\ti{V}_{L}^{\sigma_{4}})_{1}$ is 
of type $A_{5,3}D_{4,3}A_{1,1}^{3}$. 
Therefore, $\ti{V}_{L}^{\sigma_{4}}$ corresponds to 
No.\,17 on Schellekens' list \cite[Table 1]{Sch}.  
\end{thm}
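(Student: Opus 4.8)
The plan is to follow closely the strategy already established in the proof of Theorem~\ref{thm:main3}, adapting each step to the automorphism $\sigma_{4}$. First I would determine the fixed-point subalgebra $(V_{L}^{\sigma_{4}})_{1}$ component by component, using the decomposition $(V_{L})_{1} \cong \Fg^{(1)} \oplus \cdots \oplus \Fg^{(6)}$ with each $\Fg^{(i)} \cong \Fg_{D_{4}}$ from the proof of Theorem~\ref{thm:main2}. Since $\sigma_{4}$ does \emph{not} preserve the individual simple components (it sends the $4$th copy to the $6$th and permutes the $3$rd, $4$th, $5$th copies via a cyclic pattern), I would first group the six copies according to the orbit structure of $\sigma_{4}$: the first copy is preserved (acted on by $\psi$), the second copy is preserved (acted on by $\vp$), and the remaining copies $\{3,4,5,6\}$ are permuted among themselves. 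The fixed points in a $\sigma_{4}$-orbit of copies permuted cyclically give a single diagonal copy of $\Fg_{D_{4}}$ (or its appropriate fixed subalgebra), so I would carefully compute $\dim (V_{L}^{\sigma_{4}})_{1}$: the $\psi$-fixed part gives $\dim \Fg_{D_{4}}^{\psi}$, the $\vp$-fixed part gives $\dim \Fg_{D_{4}}^{\vp}=8$, and the permuted copies contribute accordingly.

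Next I would identify the isomorphism types of these fixed subalgebras. For the copy acted on by $\psi$, since $\psi$ is the order-$3$ permutation $(x_{1},x_{2},x_{3},x_{4})\mapsto(x_{2},x_{3},x_{1},x_{4})$ acting fixed-point-freely on $\Delta(D_{4})$ with $\rank D_{4}^{\psi}=2$, I would compute that $\Fg_{D_{4}}^{\psi}$ has dimension $2 + 24/3 = 10$, which identifies it (being reductive of rank $2$ and dimension $10$) appropriately; here the presence of $\psi$-fixed roots versus the fixed-point-free action on roots must be reconciled with the rank-$2$ fixed lattice, and this is where I expect the bookkeeping to require care. The $\vp$-acted copy yields $\Fg_{A_{2}}$ as in Theorem~\ref{thm:main2}. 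Assembling the levels via \eqref{eq:level}, with $\dim (\ti{V}_{L}^{\sigma_{4}})_{1}=72$ for No.\,17, I would pin down which components are ideals of the full Lie algebra $(\ti{V}_{L}^{\sigma_{4}})_{1}$ and which merge with the twisted pieces $V(\sigma_{4})_{1}$ and $V(\sigma_{4}^{2})_{1}$.

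The genuinely new structural work, paralleling Proposition~\ref{prop:A2-3} and Claim~\ref{c:kappa}, is to analyze the graded ideal $\Fg := \Fg_{0} \oplus V(\sigma_{4})_{1} \oplus V(\sigma_{4}^{2})_{1}$ whose degree-zero part $\Fg_{0}$ is \emph{not} the whole of $(V_{L}^{\sigma_{4}})_{1}$ but only the part built from the cyclically-permuted copies $\{3,4,5,6\}$. Using Lemma~\ref{lem:Ysh}(1),(2) together with \eqref{eq:Yti}, I would argue (exactly as for the $G_{2,1}$ components in Theorem~\ref{thm:main3}) that the fixed subalgebras coming from copies on which $\sigma_{4}$ acts via a \emph{root}-preserving-free automorphism (so that $\alpha^{(i)} \in \Delta\setminus N$) become honest ideals of $(\ti{V}_{L}^{\sigma_{4}})_{1}$ and contribute the $A_{1,1}^{3}$ (and possibly the $A_{5,3}$ or $D_{4,3}$) pieces, while the remaining part fuses with the twisted modules to form the large simple component. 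I expect the main obstacle to be the Kac-theory argument (via \cite[Proposition 8.1]{Kac}): I would introduce the automorphism $f$ acting by $\zeta^{k}$ on $\Fg_{k}$, reduce $f$ up to conjugacy to $\kappa \exp(\ad \tfrac{2\pi\sqrt{-1}}{3}h)$, rule out nontrivial Dynkin diagram automorphisms $\kappa$ by matching the known fixed subalgebra $\Fg_{0}$ against the candidate types (here the candidate list from dimension and dual-Coxeter-number constraints must be recomputed for whichever subalgebra $\Fg$ turns out to have dimension $24$ or larger), and conclude via a rank comparison that $\Fg$ has the required type.

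The hard part will be organizing the orbit bookkeeping correctly: because $\sigma_{4}$ mixes the six $D_{4}$-copies in a nonuniform way (one copy via $\psi$, one via $\vp$, and a four-cycle-like pattern on the rest using $\vp$, $\vp^{-1}$, and the identity-index shift $\gamma_{4}\mapsto\gamma_{6}$), determining precisely which roots $\alpha^{(i)}$ lie in $N$ versus $\Delta\setminus N$ — and hence which fixed subalgebras are ideals of the full VOA Lie algebra versus which merge with the twisted sectors — requires explicitly tracking the definition \eqref{eq:N} of $N$ through this asymmetric construction. Once that combinatorial structure is settled, the final type determination $A_{5,3}D_{4,3}A_{1,1}^{3}$ should follow by the same template of dual-Coxeter-number matching and the Kac-automorphism rank argument used in the preceding theorems.
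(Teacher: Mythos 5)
Your overall template---decompose $(V_L)_1$ into the six copies of $\Fg_{D_4}$, identify the fixed subalgebras, use Lemma~\ref{lem:Ysh} with \eqref{eq:Yti} to isolate honest ideals, and then analyze a $(\BZ/3\BZ)$-graded ideal that fuses with the twisted sectors---is the right one and is the paper's. But three concrete points go wrong. First, the orbit bookkeeping: $\sigma_4$ preserves copies $1$, $2$ \emph{and} $3$ individually (acting by $\psi$, $\vp$, $\vp^{-1}$ respectively) and cyclically permutes only $\{4,5,6\}$; an order-$3$ map cannot mix four copies in the way you describe. The correct decomposition is $(V_L^{\sigma_4})_1 \cong \Fg_{D_4}^{\psi}\oplus\Fg_{D_4}^{\vp}\oplus\Fg_{D_4}^{\vp^{-1}}\oplus\Fg_{D_4}$, the diagonal $\Fg_{D_4}$ from copies $4,5,6$ being the simple ideal of type $D_{4,3}$ (it does \emph{not} fuse with the twisted sectors; what fuses is built from copies $1,2,3$). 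Second, $\Fg_{D_4}^{\psi}$ is not ``reductive of rank $2$ and dimension $10$''---that would force type $B_2$ and sink the theorem. The tension you flag resolves as follows: the roots $\pm\beta_1=\pm(1,-1,0,0)$ satisfy $\beta_{1(0)}=0$, so their $\psi$-orbit sums are semisimple (toral) elements rather than root vectors, and $\Fg_{D_4}^{\psi}$ has rank $4$. The paper shows that the orbit sums for $j=2,3,4$ together with the $\psi$-fixed Cartan part generate an ideal of the whole $(\ti{V}_L^{\sigma_4})_1$ isomorphic to $\Fg_{A_1}^{\oplus 3}$---this is precisely where the $A_{1,1}^3$ factor comes from---leaving $\Fg_{D_4}^{\psi}=\Fa\oplus\Fg_{A_1}^{\oplus 3}$ with $\Fa$ one-dimensional.

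Third, your plan to finish with the Kac Proposition~8.1 / diagram-automorphism / rank-comparison template of Theorem~\ref{thm:main3} cannot close the argument here. The relevant graded ideal is $\Fg=(\Fa\oplus\Fg_{D_4}^{\vp}\oplus\Fg_{D_4}^{\vp^{-1}})\oplus V(\sigma_4)_1\oplus V(\sigma_4^2)_1$, of dimension $35$, and the two surviving candidate types $A_5$ and $C_3G_2$ have the same rank ($5$), the same dimension, and only even dual Coxeter numbers, so neither the triviality of $\kappa$ nor the rank bound distinguishes them. The paper instead shows that $\Fa\oplus\Ft_1\oplus\Ft_2$ is a Cartan subalgebra of $\Fg$, locates a simple ideal $\Fu$ containing $\Fg_{D_4}^{\vp}\cong\Fg_{A_2}$, and argues that the level of $\Fu$ equals $3/r$ where $r$ is the squared-length ratio of a long root of $\Fu$ to a root of that $\Fg_{A_2}$; integrality forces level $1$ or $3$, whereas type $C_3$ or $G_2$ would give level $2$. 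Without this (or some equivalent) extra argument your proof does not determine the type $A_5$.
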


\begin{proof}
We will use the notation introduced in the proof of Theorem~\ref{thm:main2}. 
Recall that 
\begin{equation*}
(V_{L})_{1} \cong 
 \Fg^{(1)} \oplus \cdots \oplus \Fg^{(6)} \quad \text{with} \quad 
 \Fg^{(i)} \cong \Fg_{D_{4}} \quad \text{for every $1 \le i \le 6$}.
\end{equation*}
Then, $\sigma_{4}$ acts on $\Fg^{(1)}$ 
(resp. $\Fg^{(2)}$, $\Fg^{(3)}$) 
as the Lie algebra automorphism induced from $\psi$ 
(resp., $\vp$, $\vp^{-1}$). Also, we can easily check that 
the fixed point subalgebra of $\Fg^{(4)} \oplus \Fg^{(5)} \oplus \Fg^{(6)}$ 
under $\sigma_{4}$ is isomorphic to $\Fg_{D_{4}}$. Thus it follows that 
%
%
\begin{equation} \label{eq:s4-1}
(V_{L}^{\sigma_{4}})_{1} \cong 
  \Fg_{D_{4}}^{\psi} \oplus \Fg_{D_{4}}^{\vp} \oplus \Fg_{D_{4}}^{\vp^{-1}} 
  \oplus \Fg_{D_{4}}.
\end{equation}
In addition, by the same way as in the proof of Theorem~\ref{thm:main}, 
we deduce that $\Fg_{D_{4}}$, the last component of the right-hand side above, 
is a simple ideal of type $D_{4,3}$ of the whole Lie algebra $(\ti{V}_{L}^{\sigma_{4}})_{1}$. 
Since the dual Coxeter number of $D_{4}$ is equal to $6$, 
it follows immediately from \eqref{eq:level} that 
\begin{equation*}
\frac{6}{3}=
\frac{\dim (\ti{V}_{L}^{\sigma_{4}})_{1} -24}{24}, \quad \text{and hence} \quad
\dim (\ti{V}_{L}^{\sigma_{4}})_{1} = 72. 
\end{equation*}

Now, let us determine the Lie algebra structure of $\Fg_{D_{4}}^{\psi}$, 
the first component of the right-hand side of \eqref{eq:s4-1}. Observe that 
$\Fg_{D_{4}}^{\psi}$ is spanned by the following: 
\begin{align*}
& 
\bigl\{
 1 \otimes e^{\pm \beta_{j}^{(1)}} + 
 1 \otimes e^{\pm \psi(\beta_{j})^{(1)}} + 
 1 \otimes e^{\pm \psi^{2}(\beta_{j})^{(1)}} \mid j=1,\,2,\,3,\,4
\bigr\} \quad \text{(8 vectors)}; \\
&
\bigl\{
  h^{(1)}(-1) \otimes e^{0} \mid 
  h \in D_{4} \otimes_{\BZ} \BC, \, \psi(h)=h
\bigr\} \quad \text{(2-dimensional)};
\end{align*}
in particular, $\dim \Fg_{D_{4}}^{\psi}=10$. 
Let $\Fs$ be the Lie subalgebra of $\Fg_{D_{4}}^{\psi}$ generated by:
\begin{align*}
& 
\bigl\{
 1 \otimes e^{\pm \beta_{j}^{(1)}} + 
 1 \otimes e^{\pm \psi(\beta_{j})^{(1)}} + 
 1 \otimes e^{\pm \psi^{2}(\beta_{j})^{(1)}} \mid j=2,\,3,\,4
\bigr\} \quad \text{(6 vectors)}; \\
&
\bigl\{
  h^{(1)}(-1) \otimes e^{0} \mid 
  h \in D_{4} \otimes_{\BZ} \BC, \, \psi(h)=h
\bigr\} \quad \text{(2-dimensional)}.
\end{align*}
Then we deduce that $\Fs$ is an ideal of $\Fg_{D_{4}}^{\psi}$; 
indeed, it suffices to show that $\Fs$ is stable under the 
adjoint action of  $1 \otimes e^{\pm \beta_{1}^{(1)}} + 
 1 \otimes e^{\pm \psi(\beta_{1})^{(1)}} + 
 1 \otimes e^{\pm \psi^{2}(\beta_{1})^{(1)}}$. 
This follows from the facts that $\pair{h}{\beta_{1}}=0$ 
for all $h \in D_{4} \otimes_{\BZ} \BC$ such that $\psi(h)=h$, 
and that for any $0 \le r,\,s \le 2$ and $j=2,\,3,\,4$, 
\begin{equation*}
[1 \otimes e^{\pm \psi^{s}(\beta_{1})^{(1)}}, \, 
 1 \otimes e^{\pm \psi^{r}(\beta_{j})^{(1)}}]
\end{equation*}
is equal to either $0$ or a scalar multiple of 
$1 \otimes e^{\pm \psi^{t}(\beta_{k})^{(1)}}$ 
for some $0 \le t \le 2$ and $k=2,\,3,\,4$. 
Also, because $\pm \psi^{r}(\beta_{j})^{(1)} \in \Delta \setminus N$ 
for all $0 \le r \le 2$ and $j=2,\,3,\,4$, 
it follows from Lemma~\ref{lem:Ysh}\,(1) and (2) that 
$\Fs$ is also an ideal of the whole Lie algebra 
$(\ti{V}_{L}^{\sigma_{4}})_{1}$; in particular, 
$\Fs$ is a semisimple Lie algebra. Because $8 \le \dim \Fs \le 
10=\dim \Fg_{D_{4}}^{\psi}$, and because the dual Coxeter number of 
a simple component of $\Fs$ is an even integer by \eqref{eq:level}, 
we deduce that $\Fs \cong \Fg_{A_{1}}^{\oplus 3}$. 
Therefore the Lie algebra $\Fg_{D_{4}}^{\psi}$ is a reductive 
Lie algebra of the form: $\Fa \oplus \Fg_{A_{1}}^{\oplus 3}$, 
where $\Fa$ is a $1$-dimensional (abelian) Lie algebra. 

Set
\begin{equation*}
\Fg:=
 \underbrace{
 (\Fa \oplus \Fg_{D_{4}}^{\vp} \oplus \Fg_{D_{4}}^{\vp^{-1}})}_{%
  =:\Fg_{0}}
 \oplus 
 \underbrace{V(\sigma_{4})_{1}}_{=:\Fg_{1}} \oplus 
 \underbrace{V(\sigma_{4}^{2})_{1}}_{=:\Fg_{2}}.
\end{equation*}
By Remark~\ref{rem:Z3}, \eqref{eq:level}, and the argument above, 
we see that $\Fg$ is an ideal of $(\ti{V}_{L}^{\sigma_{4}})_{1}$ 
satisfying the following conditions: 

(i) $\Fg$ is a $(\BZ/3\BZ)$-graded, semisimple Lie algebra 
of dimension $35$; 

(ii) the dual Coxeter number of a simple component of $\Fg$ is 
an even integer. 

\noindent
Hence, $\Fg$ is of type
%
%
\begin{equation} \label{eq:a5}
A_{5} \quad \text{or} \quad C_{3}G_{2};
\end{equation}
in particular, $\Fg$ is of rank $5$. 
We will show that $\Fg$ is of type $A_{5}$. 

We see from the proof of Theorem~\ref{thm:main2} that 
both of $\Fg_{D_{4}}^{\vp}$ and $\Fg_{D_{4}}^{\vp^{-1}}$ 
are isomorphic to $\Fg_{A_{2}}$ 
(and hence $\Fg_{0} \cong \Fa \oplus \Fg_{A_{2}}^{\oplus 2}$). 
Let $\Ft_{1}$ and $\Ft_{2}$ be 
the Cartan subalgebras of $\Fg_{D_{4}}^{\vp} \cong \Fg_{A_{2}}$ 
and $\Fg_{D_{4}}^{\vp^{-1}} \cong \Fg_{A_{2}}$, respectively. 
Also, let $\Ft$ be the maximal subalgebra of $\Fg_{0}$, 
containing $\Ft_{1} \oplus \Ft_{2}$, such that 
the adjoint action of each element of $\Ft$ on $\Fg$ 
is semisimple. We see from \cite[Lemma~8.1\,b)]{Kac} that 
the centralizer $\Fh$ of $\Ft$ in $\Fg$ is 
a Cartan subalgebra of $\Fg$. It can be easily checked that 
$\Fh$ contains $\Fa \oplus \Ft_{1} \oplus \Ft_{2}$. 
Since $\Fg$ is of rank $5$, we have 
$\Fh=\Fa \oplus \Ft_{1} \oplus \Ft_{2}$. 
Thus we deduce that each root space of $\Fg$ (with respect to $\Fh$) 
is contained in exactly one of 
the graded spaces $\Fg_{0}$, $\Fg_{1}$, and $\Fg_{2}$. 
In particular, there exists a simple ideal $\Fu$ of $\Fg$ 
that contains $\Fg_{D_{4}}^{\vp} \cong \Fg_{A_{2}}$. 

Assume that the ratio of the squared length of a long root of $\Fu$ to 
the squared length of a root of $\Fg_{D_{4}}^{\vp} \cong \Fg_{A_{2}}$ 
is $r$ to $1$; note that $r \in \bigl\{1,\,2,\,3\bigr\}$. 
We deduce from the proof of Theorem~\ref{thm:main} that 
if we normalize the bilinear form $(\cdot\,,\,\cdot)_{\Fu}$ on $\Fu$ 
in such a way that the squared norm of a long root of 
$\Fu$ is equal to $2$, then for $x,\,y \in \Fg_{D_{4}}^{\vp}$ and 
$m,\,n \in \BZ$, 
\begin{equation*}
[x_{m},\,y_{n}]= 
(x_{0}y)_{m+n}+(3/r)m\delta_{m+n,\,0}(x,\,y)_{\Fu}\id_{\ti{V}_{L}^{\sigma_{4}}}
\quad \text{on $\ti{V}_{L}^{\sigma_{4}}$}.
\end{equation*}
Because the level of a simple component is a (positive) integer, 
the level of $\Fu$ is equal to either $1$ or $3$. 
If $\Fu$ were of type $C_{3}$ or $G_{2}$, then the level of $\Fu$ 
would be equal to $2$, which is a contradiction. Thus, by \eqref{eq:a5}, 
we conclude that $\Fg$ is of type $A_{5}$. 
This completes the proof of Theorem~\ref{thm:main4}. 
\end{proof}

%
\section{Construction of a holomorphic VOA (5).}
\label{sec:hvoa5}

%
\subsection{Automorphism of order $3$ on the root lattice $A_{5}$.}
\label{subsec:A5psi}

We use the notation introduced in \S\ref{subsec:An} (with $n=5$). 
Define a lattice automorphism $\psi=\psi_{A_{5}} \in \Aut(A_{5})$ by:
\begin{equation*}
(x_{0},\,x_{1},\,x_{2},\,x_{3},\,x_{4},\,x_{5})
 \mapsto
(x_{2},\,x_{0},\,x_{1},\,x_{5},\,x_{3},\,x_{4}).
\end{equation*}
Observe that $\psi$ acts on the set $\Delta(A_{5})$ of roots 
fixed-point-freely. Set
\begin{align*}
& \beta_{1} := (1,\,-1,\,0,\,0,\,0,\,0) \in \Delta(A_{5}), & 
& \beta_{2} := (0,\,0,\,0,\,1,\,0,\,-1) \in \Delta(A_{5}), & \\
& \beta_{3} := (1,\,0,\,0,\,-1,\,0,\,0) \in \Delta(A_{5}), & 
& \beta_{4} := (1,\,0,\,0,\,0,\,-1,\,0) \in \Delta(A_{5}), & \\
& \beta_{5} := (1,\,0,\,0,\,0,\,0,\,-1) \in \Delta(A_{5}). & 
\end{align*}
Then, $\bigl\{\pm \beta_{j} \mid 1 \le j \le 5\bigr\}$ 
is a complete set of representatives of $\psi$-orbits 
in $\Delta(A_{5})$. Also, it can be easily checked that 
%
%
\begin{equation} \label{eq:psiA5}
\psi(\ol{[\ell]})=\ol{[\ell]} \quad 
\text{for every $\ell=1,\,2,\,\dots,\,5$}.
\end{equation}

%
\subsection{Niemeier lattice $\Ni(A_{5}^{4}D_{4})$ and 
its automorphism $\sigma_{5}$ of order $3$.}
\label{subsec:Nie5}

The Niemeier lattice $L=\Ni(A_{5}^{4}D_{4})$ with $Q=A_{5}^{4}D_{4}$ 
the root lattice is, by definition (see \cite[Chapter 16, Table 16.1]{CS}), 
the sublattice of $Q^{\ast}=(A_{5}^{\ast})^{4}D_{4}^{\ast}$ 
generated by $Q$, $[33001]$, $[30302]$, $[30033]$, and 
$[2(024)0]=\bigl\{[20240],\,[22400],\,[24020]\bigr\}$, 
where $[a_{1} \cdots a_{5}]:=([a_{1}],\,\dots,\,[a_{5}]) \in 
Q^{\ast}=(A_{5}^{\ast})^{4}D_{4}^{\ast}$.
Let us define $\sigma_{5}:Q^{\ast} \rightarrow Q^{\ast}$ by: 
\begin{equation*}
\sigma_{5}(\gamma_{1},\,\gamma_{2},\,\gamma_{3},\,\gamma_{4},\,\gamma_{5})=
(\psi(\gamma_{1}),\,\gamma_{4},\,\gamma_{2},\,\gamma_{3},\,\vp(\gamma_{5})),
\end{equation*}
where $\vp=\vp_{D_{4}}$ is the lattice automorphism of $D_{4}^{\ast}$ 
introduced in \S\ref{subsec:D4}. Then we deduce from \eqref{eq:vpD4} and 
\eqref{eq:psiA5} that $L \subset Q^{\ast}$ is stable under 
the action of $\sigma_{5}$. 
Therefore, $\sigma_{5}$ gives a lattice automorphism of $L$ of order $3$; 
since $\rank A_{5}^{\psi}=1$ and $\rank D_{4}^{\vp}=0$, it follows that 
$\rank L^{\sigma_{5}}=1+5=6$, and hence 
%
%
\begin{equation} \label{eq:rho5}
\rho=\frac{1}{18}(\dim \Fh_{(1)} + \dim \Fh_{(2)})=
\frac{1}{18}(24-6)=1. 
\end{equation}
Thus we can apply Theorem~\ref{thm:M} 
to $L=\Ni(A_{5}^{4}D_{4})$ and the $\sigma_{5}$ above, and 
obtain a $C_{2}$-cofinite, holomorphic VOA $\ti{V}_{L}^{\sigma_{5}}$
of central charge $24$. 
We are ready to state the main result in this section. 
%
%
\begin{thm} \label{thm:main5}
Keep the notation and setting above. 
For $L=\Ni(A_{5}^{4}D_{4})$ and the $\sigma_{5}$ above, 
the Lie algebra $(\ti{V}_{L}^{\sigma_{5}})_{1}$ is 
of type $A_{5,3}D_{4,3}A_{1,1}^{3}$. 
Therefore, $\ti{V}_{L}^{\sigma_{5}}$ corresponds to 
No.\,17 on Schellekens' list \cite[Table 1]{Sch}.  
\end{thm}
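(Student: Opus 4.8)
The plan is to follow the strategy of Theorems~\ref{thm:main2}, \ref{thm:main3}, and especially \ref{thm:main4}, exploiting that $\sigma_5$ acts on the five simple components of $(V_L)_1 \cong \Fg_{A_5}^{\oplus 4} \oplus \Fg_{D_4}$ in three different ways: as $\psi=\psi_{A_5}$ on the first $A_5$-component, as a cyclic permutation of the three middle $A_5$-components, and as $\vp=\vp_{D_4}$ on the $D_4$-component. First I would read off $(V_L^{\sigma_5})_1$. The cyclic permutation yields a diagonal copy of $\Fg_{A_5}$, spanned by orbit sums $1\otimes e^{\alpha^{(2)}} + 1\otimes e^{\alpha^{(3)}} + 1\otimes e^{\alpha^{(4)}}$; since the underlying roots lie in $\Delta\setminus N$, Lemma~\ref{lem:Ysh}\,(1),(2) together with \eqref{eq:Yti} show this copy is a simple ideal of $(\ti{V}_L^{\sigma_5})_1$, and the diagonal-of-three-level-one embedding (as in Theorem~\ref{thm:main}) makes it of type $A_{5,3}$. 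Applying \eqref{eq:level} to this ideal fixes $\dim (\ti{V}_L^{\sigma_5})_1 = 72$, hence $\dim V(\sigma_5)_1 = \dim V(\sigma_5^2)_1 = 9$.

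Next I would analyse $\Fg_{A_5}^{\psi}$, which is reductive of dimension $11$: computing the centralizer of the order-$3$ permutation $(012)(345)$ in $\Fg_{A_5}=\mathfrak{sl}_6$ gives $\Fg_{A_5}^{\psi} \cong \Fa^{(2)} \oplus \Fg_{A_1}^{\oplus 3}$, where $\Fa^{(2)}$ is a $2$-dimensional center. Exactly as in Theorem~\ref{thm:main4}, the three $A_1$-summands are generated by the orbit sums of the cross-block roots $\beta_3,\beta_4,\beta_5$, which lie in $\Delta\setminus N$; by Lemma~\ref{lem:Ysh}\,(1),(2) these act trivially on the twisted modules, so they constitute three simple ideals of the whole Lie algebra, each embedded with Dynkin index $1$ into the level-one $\Fg_{A_5}$ and hence of type $A_{1,1}$. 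The remaining within-block roots $\beta_1,\beta_2$ have vanishing $\psi$-orbit sums, so they lie in $N$, and by Lemma~\ref{lem:Ysh}\,(3) (together with the commutativity argument of Theorem~\ref{thm:main}) the center $\Fa^{(2)}$ acts semisimply on $V(\sigma_5)_1$ and $V(\sigma_5^2)_1$. The $\vp$-component contributes $\Fg_{D_4}^{\vp}\cong\Fg_{A_2}$, as in Theorem~\ref{thm:main2}.

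It then remains to identify the graded ideal $\Fg := \Fg_0 \oplus \Fg_1 \oplus \Fg_2$ with $\Fg_0 = \Fa^{(2)} \oplus \Fg_{A_2}$, $\Fg_1 = V(\sigma_5)_1$, $\Fg_2 = V(\sigma_5^2)_1$; the ambient $(\ti{V}_L^{\sigma_5})_1$ being semisimple by Proposition~\ref{prop:DM}, this $\Fg$ is a $(\BZ/3\BZ)$-graded semisimple Lie algebra of dimension $2+8+9+9 = 28$ whose degree-zero part is reductive of rank $4$ with semisimply acting center. Following Theorem~\ref{thm:main4}, I would use \cite[Lemma~8.1\,b)]{Kac} to produce a Cartan subalgebra $\Fh = \Fa^{(2)} \oplus \Ft$ of $\Fg$ (with $\Ft$ the Cartan of $\Fg_{A_2}$), so that $\Fg$ has rank $4$, every root space lies in a single graded piece, and $\Fg_{A_2}$ sits inside a single simple ideal $\Fu$ of $\Fg$. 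The dual-Coxeter constraint \eqref{eq:level} gives $h^{\vee}_{\Fu} = 2k_{\Fu}$, while the length-ratio computation from the proof of Theorem~\ref{thm:main}, transported to $\Fu$ as in Theorem~\ref{thm:main4}, gives $k_{\Fu} = 3/r$, where $r\in\{1,2,3\}$ is the ratio of the squared length of a long root of $\Fu$ to that of a root of $\Fg_{A_2}$. Hence $h^{\vee}_{\Fu} = 6/r$; since $h^{\vee}_{\Fu}$ is even and $\Fu$ has rank at least $2$, this forces $h^{\vee}_{\Fu}=6$. Among simple Lie algebras of dual Coxeter number $6$ (namely $A_5$, $C_5$, $D_4$) only $D_4$ has dimension $\le \dim\Fg = 28$, so $\Fu=\Fg$ is of type $D_{4,3}$. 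Assembling the ideals gives $(\ti{V}_L^{\sigma_5})_1 \cong A_{5,3} \oplus D_{4,3} \oplus A_{1,1}^{3}$, i.e.\ No.~17.

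The main obstacle is this last identification of $\Fg$. Unlike in Theorem~\ref{thm:main4}, where integrality of the level alone excluded the competitor $C_3 G_2$, here the rank-four competitor $G_2^{\oplus 2}$ can a priori produce an integer level, so integrality is not enough; the decisive input is the combination of the length-ratio formula $k_{\Fu}=3/r$ with the dual-Coxeter relation \eqref{eq:level} and the dimension bound $\dim\Fu \le 28$. I expect the careful bookkeeping of the invariant form on $\Fu$ versus on $\Fg_{A_2}$, and the verification that the relevant embedding forces $r=1$, to be the most delicate points; establishing that $\Fa^{(2)}$ and the three $A_{1,1}$-ideals separate cleanly (so that $\Fg_0 = \Fa^{(2)}\oplus\Fg_{A_2}$ really is the degree-zero part of a simple $D_4$) will also require the same care with the $N$ versus $\Delta\setminus N$ dichotomy as in Theorem~\ref{thm:main4}.
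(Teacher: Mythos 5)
Your proposal matches the paper's proof in all essentials: the same decomposition $(V_L^{\sigma_5})_1 \cong \Fg_{A_5}^{\psi}\oplus\Fg_{A_5}\oplus\Fg_{D_4}^{\vp}$, the same identification of the diagonal $A_{5,3}$ ideal forcing $\dim(\ti{V}_L^{\sigma_5})_1=72$, the same splitting $\Fg_{A_5}^{\psi}\cong\Fa^{(2)}\oplus\Fg_{A_1}^{\oplus 3}$ with the three $A_{1,1}$ ideals coming from the cross-block orbit sums, and the same $28$-dimensional rank-$4$ graded ideal $\Fg$ identified as $D_{4,3}$ via the relation $k_{\Fu}=3/r$ combined with \eqref{eq:level}. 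Your last step (deriving $h^{\vee}_{\Fu}=6$ and then invoking the dimension bound) is just an equivalent repackaging of the paper's exclusion of $G_2^2$ by noting that a $G_2$ component would have level $2$, so there is no genuine divergence.
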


\begin{proof}
We see that 
\begin{align*}
& (V_{L})_{1} \cong 
  \Fg^{(1)} \oplus \cdots \oplus \Fg^{(5)} \\
& \hspace*{20mm} 
  \text{with $\Fg^{(i)} \cong \Fg_{A_{5}}$ for every $1 \le i \le 4$, and 
  $\Fg^{(5)} \cong \Fg_{D_{4}}$}.
\end{align*}
Then, $\sigma_{5}$ acts on $\Fg^{(1)} \cong \Fg_{A_{5}}$ 
(resp., $\Fg^{(5)} \cong \Fg_{D_{4}}$) 
as the Lie algebra automorphism induced from $\psi=\psi_{A_{5}}$ 
(resp., $\vp=\vp_{D_{4}}$). Also, we can easily check that 
the fixed point subalgebra of $\Fg^{(2)} \oplus \Fg^{(3)} \oplus \Fg^{(4)}$ 
under $\sigma_{5}$ is isomorphic to $\Fg_{A_{5}}$. Thus it follows that 
%
%
\begin{equation} \label{eq:s5-1}
(V_{L}^{\sigma_{5}})_{1} \cong 
  \Fg_{A_{5}}^{\psi} \oplus \Fg_{A_{5}} \oplus \Fg_{D_{4}}^{\vp}.
\end{equation}
In addition, by the same way as in the proof of Theorem~\ref{thm:main}, 
we deduce that $\Fg_{A_{5}}$, the second component of the right-hand side above, 
is a simple ideal of type $A_{5,3}$ of 
the whole Lie algebra $(\ti{V}_{L}^{\sigma_{5}})_{1}$. 
Since the dual Coxeter number of $A_{5}$ is equal to $6$, 
it follows immediately from \eqref{eq:level} that 
\begin{equation*}
\frac{6}{3}=
\frac{\dim (\ti{V}_{L}^{\sigma_{5}})_{1} -24}{24}, \quad \text{and hence} \quad
\dim (\ti{V}_{L}^{\sigma_{5}})_{1} = 72. 
\end{equation*}

Now, let us determine the Lie algebra structure of 
the first component $\Fg_{A_{5}}^{\psi}$. Observe that 
$\Fg_{A_{5}}^{\psi}$ is spanned by the following: 
\begin{align*}
& 
\bigl\{
 1 \otimes e^{\pm \beta_{j}^{(1)}} + 
 1 \otimes e^{\pm \psi(\beta_{j})^{(1)}} + 
 1 \otimes e^{\pm \psi^{2}(\beta_{j})^{(1)}} \mid 1 \le j \le 5
\bigr\} \quad \text{(10 vectors)}; \\
&
\bigl\{
  h^{(1)}(-1) \otimes e^{0} \mid 
  h \in A_{5} \otimes_{\BZ} \BC, \, \psi(h)=h
\bigr\} \quad \text{(1-dimensional)};
\end{align*}
in particular, $\dim \Fg_{A_{5}}^{\psi}=11$. 
Let $\Fs$ be the Lie subalgebra of $\Fg_{A_{5}}^{\psi}$ generated by:
\begin{align*}
& 
\bigl\{
 1 \otimes e^{\pm \beta_{j}^{(1)}} + 
 1 \otimes e^{\pm \psi(\beta_{j})^{(1)}} + 
 1 \otimes e^{\pm \psi^{2}(\beta_{j})^{(1)}} \mid 3 \le j \le 5
\bigr\} \quad \text{(6 vectors)}; \\
&
\bigl\{
  h^{(1)}(-1) \otimes e^{0} \mid 
  h \in A_{5} \otimes_{\BZ} \BC, \, \psi(h)=h
\bigr\} \quad \text{(1-dimensional)}.
\end{align*}
Then we can show in exactly the same way as 
for $\Fs$ in the proof of Theorem~\ref{thm:main4} that 
$\Fs$ is an ideal of type $A_{1,1}^{3}$ of 
the whole Lie algebra $(\ti{V}_{L}^{\sigma_{4}})_{1}$. 
Therefore the Lie algebra $\Fg_{A_{5}}^{\psi}$ is a reductive 
Lie algebra of the form: $\Fa \oplus \Fg_{A_{1}}^{\oplus 3}$, 
where $\Fa$ is a $2$-dimensional (abelian) Lie algebra. 

Set
\begin{equation*}
\Fg:=
 \underbrace{
 (\Fa \oplus \Fg_{D_{4}}^{\vp})}_{%
  =:\Fg_{0}}
 \oplus 
 \underbrace{V(\sigma_{5})_{1}}_{=:\Fg_{1}} \oplus 
 \underbrace{V(\sigma_{5}^{2})_{1}}_{=:\Fg_{2}}.
\end{equation*}
By Remark~\ref{rem:Z3}, \eqref{eq:level}, and the argument above, 
we see that $\Fg$ is an ideal of $(\ti{V}_{L}^{\sigma_{5}})_{1}$ 
satisfying the following conditions: 

(i) $\Fg$ is a $(\BZ/3\BZ)$-graded, semisimple Lie algebra 
of dimension $28$; 

(ii) the dual Coxeter number of a simple component of $\Fg$ is 
an even integer. 

\noindent
Hence, $\Fg$ is of type
%
%
\begin{equation} \label{eq:d4}
D_{4} \quad \text{or} \quad G_{2}^{2};
\end{equation}
in particular, $\Fg$ is of rank $4$. 
We will show that $\Fg$ is of type $D_{4}$. 

Recall from the proof of Theorem~\ref{thm:main2} that 
$\Fg_{D_{4}}^{\vp}$ is isomorphic to 
the simple Lie algebra of type $A_{2}$. 
Let $\Ft$ be a Cartan subalgebra of $\Fg_{D_{4}}^{\vp} \cong \Fg_{A_{2}}$. 
By the same way as in Theorem~\ref{thm:main4}, 
we can show that $\Fh=\Fa \oplus \Ft$ is a Cartan subalgebra of $\Fg$. 
Thus each root space of $\Fg$ is contained in exactly one of 
the graded spaces $\Fg_{0}$, $\Fg_{1}$, and $\Fg_{2}$. 
In particular, there exists a simple ideal $\Fu$ of $\Fg$ 
that contains $\Fg_{D_{4}}^{\vp} \cong \Fg_{A_{2}}$. 
Then we can prove in exactly the same way as in Theorem~\ref{thm:main4} 
that the level of $\Fu$ is equal to either $1$ or $3$. 
If $\Fu$ were of type $G_{2}$ (see \eqref{eq:d4}), 
then the level of it would be equal to $2$, which is a contradiction. 
Thus we conclude that $\Fg$ is of type $D_{4}$. 
This completes the proof of Theorem~\ref{thm:main5}. 
\end{proof}

%
\appendix

\section{Appendix.}
\label{sec:apdx}

In \cite[\S5.2]{M}, Miyamoto proved that 
the VOA obtained by applying Theorem~\ref{thm:M} to 
the Niemeier lattice $L=\Ni(E_{6}^{4})$ with $Q=E_{6}^{4}$ 
the root lattice and a specified automorphism $\sigma_{6}$ 
of order $3$ corresponds to No.\,32 on Schellekens' list \cite{Sch}. 
In Appendix, after reviewing the definitions of $\Ni(E_{6}^{4})$ 
and $\sigma_{6}$, we give another proof for this fact, based on 
the Dong-Lepowsky construction of twisted modules 
(see \S\ref{subsec:twisted}) and the formula \eqref{eq:level}
due to Dong and Mason. 

%
\subsection{Niemeier lattice $\Ni(E_{6}^{4})$ and 
its automorphism of order $3$.}
\label{subsec:E6}

%
Let $\bigl\{\alpha_i\mid 1\le i\le 6\bigr\}$ be 
the simple roots for the root lattice 
$E_6=\bigoplus_{i=1}^{6}\BZ\alpha_{i}$; 
\begin{center}
%
%
\unitlength 0.1in
\begin{picture}( 29.0000,  8.0000)( -0.5000, -9.1500)
%
\special{pn 8}%
\special{ar 400 800 50 50  0.0000000 6.2831853}%
%
\special{pn 8}%
\special{ar 1000 800 50 50  0.0000000 6.2831853}%
%
\special{pn 8}%
\special{ar 1600 800 50 50  0.0000000 6.2831853}%
%
\special{pn 8}%
\special{ar 2200 800 50 50  0.0000000 6.2831853}%
%
\special{pn 8}%
\special{ar 2800 800 50 50  0.0000000 6.2831853}%
%
\special{pn 8}%
\special{ar 1600 200 50 50  0.0000000 6.2831853}%
%
\special{pn 8}%
\special{pa 450 800}%
\special{pa 950 800}%
\special{fp}%
%
\special{pn 8}%
\special{pa 1050 800}%
\special{pa 1550 800}%
\special{fp}%
%
\special{pn 8}%
\special{pa 1650 800}%
\special{pa 2150 800}%
\special{fp}%
%
\special{pn 8}%
\special{pa 2250 800}%
\special{pa 2750 800}%
\special{fp}%
%
\special{pn 8}%
\special{pa 1600 750}%
\special{pa 1600 250}%
\special{fp}%
\put(4.0000,-10.0000){\makebox(0,0){$\alpha_1$}}%
\put(10.0000,-10.0000){\makebox(0,0){$\alpha_2$}}%
\put(16.0000,-10.0000){\makebox(0,0){$\alpha_3$}}%
\put(22.0000,-10.0000){\makebox(0,0){$\alpha_4$}}%
\put(28.0000,-10.0000){\makebox(0,0){$\alpha_5$}}%
\put(18.0000,-2.0000){\makebox(0,0){$\alpha_6$}}%
\end{picture}%
\end{center}
\noindent
We set $$[0]:=0,\quad [1]:=\frac{1}{3}\left(\alpha_1-\alpha_2+\alpha_4-\alpha_5\right),\quad [2]:=\frac{1}{3}\left(-\alpha_1+\alpha_2-\alpha_4+\alpha_5\right),$$
and
\begin{equation*}
\ol{[0]} := [0]+E_{6}, \qquad
\ol{[1]} := [1]+E_{6}, \qquad
\ol{[2]} := [2]+E_{6}.
\end{equation*}
Then, $E_{6}^{\ast}/E_{6}=\bigl\{\ol{[\ell]} \mid \ell=0,\,1,\,2\bigr\}$, 
and the additive group $E_{6}^{\ast}/E_{6}$ is naturally 
isomorphic to $\BZ/3\BZ$. Denote by $\Delta(E_{6})$ 
the set of roots in $E_{6}$. 
Let $\vp=\vp_{E_{6}}$ denote the lattice automorphism of $E_6$ 
defined by: $\vp=r_1r_2r_4r_5r_6r_0$, 
where $r_i$ is the simple reflection with respect to 
the simple root $\alpha_i$ for $1 \le i \le 6$, and 
$r_{0}$ is the reflection with respect to the highest root of $E_{6}$; 
note that $\vp$ is of order $3$, and 
$\vp$ acts on $E_6 \otimes \BC$ fixed-point-freely. 
Also we see that $\vp(\ol{[\ell]})=\ol{[\ell]}$ for every $\ell=0,\,1,\,2$. 

The Niemeier lattice $L=\Ni(E_{6}^{4})$ with $Q=E_{6}^{4}$ the root lattice 
is, by definition (see \cite[Chapter 16, Table 16.1]{CS}), 
the sublattice of $Q^{\ast}=(E_{6}^{\ast})^{4}$ generated by 
$Q$ and $[1(012)]=\bigl\{[1012],\,[1120],\,[1201]\bigr\}$, 
where $[a_{1} \cdots a_{4}]:=([a_{1}],\,\dots,\,[a_{4}]) \in 
Q^{\ast}=(E_{6}^{\ast})^{4}$.
Let us define $\sigma_{6}:Q^{\ast} \rightarrow Q^{\ast}$ by: 
\begin{equation*}
(\gamma_{1},\,\gamma_{2},\gamma_{3},\gamma_{4}) \mapsto 
(\vp(\gamma_{1}),\,\gamma_{4},\gamma_{2},\gamma_{3}). 
\end{equation*}
Then we deduce that $L \subset Q^{\ast}$ 
is stable under the action of $\sigma_{6}$. 
Therefore, $\sigma_{6}$ gives a lattice automorphism of $L$ of order $3$. 
Since $\rank E_{6}^{\vp}=0$, it follows immediately that 
$\rank L^{\sigma_{6}}=6$, and hence 
%
%
\begin{equation} \label{eq:rho6}
\rho=\frac{1}{18}(\dim \Fh_{(1)} + \dim \Fh_{(2)})=
\frac{1}{18}(24-6)=1. 
\end{equation}
Thus we can apply Theorem~\ref{thm:M} 
to $L=\Ni(E_{6}^{4})$ and the $\sigma_{6}$ above, and 
obtain a $C_{2}$-cofinite, holomorphic VOA 
$\ti{V}_{L}^{\sigma_{6}}$ of central charge $24$. 
The following theorem is the main result in \cite[\S5.2]{M}. 
%
%
\begin{thm} \label{thm:main6}
Keep the notation and setting above. 
For $L=\Ni(E_{6}^{4})$ and the $\sigma_{6}$ above, 
the Lie algebra $(\ti{V}_{L}^{\sigma_{6}})_{1}$ is of type $E_{6,3}G_{2,1}^{3}$. 
Therefore, $\ti{V}_{L}^{\sigma_{6}}$ corresponds to 
No.\,32 on Schellekens' list \cite[Table 1]{Sch}.  
\end{thm}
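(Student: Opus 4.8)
The plan is to imitate the arguments in the proofs of Theorems~\ref{thm:main}, \ref{thm:main4}, and \ref{thm:main5}. First I would record, exactly as in the proof of Theorem~\ref{thm:main2}, the decomposition
\begin{equation*}
(V_{L})_{1} \cong \Fg^{(1)} \oplus \Fg^{(2)} \oplus \Fg^{(3)} \oplus \Fg^{(4)}, \qquad \Fg^{(i)} \cong \Fg_{E_{6}},
\end{equation*}
and observe from the definition of $\sigma_{6}$ that it acts on $\Fg^{(1)}$ as the Lie algebra automorphism induced from $\vp=\vp_{E_{6}}$, while it cyclically permutes $\Fg^{(2)},\Fg^{(3)},\Fg^{(4)}$. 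Hence
\begin{equation*}
(V_{L}^{\sigma_{6}})_{1} \cong \Fg_{E_{6}}^{\vp} \oplus \Fg_{E_{6}},
\end{equation*}
where the second summand is the diagonal copy of $\Fg^{(2)} \oplus \Fg^{(3)} \oplus \Fg^{(4)}$. Since $\Fh_{(0)}$ is the diagonal of the components $2,3,4$, one checks that $\alpha^{(i)} \in \Delta \setminus N$ for $i=2,3,4$ (cf.\ \eqref{eq:N}), so Lemma~\ref{lem:Ysh}\,(1),(2) together with \eqref{eq:Yti} show that this diagonal $\Fg_{E_{6}}$ is a simple ideal of $(\ti{V}_{L}^{\sigma_{6}})_{1}$, whose level equals $3$ by the same computation as for $\Fg^{(012)}$ in the proof of Theorem~\ref{thm:main}. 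As the dual Coxeter number of $E_{6}$ is $12$, formula \eqref{eq:level} then forces $\dim(\ti{V}_{L}^{\sigma_{6}})_{1} = 120$; in particular, by Proposition~\ref{prop:DM} this Lie algebra is semisimple.

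Next I would isolate the remaining ideal. A trace computation (using that $\vp$ acts fixed-point-freely of order $3$ on $E_{6}\otimes\BC$) gives $\dim \Fg_{E_{6}}^{\vp} = 24$, and by \cite{Kac} this fixed point subalgebra is isomorphic to $\Fg_{A_{2}}^{\oplus 3}$. Setting
\begin{equation*}
\Fg := \underbrace{\Fg_{E_{6}}^{\vp}}_{=:\Fg_{0}} \oplus \underbrace{V(\sigma_{6})_{1}}_{=:\Fg_{1}} \oplus \underbrace{V(\sigma_{6}^{2})_{1}}_{=:\Fg_{2}},
\end{equation*}
Remark~\ref{rem:Z3} and the argument above show that $\Fg$ is the ideal of $(\ti{V}_{L}^{\sigma_{6}})_{1}$ complementary to the diagonal $E_{6,3}$; it is a $(\BZ/3\BZ)$-graded semisimple Lie algebra of dimension $120-78=42$, and by \eqref{eq:level} (which forces $h^{\vee}_{\Fs}/k_{\Fs}=4$ for every simple component $\Fs$) the dual Coxeter number of each simple component lies in $4\BZ$. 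Inspecting the simple Lie algebras whose dual Coxeter number is divisible by $4$, I find that the only types of dimension $42$ are $G_{2}^{3}$ and $C_{3}^{2}$.

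Finally I would rule out $C_{3}^{2}$ exactly as in the proofs of Theorems~\ref{thm:main4} and \ref{thm:main5}. Fix one $\Fg_{A_{2}}$-factor of $\Fg_{0}=\Fg_{E_{6}}^{\vp}$; its root vectors are $\vp$-orbit sums $1\otimes e^{\alpha^{(1)}}+1\otimes e^{\vp(\alpha)^{(1)}}+1\otimes e^{\vp^{2}(\alpha)^{(1)}}$, so by the computation recalled in the proof of Theorem~\ref{thm:main4} the central term carries the coefficient $3$ when the $A_{2}$-roots are normalized to squared length $2$. This factor lies in a simple ideal $\Fu$ of $\Fg$, and if $r\in\{1,2,3\}$ is the ratio of the squared length of a long root of $\Fu$ to that of an $A_{2}$-root, then the level of $\Fu$ equals $3/r$ and is therefore an integer in $\{1,3\}$. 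If $\Fu$ were of type $C_{3}$, its only $A_{2}$-subsystem would consist of short roots, giving $r=2$ and level $3/2\notin\BZ$; this contradiction eliminates $C_{3}^{2}$. Hence $\Fg$ is of type $G_{2}^{3}$, and since $h^{\vee}_{G_{2}}=4$ and $h^{\vee}_{\Fs}/k_{\Fs}=4$, each $G_{2}$-factor has level $1$. Combining with the diagonal $E_{6,3}$, I conclude that $(\ti{V}_{L}^{\sigma_{6}})_{1}$ is of type $E_{6,3}G_{2,1}^{3}$.

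The hard part will be this last step: because both candidate types $G_{2}^{3}$ and $C_{3}^{2}$ have rank $6=\rank\Fg_{0}$, the rank argument that finished Theorem~\ref{thm:main3} does not apply here, so the decisive input is the factor $3$ in the central term of the $A_{2}$-factors of $\Fg_{E_{6}}^{\vp}$ (equivalently, the identification $\Fg_{E_{6}}^{\vp}\cong\Fg_{A_{2}}^{\oplus 3}$), which turns the non-integrality of the putative $C_{3}$-level into the crux of the elimination.
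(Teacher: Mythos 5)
Your argument coincides with the paper's proof up to the dichotomy: the identification of the diagonal $\Fg^{(234)}\cong E_{6,3}$ as a simple ideal, the count $\dim(\ti{V}_{L}^{\sigma_{6}})_{1}=120$, and the reduction of $\Fg=\Fg_{E_{6}}^{\vp}\oplus\Fg_{1}\oplus\Fg_{2}$ (dimension $42$, dual Coxeter numbers in $4\BZ$) to type $G_{2}^{3}$ or $C_{3}^{2}$ are exactly the paper's steps. You diverge only in eliminating $C_{3}^{2}$. The paper does this more cheaply than you expect: since both candidates have rank $6=\rank\Fg_{E_{6}}^{\vp}$, a Cartan subalgebra of $\Fg_{E_{6}}^{\vp}\cong\Fg_{A_{2}}^{\oplus 3}$ is a Cartan subalgebra of $\Fg$, so the root system of $\Fg$ contains a subsystem of type $A_{2}^{3}$; but three mutually orthogonal $A_{2}$'s cannot sit inside $C_{3}^{2}$ (each connected $A_{2}$ lies in a single irreducible factor, and two orthogonal $A_{2}$'s would span a $4$-dimensional subspace of a rank-$3$ factor). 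So the ``rank argument'' you dismiss is in fact what the paper refines and uses; no normalization of bilinear forms is needed.

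Your transplanted level argument rests on an incorrect numerical claim: the $A_{2}$-factors of $\Fg_{E_{6}}^{\vp}$ do \emph{not} have level $3$. The automorphism $\vp_{E_{6}}=r_{1}r_{2}r_{4}r_{5}r_{6}r_{0}$ is a product of reflections, hence a Weyl group element, and the induced automorphism of $\Fg_{E_{6}}$ is \emph{inner}; its fixed subalgebra is therefore a regular reductive subalgebra containing a Cartan subalgebra of $\Fg_{E_{6}}$, namely a regular $A_{2}^{3}$ whose roots are genuine $E_{6}$-roots of squared length $2$, of Dynkin index $1$ and hence of level $1$. This is quite unlike $\vp_{D_{4}}$ in Theorems~\ref{thm:main2}, \ref{thm:main4}, and~\ref{thm:main5}, which is a triality-type (outer) automorphism, fixed-point-free on the Cartan, whose fixed $A_{2}$ really does have index $3$ --- that is the source of the coefficient $3$ you are importing. (Relatedly, the orbit sums you exhibit are not root vectors for the $A_{2}^{3}$-structure: since $\Fh_{E_{6}}^{\vp}=\{0\}$, the Cartan subalgebra of $\Fg_{E_{6}}^{\vp}$ is itself made of such sums.) Your argument happens to survive the correction --- the level of a simple ideal $\Fu$ containing an $A_{2}$-factor becomes $1/r$, forcing $r=1$, while any $A_{2}$-subsystem of $C_{3}$ consists of short roots and gives $r=2$ and level $1/2\notin\BZ$ --- so the conclusion is right, but as written the decisive coefficient is wrong and its justification by analogy with Theorem~\ref{thm:main4} does not apply. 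Either redo the level computation for this inner case or use the paper's root-subsystem containment.
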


\begin{proof} 
We deduce in exactly the same way as Lemma~\ref{lem:fixed} 
that $(V_{L}^{\sigma_{6}})_{1}=\Fg_{E_{6}}^{\vp} \oplus \Fg^{(234)}$, 
where $\Fg^{(234)}$ is isomorphic to the simple Lie algebra $\Fg_{E_{6}}$ of type $E_6$,
and $\Fg_{E_{6}}^{\vp}$ is the fixed point subalgebra of $\Fg_{E_{6}}$ 
under the Lie algebra automorphism induced from 
$\vp \in \Aut(E_{6})$; we deduce that $\Fg_{E_{6}}^{\vp}$ is 
isomorphic to the semisimple Lie algebra $\Fg_{A_{2}}^{\oplus 3}$.
%
%
%
Furthermore, we see by Lemma~\ref{lem:Ysh} that $\Fg^{(234)}$ is a simple ideal of 
the (whole) Lie algebra $(\ti{V}_{L}^{\sigma_{6}})_{1}$ 
whose level is equal to $3$. 
Since the dual Coxeter number of $E_{6}$ is equal to $12$, 
\begin{equation*}
\frac{12}{3}=\frac{\dim (\ti{V}_{L}^{\sigma_{6}})_{1} - 24}{24}, \quad 
\text{and hence} \quad \dim (\ti{V}_{L}^{\sigma_{6}})_{1} = 120.
\end{equation*}

Set
\begin{equation*}
\Fg:=\Fg_{0} \oplus \Fg_{1} \oplus \Fg_{2} \quad 
\text{with} \quad \Fg_{0}:=\Fg_{E_{6}}^{\vp}, \ 
\Fg_{1}:=V(\sigma_{6})_{1}, \ \Fg_{2}:=V(\sigma_{6}^{2})_{1}.
\end{equation*}
Then we see from the argument above, 
along with Proposition~\ref{prop:DM} and \eqref{eq:level}, 
that $\Fg$ is an ideal of $(\ti{V}_{L}^{\sigma_{6}})_{1}$ 
satisfying the following conditions: 

(i) $\Fg$ is a $(\BZ/3\BZ)$-graded, semisimple Lie algebra of dimension $42$; 

(ii) the dual Coxeter number of a simple component of $\Fg$ is contained in $4\BZ$. 

\noindent
Thus, $\Fg$ is isomorphic to the semisimple Lie algebra 
of type $G_2^3$ or $C_3^2$; in particular, 
the rank of $\Fg$ is equal to $6$. 
Hence the Cartan subalgebra of $\Fg_{E_{6}}^{\vp} \cong 
\Fg_{A_{2}}^{\oplus 3}$ is also a Cartan subalgebra of $\Fg$. 
Thus the root system of $\Fg$ contains the root system of $A_2^3$, 
which implies that $\Fg$ is of type $G_2^3$. 
Thus we have proved the proposition. 
\end{proof}


{\small
\setlength{\baselineskip}{13pt}
\renewcommand{\refname}{References}

}

\end{document}